\documentclass{article}

\usepackage{amsmath,amssymb,amsthm,tikz,youngtab,multicol,xcolor,setspace}
\usetikzlibrary{arrows,decorations.pathreplacing,decorations.markings,patterns}
\usepackage{empheq,bbm}
\usepackage{subcaption,float}

\newcommand{\Hom}{\text{Hom}}

\newcommand{\1}{\textbf{1}}
\DeclareMathOperator{\Rep}{Rep}
\DeclareMathOperator{\Vep}{Vec}

\newcommand*{\x}{0.40825}
\newcommand*{\y}{0.5*1.4142}

\theoremstyle{plain}
\newtheorem{theo}{Theorem}
\newtheorem{mytheorem}{Proposition}[subsection]
\newtheorem{lem}[mytheorem]{Lemma}
\newtheorem{cor}[mytheorem]{Corollary}
\theoremstyle{definition}
\newtheorem{example}[mytheorem]{Example}
\newtheorem*{note}{Note}
\newtheorem{defi}[mytheorem]{Definition}

\title{Level bounds for exceptional \\ quantum subgroups in rank two}
\date{\today}
\author{Andrew Schopieray\\ Mathematics Department, University of Oregon}

\begin{document}
\parskip = \baselineskip
\setlength{\parindent}{0cm}

\maketitle

\begin{abstract}
\noindent There is a long-standing belief that the modular tensor categories $\mathcal{C}(\mathfrak{g},k)$, for $k\in\mathbb{Z}_{\geq1}$ and finite-dimensional simple complex Lie algebras $\mathfrak{g}$, contain exceptional connected \'etale algebras at only finitely many levels $k$.  This premise has known implications for the study of relations in the Witt group of nondegenerate braided fusion categories, modular invariants of conformal field theories, and the classification of subfactors in the theory of von Neumann algebras.  Here we confirm this conjecture when $\mathfrak{g}$ has rank 2, contributing proofs and explicit bounds when $\mathfrak{g}$ is of type $B_2$ or $G_2$, adding to the previously known positive results for types $A_1$ and $A_2$.
\end{abstract}

\usetikzlibrary{calc}

%%%%%%%%%%%%%%%%%%%%%%%%%%%%%%%%%%%%%%%%%%%%%%%%%%%%%%%%%%%%%%%%%%%%%%%%%%%%%%%%%%%%%%%%%%%%%%%%%%%

%%%%%%%%%%%%%%%%%%%%%%%%%%%%%%%%%%%%%%%%%%%%%%%%%%%%%%%%%%%%%%%%%%%%%%%%%%%%%%%%%%%%%%%%%%%%%%%%%%%

%%%%%%%%%%%%%%%%%%%%%%%%%%%%%%%%%%%%%%%%%%%%%%%%%%%%%%%%%%%%%%%%%%%%%%%%%%%%%%%%%%%%%%%%%%%%%%%%%%%

\section{Introduction}

\par The moniker \emph{quantum subgroup} has been attached to numerous related concepts; for the purposes of this exposition a quantum subgroup will describe a connected \'etale algebra in a modular tensor category.  Some of the most well-known examples of modular tensor categories arise from the representation theory of quantum groups at roots of unity \cite[Chapter 3]{BaKi} and are of the form $\mathcal{C}(\mathfrak{g},k)$ for some \emph{level} $k\in\mathbb{Z}_{\geq1}$, where $\mathfrak{g}$ is a finite-dimensional complex simple Lie algebra. Theorem \ref{thebigone} states that aside from a predictable infinite family, there are finitely many levels for which a nontrivial connected \'etale algebra can exist in $\mathcal{C}(\mathfrak{g},k)$ for Lie algebras $\mathfrak{g}$ of type $B_2$ and $G_2$, while a complete classification for type $A_2$ due to Gannon is available in the literature under the guise of modular invariants \cite{gannon}.  One should refer to Section \ref{outline} for a statement of Theorem \ref{thebigone} and an outline of the main proof technique.  An explicit level-bound is provided in the subsequent sections, which optimistically allows for a complete classification of connected \'etale algebras in $\mathcal{C}(\mathfrak{g}_2,k)$ and $\mathcal{C}(\mathfrak{so}_5,k)$ by strictly computational methods.

\par One application of Theorem \ref{thebigone} is to partially classify \emph{module categories} over fusion categories $\mathcal{C}:=\mathcal{C}(\mathfrak{g},k)$ where $\mathfrak{g}$ is of type $B_2$ or $G_2$ \cite[Chapter 7]{tcat}.  Each connected \'etale algebra $A\in\mathcal{C}$ gives rise to an indecomposable module category over $\mathcal{C}$ by considering the category of $A$-modules in $\mathcal{C}$, although not all indecomposable module categories can be produced in this way.  For example if $\mathcal{C}$ is a \emph{pointed} modular tensor category \cite[Chapter 8.4]{tcat} with the set of isomorphism classes of simple objects of $\mathcal{C}$ forming a finite abelian group $G$, then indecomposable module categories over $\mathcal{C}$ correspond to subgroups of $G$ along with additional cohomological data \cite[Theorem 3.1]{Ostrikdouble}; this example provides some precedence to title connected \'etale algebras as \emph{quantum subgroups}.  For a non-modular example, module categories over the even parts of the Haagerup subfactors have been classified by Grossman and Snyder \cite{pinhasnoah}.  More classically, module categories over $\mathcal{C}(\mathfrak{sl}_2,k)$ are classified by simply-laced Dynkin diagrams \cite{zuber,KiO} but this characterization scheme has not immediately lent itself to classifying module categories over $\mathcal{C}(\mathfrak{g},k)$ for other simple Lie algebras $\mathfrak{g}$.  The language and tools of tensor categories which have solidified in recent years provide a novel approach to this aging problem.

\par Another reason for seeking a classification of connected \'etale algebras is to find relations in the Witt group of nondegenerate braided fusion categories \cite{DMNO}.   Traditional attempts to classify nondegenerate braided fusion categories include proceeding by rank or by global dimension.  Organizing nondegenerate braided fusion categories by Witt equivalence class offers a powerful albeit indirect approach.  Each Witt equivalence class contains a unique \emph{completely anisotropic} representative $\mathcal{C}$ which is constructed by identifying a maximal connected \'etale algebra $A$ and passing to its category of dyslectic $A$-modules $\mathcal{C}_A^0$.  The only relations that have been completely described are those coming from the subgroup generated by the Witt equivalence classes of pointed categories \cite[Appendix A.7]{DGNO}, $\mathcal{C}(\mathfrak{sl}_2,k)$ for $k\in\mathbb{Z}_{\geq1}$ \cite[Section 5.5]{DNO}, and $\mathcal{C}(\mathfrak{sl}_3,k)$ for $k\in\mathbb{Z}_{\geq1}$ \cite{schopieray2017}.  Classifying connected \'etale algebras in other classes of braided fusion categories is the first step on the path to extending these results.  Witt group relations have also found applications to extensions of vertex operator algebras \cite{huang2015} and anyon condensation \cite{fuchs2013,sebas,kong} in the realm of mathematical physics.

\par Modular tensor categories also encode the data of chiral conformal field theories.  Fuchs, Runkel, and Schweigert \cite{rcft} describe how \emph{full} conformal field theories correspond to the identification of certain commutative algebras in these categories.  These concepts have been recently formalized to \emph{logarithmic} conformal field theories \cite{fuchs2017}, i.e. theories described by non-semisimple analogs of modular tensor categories.  One should also refer to the work of B\"ockenhauer, Evans, and Kawahigashi \cite{kawahigashi1999,kawahigashi2000} which describes this connection in terms of modular invariants and subfactor theory, or Ostrik's  summary of these results in categorical terms \cite[Section 5]{Ostrik2003}.

\par The structure of the paper is as follows: Section \ref{notions} contains the general categorical notions and results needed in the remainder of the exposition, and defines the categories $\mathcal{C}(\mathfrak{g},k)$ when $\mathfrak{g}$ is of rank 2, ending with a geometric discussion of rank 2 fusion rules.  Section \ref{machinery} describes the technical machinery needed to prove Theorem \ref{thebigone} and gives an outline of the main proof as illustrated using $\mathcal{C}(\mathfrak{sl}_2,k)$.  Sections \ref{a2}--\ref{g2} contain the main content of the proof of Theorem \ref{thebigone} for $\mathfrak{sl}_3$, $\mathfrak{so}_5$, and $\mathfrak{g}_2$ respectively, with concluding remarks in Section \ref{conclusion}.

%%%%%%%%%%%%%%%%%%%%%%%%%%%%%%%%%%%%%%%%%%%%%%%%%%%%%%%%%%%%%%%%%%%%%%%%%%%%%%%%%%%%%%%%%%%%%%%%%%%

%%%%%%%%%%%%%%%%%%%%%%%%%%%%%%%%%%%%%%%%%%%%%%%%%%%%%%%%%%%%%%%%%%%%%%%%%%%%%%%%%%%%%%%%%%%%%%%%%%%

%%%%%%%%%%%%%%%%%%%%%%%%%%%%%%%%%%%%%%%%%%%%%%%%%%%%%%%%%%%%%%%%%%%%%%%%%%%%%%%%%%%%%%%%%%%%%%%%%%%

\section{Categorical notions and definitions}\label{notions}

%%%%%%%%%%%%%%%%%%%%%%%%%

\subsection{Modular tensor categories}\label{sec:defs}

\par All definitions and notations in what follows are standard, and we refer the reader to \cite{tcat} for details as needed.  Our base field will be $\mathbb{C}$, though most of the concepts of this section can be understood over an algebraically closed field of arbitrary characteristic.  

\par \emph{Fusion categories} are $\mathbb{C}$-linear semisimple rigid tensor categories with finitely many isomorphism classes of simple objects, finite dimensional spaces of morphisms and a simple unit object $\textbf{1}$ \cite[Definition 4.1.1]{tcat}.  Fusion categories are abundant in modern mathematics with the most elementary family of examples being $\Rep(G)$, the finite-dimensional complex representations of a finite group $G$.  If $G$ is trivial then this construction produces the trivial fusion category $\Vep$ of finite-dimensional complex vector spaces.

\par For each fusion category $\mathcal{C}$ there exists a unique ring homomorphism $\text{FPdim}:K(\mathcal{C})\to\mathbb{R}$ such that $\text{FPdim}(X)>0$ for any $0\neq X\in\mathcal{C}$, where $K(\mathcal{C})$ is the Grothendieck group of $\mathcal{C}$ given a ring structure via the tensor product of $\mathcal{C}$ \cite[Chapter 4.5]{tcat}.  This \emph{Frobenius-Perron} dimension agrees with the notion of the \emph{categorical} or \emph{quantum} dimension of an object $X\in\mathcal{C}$ in the case $\mathcal{C}$ is a pseudo-unitary fusion category with a properly chosen spherical structure \cite[Definition 9.4.4]{tcat}.  All of the categories considered in the proof of the main theorem will be pseudo-unitary so the dimension of an object $X\in\mathcal{C}$ will simply be denoted $\dim(X)$ in Section \ref{lie} and beyond.

\par A \emph{braiding} on a fusion category $\mathcal{C}$ is a family of natural isomorphisms $c_{X,Y}:X\otimes Y\stackrel{\sim}{\to} Y\otimes X$ for all $X,Y\in\mathcal{C}$ satisfying braid relations \cite[Definition 8.1.1]{tcat}.  Spherical braided fusion categories will be called \emph{pre-modular}.  \emph{Pre}-modular categories may possess simple objects for which the braiding is trivial (with all other objects) making the braiding degenerate in this sense;  specifically, an object $X$ in a braided fusion category is called \emph{transparent} (or central) if $c_{Y,X}\circ c_{X,Y}=\text{id}_{X\otimes Y}$ for all $Y\in\mathcal{C}$.  Modular categories can be seen as those braided fusion categories whose braidings are entirely non-degenerate, or furthest from symmetric as possible.  

\vspace{2 mm}

\begin{defi}
A pre-modular category $\mathcal{C}$ is a \emph{modular tensor category} if $\1\in\mathcal{C}$ is the only simple transparent object.
\end{defi}

\par Lastly we recall that each pre-modular category has a family of natural isomorphisms $\theta(X):X\stackrel{\sim}{\to}X$ for all $X\in\mathcal{C}$ known as \emph{twists} (or ribbon structure), compatible with the inherent braiding isomorphisms \cite[Definition 8.10.1]{tcat}.  We will abuse this notation when it suits our purposes by denoting the complex number $\alpha$ such that $\theta(X)=\alpha\cdot\text{id}_X$ simply as $\theta(X)$ for any simple $X\in\mathcal{C}$.

%%%%%%%%%%%%%%%%%%%%%%%%%

\subsection{\'Etale algebras}

\par An \emph{algebra} in a fusion category $\mathcal{C}$ is an object $A\in\mathcal{C}$ with multiplication morphism $m:A\otimes A\to A$ and unit morphism $u:\textbf{1}\to A$ satisfying the usual compatibility relations \cite[Definition 7.8.1]{tcat}.  Associativity maps are an inherent structure of the fusion category so in the case $\mathcal{C}=\Vep$, the above definition is equivalent to that of an associative and unital finite-dimensional $\mathbb{C}$-algebra.

\begin{defi}
An algebra $A$ in fusion category $\mathcal{C}$ is \emph{separable} if the multiplication morphism splits as a map of $A$-bimodules and \emph{connected \'etale} if $\Hom_\mathcal{C}(\1,A)=1$ (connected) and $A$ is commutative and separable (\'etale).
\end{defi}
As described in \cite[Section 3]{DMNO}, the condition that an algebra $A$ is separable is equivalent to the category of right $A$-modules $\mathcal{C}_A$ being semisimple.  Furthermore $A$ connected \'etale implies $C_A$ is a fusion category and moreover if $\mathcal{C}$ is braided then $\mathcal{C}_A^0$, the category of \emph{dyslectic} right $A$-modules is braided.

\begin{example}\label{twotwotwo}
A fusion category $\mathcal{C}$ has at least one connected \'etale algebra: the unit object $\1$ whose multiplication $\1\otimes\1\to\1$ and unit maps $\1\to\1$ are part of the monoidal data of $\mathcal{C}$.  Isomorphism classes of simple $A$-modules are then in one-to-one correspondence with isomorphism classes of simple objects of $\mathcal{C}$ with the unit morphisms of $\mathcal{C}$ acting as the $A$-module structure maps.  Less trivially, the algebra of complex functions on a finite group $G$ has a structure of a connected \'etale algebra in $\Rep(G)$ by which $G$ acts on complex functions by right translations.  Refer to \cite[Example 7.8.3]{tcat} for additional nontrivial examples.
\end{example}

\par The numerical conditions for an algebra in a pseudo-unitary pre-modular category to be connected \'etale are quite restrictive.  In particular the full twist on such an algebra is trivial as we will prove below.  This result is due to Victor Ostrik, although a proof does not appear in the literature to our knowledge.  The full twist need not be trivial if the assumption of pseudo-unitary is removed as the following example illustrates.

\begin{example}\label{svep}
The fusion category of complex $\mathbb{Z}/2\mathbb{Z}$-graded vector spaces has two possible (symmetric) pre-modular structures, distinguished by the full twist on the non-trivial simple object $\theta(X)=\pm1$.  The trivial twist corresponds to the pseudo-unitary category Rep$(\mathbb{Z}/2\mathbb{Z})$, while the nontrivial twist corresponds to s$\Vep$, the category of complex super vector spaces \cite[Example 8.2.2]{tcat}.  The object $A:=\textbf{1}\oplus X$ has a unique structure of a connected \'etale algebra in both cases, but $\theta(A)\neq\text{id}_A$ in s$\Vep$, which is \emph{not} pseudo-unitary (i.e. $\dim(X)=-1$).
\end{example}

\par The main concept behind the proof of Lemma \ref{twistisone} is to reduce the argument to the cases in Example \ref{svep}.

\begin{lem}\label{twistisone}
If $\mathcal{C}$ is a pseudo-unitary braided fusion category and $A$ is a connected \'etale algebra in $\mathcal{C}$, then $\theta(A)=\text{id}_A$.
\end{lem}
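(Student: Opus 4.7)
Following the hint in the text, the strategy is to reduce to the rank-two symmetric situation of Example \ref{svep}. The argument proceeds in three stages: first show that $\theta(A)$ is an algebra automorphism of $A$, then restrict attention to a simple summand of $A$ on which the twist acts nontrivially, and finally invoke pseudo-unitarity to derive a contradiction.

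For the first stage, I would verify directly that $\theta(A)\colon A \to A$ is an algebra automorphism. Naturality of the twist with respect to $m$ gives $\theta(A)\circ m = m\circ \theta(A\otimes A)$, the ribbon identity yields $\theta(A\otimes A) = (\theta(A)\otimes \theta(A))\circ c_{A,A}^2$, and commutativity $m\circ c_{A,A} = m$ lets one cancel the double braiding after composition with $m$. Naturality on the unit $u\colon \1 \to A$ combined with $\theta(\1) = \text{id}_\1$ handles unit compatibility. Altogether, $\theta(A)\circ m = m\circ(\theta(A)\otimes \theta(A))$, so $\theta(A)$ is an algebra automorphism.

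Decomposing $A\cong \bigoplus_i n_i X_i$ into simple constituents, the automorphism $\theta(A)$ acts as the scalar $\theta_{X_i}$ on the $X_i$-isotypic part, and the lemma reduces to showing $\theta_{X_i} = 1$ whenever $n_i > 0$. Fixing a simple $X\subset A$ with $\theta_X \neq 1$, I would aim to extract from $A$ a connected \'etale sub- or quotient algebra whose underlying object is $\1 \oplus Y$ for some simple $Y$ with $\theta_Y\neq 1$. For any rank-two commutative algebra of this form, comparing the ribbon identity on $Y\otimes Y$ to the multiplication-induced pairing $Y\otimes Y \to \1$ forces $Y$ self-dual and $\theta_Y\in\{\pm 1\}$, placing us precisely in the setting of Example \ref{svep}. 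The $\theta_Y = -1$ alternative corresponds to s$\Vep$ with $\dim(Y) = -1$, contradicting pseudo-unitarity.

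The main obstacle lies in the middle stage: extracting the rank-two \'etale sub- or quotient algebra isolating the offending simple summand. \'Etale subobjects of an \'etale algebra are not automatic, and one will likely need the Frobenius structure of $A$ (from the pairing $A\otimes A \to A \to \1$ using the dual of the unit) together with semisimplicity of $\mathcal{C}_A$ to cut down to a two-dimensional piece carrying the non-trivial twist. Once this reduction is in place, the positivity of categorical dimensions in a pseudo-unitary category closes the argument exactly as Example \ref{svep} anticipates.
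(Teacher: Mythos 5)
Your opening steps agree with the paper: the non-degeneracy of the pairing $\varphi\colon A\otimes A\stackrel{m}{\to}A\stackrel{\varepsilon_A}{\to}\mathbf{1}$ together with commutativity of $A$ and the balancing axiom do force $\theta(X)=\pm1$ on every simple summand $X\subset A$, and the goal of landing in the setting of Example~\ref{svep} is exactly right. You have, however, correctly diagnosed the gap in your own argument and left it open: you write that the ``main obstacle'' is extracting a connected \'etale sub- or quotient algebra of $A$ of the form $\mathbf{1}\oplus Y$ inside $\mathcal{C}$, and you do not supply a mechanism for doing so. This is not a cosmetic gap --- there is in general no reason for such a rank-two sub- or quotient algebra to exist inside $\mathcal{C}$, and the Frobenius structure of $A$ by itself will not produce one.

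The paper's resolution is more indirect, and is the part your proposal is missing. Rather than cutting $A$ down to a two-summand piece in $\mathcal{C}$, one decomposes $A=A^+\oplus A^-$ by the sign of the twist, proves that $A^+$ itself is a connected \'etale algebra (using that the restricted pairing is still non-degenerate and \cite[Theorem 3.3]{KiO}), and then changes the ambient category: set $\mathcal{D}:=\mathcal{C}_{A^+}^0$. Now $A$ is a commutative algebra over $A^+$, and \cite[Proposition 3.16]{DMNO} promotes it to a connected \'etale algebra in $\mathcal{D}$, whose unit object is $A^+$. A dimension computation using \cite[Corollary 3.32]{DMNO} and \cite[Proposition 8.23]{ENO} shows $\mathcal{D}$ is again pseudo-unitary. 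In $\mathcal{D}$, any simple summand $X$ of $A$ with $X\neq A^+$ satisfies $X\otimes_{\mathcal{D}}X=\mathbf{1}_\mathcal{D}$ by the $\mathbb{Z}/2\mathbb{Z}$-grading, so $X$ generates a fusion subcategory of $\mathcal{D}$ equivalent to $\mathbb{Z}/2\mathbb{Z}$-graded vector spaces with $\theta(X)=-1$, i.e.\ $\mathrm{s}\Vep$, contradicting pseudo-unitarity of $\mathcal{D}$. So the ``rank-two picture'' you want is achieved not by finding a subalgebra of $A$ in $\mathcal{C}$, but by replacing $\mathcal{C}$ with $\mathcal{D}$ so that $A$ itself becomes the object $\mathbf{1}_\mathcal{D}\oplus(\text{odd part})$. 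To complete your proof you would need to supply this change-of-category step or an equivalent device; as written, the argument does not go through.
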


\begin{proof}
The composition $\varphi:A\otimes A\stackrel{m}{\to}A\stackrel{\varepsilon_A}{\to}\textbf{1}$ is non-degenerate \cite[Remark 3.4]{DMNO}, where $\varepsilon_A$ arises from $A$ being connected (and is unique up to scalar multiple).  Note that the commutativity of $A$ implies $\varphi s_{X^\ast,X}s_{X,X^\ast}=\varphi$.  We can then rewrite $s_{X^\ast,X}s_{X,X^\ast}$ using the balancing axiom \cite[Equation 2.2.8]{BaKi} to yield $\theta(X)\theta(X^\ast)\theta(\textbf{1})^{-1}=1$ because $\varphi$ is nondegenerate.  Moreover $\theta(X)=\pm1$.  So we may now decompose $A=A^+\oplus A^-$ where $A^{\pm}$ is the sum of simple summands of $A$ with twist $\pm1$, respectively.  We will deduce that $A^-$ is empty in the remainder of the proof.

\par The commutativity of $A=A^+\oplus A^-$ implies this decomposition is a $\mathbb{Z}/2\mathbb{Z}$-grading again by the balancing axiom, i.e. $\theta(X\otimes Y)=\theta(X)\theta(Y)$ for all simple $X,Y\subset A$.  Thus $m$ restricts to a multiplication morphism $A^+\otimes A^+\to A^+$.  We now aim to prove that $A^+$ is a connected \'etale algebra.  The commutativity of $A^+$ is clear from the commutativity of $A$ and $\theta(\textbf{1})=1$ by the balancing axiom \cite[Equation 2.2.9]{BaKi} so $A^+$ is connected.  It remains to show that $A^+$ is separable, i.e. $\mathcal{C}_{A^+}$ is semisimple.  This follows from \cite[Theorem 3.3]{KiO} by recalling that $A^+$ is \emph{rigid} (in the sense of Kirillov and Ostrik) because $A^+\otimes A^+\stackrel{m}{\to}A^+\stackrel{\varepsilon_A}{\to}\textbf{1}$ is non-degenerate, and $\dim(A)\neq0$ since $\mathcal{C}$ is pseudo-unitary.

\par In the language of \cite[Section 3.6]{DMNO}, $A$ with the inclusion $A^+\to A$ is known as a commutative algebra \emph{over} $A^+$ and thus $A$ can be considered as a commutative algebra in $\mathcal{D}:=\mathcal{C}_{A^+}^0$.  Proposition 3.16 of \cite{DMNO} then implies $A$ (as an algebra in $\mathcal{D}$) is connected \'etale as well.  We also note that $\theta(A^+)=\text{id}_{A^+}$ along with Theorem 1.18 of \cite{KiO}, implies $\dim(\mathcal{D})=\sum_{X\in\mathcal{O}(\mathcal{D})}\dim_\mathcal{D}(X)^2$ is equal to
\begin{align}
\sum_{X\in\mathcal{O}(\mathcal{D})}\left(\dfrac{\dim_\mathcal{C}(X)}{\dim_\mathcal{C}(A)}\right)^2&=\text{FPdim}_\mathcal{C}(A)^{-2}\sum_{X\in\mathcal{O}(\mathcal{D})}\text{FPdim}_\mathcal{C}(X)^2 \label{twistone}\\
&=\text{FPdim}_\mathcal{C}(A)^{-2}\text{FPdim}(\mathcal{C}) \nonumber\\
&=\text{FPdim}(\mathcal{D}).\label{twisttwo}
\end{align}
where (\ref{twistone}) follows from $\mathcal{C}$ being pseudo-unitary and \cite[Corollary 3.32]{DMNO} implies (\ref{twisttwo}).  Moreover we have shown $\mathcal{D}$ is pseudo-unitary by Proposition 8.23 of \cite{ENO}.

\par Now assume $X\subset A$ is a simple summand of $A$ (as an object of $\mathcal{D}$) which is \emph{distinct} from $A^+=\textbf{1}_\mathcal{D}$.  An immediate consequence is that $X\subset A^-$, should such an object exist.  But $X\otimes_\mathcal{D}X$ is a quotient of $X\otimes X$ \cite[Theorem 1.5]{KiO} which, by the $\mathbb{Z}/2\mathbb{Z}$-grading of $A$, implies $X\otimes_\mathcal{D}X\subset A^+=\textbf{1}_\mathcal{D}$.  The simplicity of the unit object $\textbf{1}_\mathcal{D}=A^+$ then implies $X\otimes_\mathcal{D}X=1_\mathcal{D}$.

\par Lastly we consider the fusion subcategory $\mathcal{E}\subset\mathcal{D}$ generated by $X$, which by the above reasoning is equivalent to the category of $\mathbb{Z}/2\mathbb{Z}$-graded vector spaces (as a fusion category).  The spherical structure of $\mathcal{E}$ which is inherited from $\mathcal{D}$, must be the nontrivial one since $\theta(X)=-1$ (see Example \ref{svep}) and thus $\mathcal{E}=\text{s}\Vep$, a contradiction to $\mathcal{D}$ being pseudo-unitary.  Moreover no such $X$ can exist and $A=A^+$.
\end{proof}

%%%%%%%%%%%%%%%%%%%%%%%%%

\subsection{The categories $\mathcal{C}(\mathfrak{g},k)$}\label{lie}

\par Here we organize the requisite numerical data of our modular tensor categories of interest, the categories $\mathcal{C}(\mathfrak{g},k)$ where $\mathfrak{g}$ is, for instructive purposes, $\mathfrak{sl}_2$ or $\mathfrak{sl}_3$, and, for the main result of this paper, $\mathfrak{so}_5$ or $\mathfrak{g}_2$.

\par If $\mathfrak{g}$ is a finite-dimensional simple Lie algebra and $\hat{\mathfrak{g}}$ is the corresponding affine Lie algebra, then for all $k\in\mathbb{Z}_{>0}$ one can associate a pseudo-unitary modular tensor category $\mathcal{C}(\mathfrak{g},k)$ consisting of highest weight integrable $\hat{\mathfrak{g}}$-modules of level $k$.  These categories were studied by Andersen and Paradowski \cite{ap} and Finkelberg \cite{fink} later proved that $\mathcal{C}(\mathfrak{g},k)$ is equivalent to the semisimple portion of the representation category of Lusztig's quantum group $\mathcal{U}_q(\mathfrak{g})$ when $q=e^{\pi i/(k+h^\vee)}$ (Figure \ref{fig:q}) where $h^\vee$ is the dual coxeter number for $\mathfrak{g}$ \cite[Chapter 7]{BaKi}.
\begin{figure}[H]
\centering
\begin{equation*}
\begin{array}{|c|c|}
\hline  \mathfrak{g}  & q\\ \hline
\mathfrak{sl}_2 & \exp(\pi i/(k+2))  \\
\mathfrak{sl}_3 & \exp(\pi i/(k+3)) \\
\mathfrak{so}_5 & \exp((1/2)\pi i/(k+3)) \\
\mathfrak{g}_2 & \exp((1/3)\pi i/(k+4)) \\\hline
\end{array}
\end{equation*}
    \caption{Roots of unity $q$}%
    \label{fig:q}%
\end{figure}
\par Let $\mathfrak{h}$ be a Cartan subalgebra of $\mathfrak{g}$ and $\langle.\,,.\rangle$ be the invariant form on $\mathfrak{h}^\ast$ normalized so that $\langle\alpha,\alpha\rangle=2$ for short roots \cite[Section 5]{hump}.  Simple objects of $\mathcal{C}(\mathfrak{g},k)$ are labelled by weights $\lambda\in\Lambda_0\subset\mathfrak{h}^\ast$, the \emph{Weyl alcove} of $\mathfrak{g}$ at level $k$.  Simple objects and their representative weights will be used interchangably but can be easily determined by context.  Geometrically, $\Lambda_0$ can be described as those weights bounded by the \emph{walls} of $\Lambda_0$: the hyperplanes $T_i:=\{\lambda\in\mathfrak{h}^\ast:\langle \lambda+\rho,\alpha_i\rangle=0\}$ for each simple root $\alpha_i\in\mathfrak{h}^\ast$ and $T_0:=\{\lambda\in\mathfrak{h}^\ast:\langle\lambda+\rho,\theta^\vee\rangle<k+h^\vee\}$ where $\theta$ is the longest dominant root.  Reflections through the hyperplane $T_i$ will be denoted $\tau_i$ which generate the \emph{affine Weyl group} $\mathfrak{W}_0$.

\par If $\rho$ is the half-sum of all positive roots of $\mathfrak{g}$ then the dimension of the simple object corresponding to the weight $\lambda\in\Lambda_0$ is given by the \emph{quantum Weyl dimension formula}
\begin{equation*}\dim(\lambda)=\prod_{\alpha\succ0}\dfrac{[\langle\alpha,\lambda+\rho\rangle]}{[\langle\alpha,\rho\rangle]}\end{equation*}
where $[m]$ is the \emph{$q$-analog} of $m\in\mathbb{Z}_{\geq0}$ which for a generic parameter $q$ is
\begin{equation*}[m]=\dfrac{q^m-q^{-m}}{q-q^{-1}}.\end{equation*}
In what follows the numerator of the quantum Weyl dimension formula will be all that needs to be considered as only equalities and inequalities of dimensions with equal denominators appear.  We will denote this numerator $\dim'(\lambda)$.   With the values of $q$ found in Figure \ref{fig:q}, $\dim(\lambda)\in\mathbb{R}_{\geq1}$ (and in particular $[m]\in\mathbb{R}_{>0}$ for all considered $m\in\mathbb{Z}_{>0}$) for all $\lambda\in\Lambda_0$.  The full twist on a simple object $\lambda\in\Lambda_0$ is given by $\theta(\lambda)=q^{\langle\lambda,\lambda+2\rho\rangle}$ which is a root of unity depending on $\mathfrak{g}$, $k$, and $\lambda$.

\par The fusion rules for the categories $\mathcal{C}(\mathfrak{g},k)$ are given by the \emph{quantum Racah formula} \cite[Corollary 8]{Sawin03}.  If $\lambda,\gamma,\mu\in\Lambda_0$, the multiplicity of $\mu$ in the product $\lambda\otimes\gamma$ is the fusion coefficient
\begin{equation*}N_{\lambda,\gamma}^\mu:=\sum_{\tau\in\mathfrak{W}_0}(-1)^{\ell(\tau)}m_\lambda(\tau(\mu)-\gamma)\end{equation*}
where $m_\lambda(\mu)$ is the (classical) dimension of the $\mu$-weight space of the finite dimensional irreducible representation of highest weight $\lambda$.  We refer the reader to \cite[Sections 13,21--24]{hump} for concepts and results from classical representation theory of Lie algebras.

%%%%%%%%%%%%%%%%%%%%%%%%

\subsubsection{$\mathcal{C}(\mathfrak{sl}_2,k)$}\label{sec:a1}

\par Simple objects of $\mathcal{C}(\mathfrak{sl}_2,k)$ are enumerated by $s\in\mathbb{Z}_{\geq0}$ such that $s\leq k$.  Each object, denoted by $(s)$, corresponds to the weight $s\lambda\in\Lambda_0$, where $\lambda$ is the unique fundamental weight.  The dimension of $(s)$ is given by $\dim(s)=[s+1]$ and the full twist on this object by
\begin{equation*}\theta(s)=\exp\left(\dfrac{s(s+2)}{4(k+2)}\cdot2\pi i\right).\end{equation*}
Figures \ref{fig:a1}--\ref{fig:g2} contain geometric visualizations of the Weyl alcove with respect to the specified Lie algebra and level, with nodes representing weights in $\Lambda_0$ and shaded nodes representing those weights which also lie in the root lattice.  Walls of $\Lambda_0$ are illustrated by dashed lines.

\begin{figure}[H]
\centering
\begin{tikzpicture}
\draw[fill=black!25] (0,0) node[above] {\tiny$(0)$} circle (0.1);
\draw[fill=white] (1,0)node[above] {\tiny$\lambda$} circle (0.1);
\draw[fill=black!25] (2,0) circle (0.1);
\draw[fill=white] (3,0)  circle (0.1);
\draw[fill=black!25] (4,0)  circle (0.1);
\draw[fill=white] (5,0)  circle (0.1);
\draw[fill=black!25] (6,0) node[above] {\tiny$(6)$}  circle (0.1);
\draw[dashed] (7,0.5) --node[circle,draw=black,pos = 0.5,inner sep=1pt,fill=white,solid,thin] {\tiny$T_0$} (7,-0.5);
\draw[dashed] (-1,0.5) --node[circle,draw=black,pos = 0.5,inner sep=1pt,fill=white,solid,thin] {\tiny$T_1$} (-1,-0.5);
\end{tikzpicture}
    \caption{$\mathcal{C}(\mathfrak{sl}_2,6)$}%
    \label{fig:a1}%
\end{figure}
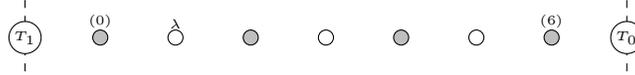

%%%%%%%%%%%%%%%%%%%%%%%%%

\subsubsection{$\mathcal{C}(\mathfrak{sl}_3,k)$}\label{sec:a2}

\par Simple objects of $\mathcal{C}(\mathfrak{sl}_3,k)$ are enumerated by nonnegative integer pairs $(s,t)$, such that $s+t\leq k$.  Each $(s,t)$ corresponds to the weight $s\lambda_1+t\lambda_2\in\Lambda_0$.  The dimension of the simple object $(s,t)$ is given by
\begin{equation*}\dim(s,t)=\dfrac{1}{[2]}[s+1][t+1][s+t+2],\end{equation*}
and the twist on this object by
\begin{equation*}\theta(s,t)=\exp\left(\frac{s^2+3s+st+3t+t^2}{3(k+3)}\cdot2\pi i\right).\end{equation*}
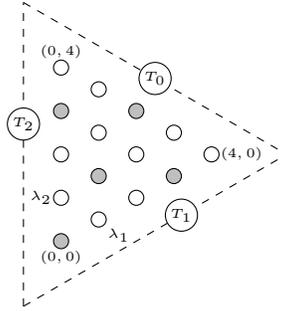
\begin{figure}[H]
\centering
\begin{tikzpicture}
\draw[fill=black!25] (0,0) circle (0.1) node[below] {\tiny$(0,0)$};
\draw[fill=white!20] (0,1.732/3)  node[left] {\tiny$\lambda_2$} circle (0.1);
\draw[fill=white!20] (0,2*1.732/3) circle (0.1);
\draw[fill=black!25] (0,3*1.732/3) circle (0.1);
\draw[fill=white!20] (0,4*1.732/3) circle (0.1) node[above] {\tiny$(0,4)$};
\draw[fill=white!20] (1/2,0.5*1.732/3) node[below right] {\tiny$\lambda_1$} circle (0.1);
\draw[fill=black!25] (1/2,1*1.732/3+0.5*1.732/3) circle (0.1);
\draw[fill=white!20] (1/2,2*1.732/3+0.5*1.732/3) circle (0.1);
\draw[fill=white!20] (1/2,3*1.732/3+0.5*1.732/3) circle (0.1);
\draw[fill=white!20] (1,1*1.732/3) circle (0.1);
\draw[fill=white!20] (1,2*1.732/3) circle (0.1);
\draw[fill=black!25] (1,3*1.732/3) circle (0.1);
\draw[fill=black!25] (3/2,1.5*1.732/3) circle (0.1);
\draw[fill=white!20] (3/2,2.5*1.732/3) circle (0.1);
\draw[fill=white!20] (2,2*1.732/3) circle (0.1) node[right] {\tiny$(4,0)$};
\draw[dashed] (-0.5,-1.5*1.732/3) --node[circle,draw=black,pos = 0.6,inner sep=1pt,fill=white,solid,thin] {\tiny$T_1$} (3,2*1.732/3);
\draw[dashed] (-0.5,-1.5*1.732/3) --node[circle,draw=black,pos = 0.6,inner sep=1pt,fill=white,solid,thin] {\tiny$T_2$} (-0.5,5.5*1.732/3);
\draw[dashed] (-0.5,5.5*1.732/3) --node[circle,draw=black,pos = 0.5,inner sep=1pt,fill=white,solid,thin] {\tiny$T_0$} (3,2*1.732/3);
\end{tikzpicture}
    \caption{$\mathcal{C}(\mathfrak{sl}_3,4)$}%
    \label{fig:a2}%
\end{figure}

%%%%%%%%%%%%%%%%%%%%%%%%%%%%

\subsubsection{$\mathcal{C}(\mathfrak{so}_5,k)$}\label{sec:so5}

\par Simple objects of $\mathcal{C}(\mathfrak{so}_5,k)$ are enumerated by nonnegative integer pairs $(s,t)$, such that $s+t\leq k$.  Each $(s,t)$ corresponds to the weight $s\lambda_1+t\lambda_2\in\Lambda_0$.  The dimension of the simple object of $\mathcal{C}(\mathfrak{so}_5,k)$ corresponding to the weight $(s,t)$ is given by
\begin{equation*}\dim(s,t)=\dfrac{[2(s+1)][t+1][2(s+t+2)][2s+t+3]}{[2][3][4][1]},\label{so5dim}\end{equation*}
and the twist on this object by
\begin{equation*}\theta(s,t)=\exp\left(\frac{2s^2+2st+6s+t^2+4t}{4(k+3)}\cdot2\pi i\right).\label{so5twist}\end{equation*}
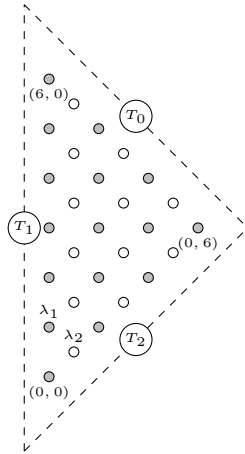
\begin{figure}[H]
\centering
\begin{tikzpicture}[scale=0.66]
\draw[fill=black!25] (0,0) circle (0.1) node[below] {\tiny$(0,0)$};
\draw[fill=black!25] (0,1) circle (0.1) node[above] {\tiny$\lambda_1$};
\draw[fill=black!25] (0,2) circle (0.1);
\draw[fill=black!25] (0,3) circle (0.1);
\draw[fill=black!25] (0,4) circle (0.1);
\draw[fill=black!25] (0,5) circle (0.1);
\draw[fill=black!25] (0,6) circle (0.1) node[below] {\tiny$(6,0)$};
\draw[fill=white!20] (1/2,0+1/2) circle (0.1) node[above] {\tiny$\lambda_2$};
\draw[fill=white!20] (1/2,1+1/2) circle (0.1);
\draw[fill=white!20] (1/2,2+1/2) circle (0.1);
\draw[fill=white!20] (1/2,3+1/2) circle (0.1);
\draw[fill=white!20] (1/2,4+1/2) circle (0.1);
\draw[fill=white!20] (1/2,5+1/2) circle (0.1);
\draw[fill=black!25] (1,1) circle (0.1);
\draw[fill=black!25] (1,2) circle (0.1);
\draw[fill=black!25] (1,3) circle (0.1);
\draw[fill=black!25] (1,4) circle (0.1);
\draw[fill=black!25] (1,5) circle (0.1);
\draw[fill=white!20] (3/2,1+1/2) circle (0.1);
\draw[fill=white!20] (3/2,2+1/2) circle (0.1);
\draw[fill=white!20] (3/2,3+1/2) circle (0.1);
\draw[fill=white!20] (3/2,4+1/2) circle (0.1);
\draw[fill=black!25] (2,2) circle (0.1);
\draw[fill=black!25] (2,3) circle (0.1);
\draw[fill=black!25] (2,4) circle (0.1);
\draw[fill=white!20] (5/2,2+1/2) circle (0.1);
\draw[fill=white!20] (5/2,3+1/2) circle (0.1);
\draw[fill=black!25] (3,3) circle (0.1) node[below] {\tiny$(0,6)$};
\draw[dashed] (-0.5,-1.5) -- node[circle,draw=black,pos = 0.5,inner sep=1pt,fill=white,solid,thin] {\tiny$T_1$} (-0.5,7.5);
\draw[dashed] (-0.5,-1.5) -- node[circle,draw=black,pos = 0.5,inner sep=1pt,fill=white,solid,thin] {\tiny$T_2$} (4,3);
\draw[dashed] (-0.5,7.5) -- node[circle,draw=black,pos = 0.5,inner sep=1pt,fill=white,solid,thin] {\tiny$T_0$} (4,3);
\end{tikzpicture}
    \caption{$\mathcal{C}(\mathfrak{so}_5,6)$}%
    \label{fig:so5}%
\end{figure}

%%%%%%%%%%%%%%%%%%%%%%%%%%%%%%

\subsubsection{$\mathcal{C}(\mathfrak{g}_2,k)$}

\par Simple objects of $\mathcal{C}(\mathfrak{g}_2,k)$ are enumerated by nonnegative integer pairs $(s,t)$, such that $s+2t\leq k$.  Each $(s,t)$ corresponds to the weight $s\lambda_1+t\lambda_2\in\Lambda_0$.  The dimension of the simple object $(s,t)$ is given by
\begin{equation*}
\dim(s,t)=\dfrac{[s+1][3(t+1)][3(s+t+2)][3(s+2t+3)][s+3t+4][2s+3t+5]}{[1][3][6][9][4][5]},
\end{equation*}
and the twist on this object by
\begin{equation*}\theta(s,t)=\exp\left(\dfrac{s^2+3st+5s+3t^2+9t}{3(k+4)}\cdot2\pi i\right).\end{equation*}
\begin{figure}[H]
\centering
\begin{tikzpicture}[scale=0.33]
\draw[fill=black!25] (0*0.5*1.414,0*1.225+0*2.4495) circle (0.15) node[below,xshift = 0.05cm] {\tiny$(0,\!0)$};
\draw[fill=black!25] (0*0.5*1.414,0*1.225+1*2.4495) circle (0.15) node[below] {\tiny$\lambda_2$};
\draw[fill=black!25] (1*0.5*1.414,1*1.225+0*2.4495) circle (0.15) node[right] {\tiny$\lambda_1$};
\draw[fill=black!25] (8*0.5*1.414,8*1.225+0*2.4495) circle (0.15) node[above] {\tiny$(8,\!0)$};
\draw[fill=black!25] (7*0.5*1.414,7*1.225+0*2.4495) circle (0.15);
\draw[fill=black!25] (6*0.5*1.414,6*1.225+0*2.4495) circle (0.15);
\draw[fill=black!25] (5*0.5*1.414,5*1.225+0*2.4495) circle (0.15);
\draw[fill=black!25] (4*0.5*1.414,4*1.225+0*2.4495) circle (0.15);
\draw[fill=black!25] (3*0.5*1.414,3*1.225+0*2.4495) circle (0.15);
\draw[fill=black!25] (2*0.5*1.414,2*1.225+0*2.4495) circle (0.15);
\draw[fill=black!25] (0*0.5*1.414,0*1.225+2*2.4495) circle (0.15);
\draw[fill=black!25] (0*0.5*1.414,0*1.225+3*2.4495) circle (0.15);
\draw[fill=black!25] (0*0.5*1.414,0*1.225+4*2.4495) circle (0.15) node[above,xshift = 0.05cm] {\tiny$(0,\!4)$};
\draw[fill=black!25] (1*0.5*1.414,1*1.225+1*2.4495) circle (0.15); %rho
\draw[fill=black!25] (2*0.5*1.414,2*1.225+1*2.4495) circle (0.15);
\draw[fill=black!25] (1*0.5*1.414,1*1.225+2*2.4495) circle (0.15);
\draw[fill=black!25] (1*0.5*1.414,1*1.225+2*2.4495)  circle (0.15);
\draw[fill=black!25] (1*0.5*1.414,1*1.225+3*2.4495) circle (0.15);
\draw[fill=black!25] (2*0.5*1.414,2*1.225+2*2.4495) circle (0.15);
\draw[fill=black!25] (2*0.5*1.414,2*1.225+3*2.4495) circle (0.15);
\draw[fill=black!25]  (3*0.5*1.414,3*1.225+1*2.4495) circle (0.15);
\draw[fill=black!25] (3*0.5*1.414,3*1.225+2*2.4495) circle (0.15);
\draw[fill=black!25] (4*0.5*1.414,4*1.225+1*2.4495) circle (0.15);
\draw[fill=black!25] (4*0.5*1.414,4*1.225+2*2.4495) circle (0.15);
\draw[fill=black!25] (5*0.5*1.414,5*1.225+1*2.4495) circle (0.15);
\draw[fill=black!25] (6*0.5*1.414,6*1.225+1*2.4495) circle (0.15);
\draw[dashed] (-1*0.5*1.414,-1*1.225-1*2.4495) --node[circle,draw=black,pos = 0.5,inner sep=1pt,fill=white,solid,thin] {\tiny$T_2$} (-1*0.5*1.414,-1*1.225+5*2.4495);
\draw[dashed] (-1*0.5*1.414,-1*1.225-1*2.4495) --node[circle,draw=black,pos = 0.5,inner sep=1pt,fill=white,solid,thin] {\tiny$T_1$} (11*0.5*1.414,11*1.225-1*2.4495);
\draw[dashed] (-1*0.5*1.414,-1*1.225+5*2.4495) --node[circle,draw=black,pos = 0.5,inner sep=1pt,fill=white,solid,thin] {\tiny$T_0$} (11*0.5*1.414,11*1.225-1*2.4495);
\end{tikzpicture}
    \caption{$\mathcal{C}(\mathfrak{g}_2,8)$}%
    \label{fig:g2}%
\end{figure}
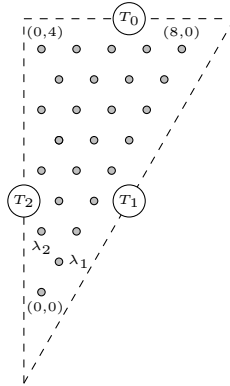

%%%%%%%%%%%%%%%%%%%%%%%%%%%%%%%%%%%%%%%%%%%%%%%%%%%%%%%%%%%%%%%%%%%%%%%%%%%%%%%%%%%%%%%%%%%%%%%%%%%

\subsection{Fusion rules in rank 2}\label{sec:geometry}

\par It is necessary to the proof of future claims to consider the geometric interpretation of the quantum Racah formula specifically for rank 2 Lie algebras \cite[Remark 4]{Sawin03}.  The notation and concepts introduced in this subsection will be used prolifically throughout the proof of Theorem \ref{thebigone} and are illustrated by example in Figure \ref{fig:b2example} to compute $N_{\lambda,\gamma}^\mu$ for arbitrary $\mu\in\Lambda_0$, $\lambda:=(3,4)$, and $\gamma:=(3,6)$ (white node) in $\mathcal{C}(\mathfrak{so}_5,12)$.

\par Given $\lambda,\gamma\in\Lambda_0$, the quantum Racah formula states that to calculate the fusion coefficients $N_{\lambda,\gamma}^\mu$ for any $\mu\in\Lambda_0$ geometrically, one should compute $\Pi(\lambda)$, the classical weight diagram for the finite-dimensional irreducible representation of highest weight $\lambda$, and (for visual ease) we illustrate its convex hull, $\overline{\Pi}(\lambda)$.  For this purpose reflections in the classical Weyl group are illustrated in Figure \ref{fig:2sub1} by thin lines.  One can then shift $\overline{\Pi}(\lambda)$ and $\Pi(\lambda)$ so they are centered at $\gamma$, denoting these shifted sets by $\overline{\Pi}(\lambda:\gamma)$ and $\Pi(\lambda:\gamma)$.  Now for a fixed weight $\mu\in\Lambda_0$, $\tau\in\mathfrak{W}_0$ will contribute to the sum $N_{\lambda,\gamma}^\mu$ if and only if there exists $\mu'\in\Pi(\lambda:\gamma)$ such that $\tau(\mu')=\mu$.  The walls of $\Lambda_0$ are illustrated (and labelled) in Figure \ref{fig:2sub2} by dashed lines and all contributing $\tau\in\mathfrak{W}_0$ can be visualized by \emph{folding} $\overline{\Pi}(\lambda:\gamma)$ along the walls of $\Lambda_0$ until it lies completely within $\Lambda_0$.  To emphasize the effect of folding, the folded portions of $\overline{\Pi}(\lambda:\gamma)$ are illustrated in Figure \ref{fig:2sub2} with emphasized shading, while regions of $\overline{\Pi}(\lambda:\gamma)$ unaffected by folding are given a crosshatch pattern.

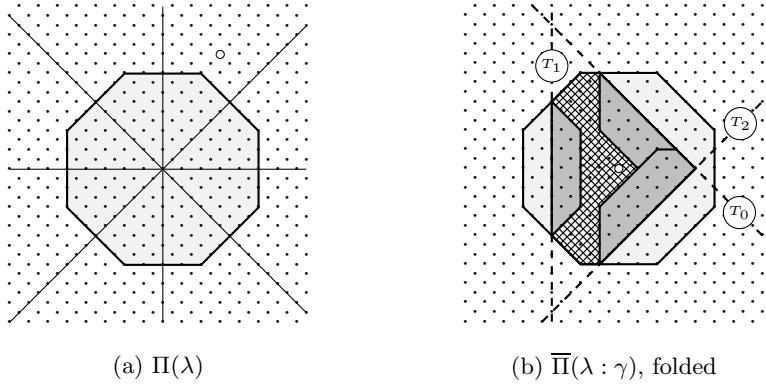
\begin{figure}[H]
\centering
\begin{subfigure}{.5\textwidth}
  \centering
\begin{tikzpicture}[scale=0.18]
\draw[fill=black!5,thick] (2*1.414,5*1.414) -- (5*1.414,2*1.414) -- (5*1.414,-2*1.414) -- (2*1.414,-5*1.414) -- (-2*1.414,-5*1.414) -- (-5*1.414,-2*1.414) -- (-5*1.414,2*1.414) -- (-2*1.414,5*1.414) -- cycle;
\foreach \x in {-8,-7,...,7} {
	\foreach \y in {-8,-7,...,8} {
        \node at (\x*1.4142,\y*1.4142) {$\cdot$};
    		}
		};
\foreach \x in {-8,-7,...,7} {
	\foreach \y in {-8,-7,...,8} {
        \node at (\x*1.4142+0.707,\y*1.4142+0.707) {$\cdot$};
    		}
		};
\draw[thin] (0*1.414,-8*1.414) -- (0*1.414,8.5*1.414);
\draw[thin] (-8*1.414,0*1.414) -- (7.5*1.414,0*1.414);
\draw[thin] (-8*1.414,8*1.414) -- (7.5*1.414,-7.5*1.414);
\draw[thin] (-8*1.414,-8*1.414) -- (7.5*1.414,7.5*1.414);
\draw[fill=white] (3*1.414,6*1.414) circle (0.3);
\end{tikzpicture}
  \caption{$\Pi(\lambda)$}
  \label{fig:2sub1}
\end{subfigure}%
\begin{subfigure}{.5\textwidth}
  \centering
\begin{tikzpicture}[scale=0.18]
\draw[fill=black!5,thick] (2*1.414,5*1.414) -- (5*1.414,2*1.414) -- (5*1.414,-2*1.414) -- (2*1.414,-5*1.414) -- (-2*1.414,-5*1.414) -- (-5*1.414,-2*1.414) -- (-5*1.414,2*1.414) -- (-2*1.414,5*1.414) -- cycle;
\draw[pattern=crosshatch,thick]  (-3.5*1.414,3.5*1.414)--   (-2*1.414,5*1.414)  --   (-1*1.414,5*1.414) -- (4*1.414,0*1.414) -- (-1*1.414,-5*1.414) -- (-2*1.414,-5*1.414) -- (-3.5*1.414,-3.5*1.414) -- cycle;
\draw[fill=black!25,thick] (-3.5*1.414,3.5*1.414) -- (-2*1.414,2*1.414)  --(-2*1.414,-2*1.414)-- (-3.5*1.414,-3.5*1.414) -- cycle;
\draw[fill=black!25,thick] (-1*1.414,2*1.414) -- (-1*1.414,5*1.414) -- (4*1.414,0*1.414) -- (2.5*1.414,-1.5*1.414) -- cycle;
\draw[fill=black!25,thick] (-1*1.414,-2*1.414) -- (2*1.414,1*1.414) -- (3*1.414,1*1.414)  -- (4*1.414,0*1.414) --(-1*1.414,-5*1.414)-- cycle;
\foreach \x in {-8,-7,...,7} {
	\foreach \y in {-8,-7,...,8} {
        \node at (\x*1.4142,\y*1.4142) {$\cdot$};
    		}
		};
\foreach \x in {-8,-7,...,7} {
	\foreach \y in {-8,-7,...,8} {
        \node at (\x*1.4142+0.707,\y*1.4142+0.707) {$\cdot$};
    		}
		};
\draw[dashed,thick] (-3.5*1.414,-8*1.414) -- node[circle,draw=black,pos = 0.81,inner sep=1pt,fill=white,solid,thin] {\tiny$T_1$}  (-3.5*1.414,8.5*1.414);
\draw[dashed,thick] (-4*1.414,-8*1.414) -- node[circle,draw=black,pos = 0.9,inner sep=1pt,fill=white,solid,thin] {\tiny$T_2$}   (7.5*1.414,3.5*1.414);
\draw[dashed,thick] (7.5*1.414,-3.5*1.414) -- node[circle,draw=black,pos = 0.1,inner sep=1pt,fill=white,solid,thin] {\tiny$T_0$}   (-4.5*1.414,8.5*1.414);
\draw[fill=white] (0*1.414,0*1.414) circle (0.3);
\end{tikzpicture}
  \caption{$\overline{\Pi}(\lambda:\gamma)$, folded}
  \label{fig:2sub2}
\end{subfigure}
\caption{$\lambda\otimes\gamma\in\mathcal{C}(\mathfrak{so}_5,12)$}
\label{fig:b2example}
\end{figure}

\par For arbitrary $\lambda,\gamma,\mu\in\Lambda_0$ there may be several $\tau\in\mathfrak{W}_0$ which contribute (positively or negatively) to the sum $N_{\lambda,\gamma}^\mu$ in the quantum Racah formula, but for many fusion coefficients the only contribution comes from the identity of $\mathfrak{W}_0$ and are therefore easily determined to be zero or positive.  In Figure \ref{fig:2sub2}, these coefficients correspond to weights in both $\Pi(\lambda:\gamma)$ and the crosshatched region.

\begin{lem}\label{fusion}
Fix $\lambda,\gamma,\mu\in\Lambda_0$. If
\begin{enumerate}
\item[(1)] $\mu\in\Pi(\lambda:\gamma)$, and
\item[(2)] $\tau_i(\mu')\neq\mu$ for any $\mu'\in\Pi(\lambda:\gamma)$ and $i=0,1,2$,
\end{enumerate}
then $N_{\lambda,\gamma}^\mu>0$.
\end{lem}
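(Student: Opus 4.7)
My plan is to expand
\[
N_{\lambda,\gamma}^\mu=\sum_{\tau\in\mathfrak{W}_0}(-1)^{\ell(\tau)}m_\lambda(\tau(\mu)-\gamma)
\]
via the quantum Racah formula and argue that the identity $e\in\mathfrak{W}_0$ is the unique element producing a nonzero summand, forcing $N_{\lambda,\gamma}^\mu=m_\lambda(\mu-\gamma)\geq 1$. The identity term is positive because hypothesis (1) gives $\mu-\gamma\in\Pi(\lambda)$. For each simple reflection $\tau_i$, hypothesis (2) is equivalent (since $\tau_i$ is an involution) to $\tau_i(\mu)\notin\Pi(\lambda:\gamma)$, hence $m_\lambda(\tau_i(\mu)-\gamma)=0$ and the three length-one contributions vanish.

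The substantive work is to rule out every $\tau\in\mathfrak{W}_0$ with $\ell(\tau)\geq 2$. I would proceed by minimal counterexample: if some $\tau$ of smallest length at least $2$ satisfies $\tau(\mu)\in\Pi(\lambda:\gamma)$, then both $\mu$ and $\tau(\mu)$ lie in the convex polytope $\overline{\Pi}(\lambda:\gamma)$, and so does the segment joining them. This segment must exit $\Lambda_0$ through one of the three walls $T_i$, producing a point $p\in T_i\cap\overline{\Pi}(\lambda:\gamma)$, and I would try to promote $p$ to an honest weight $\mu'\in\Pi(\lambda:\gamma)$ satisfying $\tau_i(\mu')=\mu$, contradicting (2). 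The Weyl-invariance of $\Pi(\lambda)$ under the classical Weyl group, combined with the explicit affine-Weyl tessellation of $\mathfrak{h}^\ast$ by copies of the triangular alcove $\Lambda_0$, is what should power this extraction.

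The hardest step is discretizing this geometric argument: $p$ need not itself be a weight, so the witness $\mu'$ must be produced by a lattice consideration rather than a pure convexity argument. In practice I anticipate this reduces to a case-by-case verification --- one for each of the triangular alcove shapes corresponding to $A_2$, $B_2$, and $G_2$ --- that the union of the first-order folded regions $\bigcup_{i=0,1,2}\tau_i(\Pi(\lambda:\gamma))\cap\Lambda_0$ already contains every higher-order folded region $\tau(\Pi(\lambda:\gamma))\cap\Lambda_0$ with $\ell(\tau)\geq 2$, which is precisely the geometric content of Figure \ref{fig:2sub2}.
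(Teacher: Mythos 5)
Your overall strategy matches the paper's: the identity term is positive by (1), hypothesis (2) kills the length-one terms, and higher-length terms should be ruled out by folding $\tau(\mu)$ back toward the fundamental alcove using the convexity and Weyl-invariance of $\overline{\Pi}(\lambda:\gamma)$. Two points need repair, though, and they matter because they would send you down an unnecessarily long path.

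First, the "promote $p\in T_i\cap\overline{\Pi}(\lambda:\gamma)$ to a witness $\mu'$ with $\tau_i(\mu')=\mu$" step is the wrong move: what you need in $\Pi(\lambda:\gamma)$ is not the reflection of $\mu$ but the reflection of $\tau(\mu)$. The cleaner inductive step is: since $\mu$ and $\tau(\mu)$ sit in distinct alcoves, some wall $T_j$ of $\Lambda_0$ separates them (every alcove other than $\Lambda_0$ is cut off by at least one of $T_0,T_1,T_2$); reflecting $\tau(\mu)$ across $T_j$ moves it strictly closer to $\gamma$ along the $\alpha_j$-direction, and since $\overline{\Pi}(\lambda:\gamma)$ is convex and symmetric under the classical reflection through $\gamma$ parallel to $T_j$, the fold stays inside $\overline{\Pi}(\lambda:\gamma)$; moreover $\ell(\tau_j\tau)=\ell(\tau)-1$, contradicting minimality (and eventually producing a length-one contribution violating (2)). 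This is what the paper's single sentence about $\tau_i$ and $\overline{\Pi}(\lambda:\gamma)$ is encoding, albeit with an imprecise statement of the folding inclusion. Second, the discretization worry that motivates your proposed $A_2$/$B_2$/$G_2$ case split is actually a non-issue: $\Pi(\lambda)$ is a saturated set of weights, so $\nu\in\Pi(\lambda)$ if and only if $\nu\in\overline{\Pi}(\lambda)$ and $\lambda-\nu$ lies in the root lattice, and each affine reflection $\tau_i$ translates a weight by an integer multiple of $\alpha_i$, hence preserves root-lattice cosets. Thus any folded point that lands in $\overline{\Pi}(\lambda:\gamma)$ automatically lands in $\Pi(\lambda:\gamma)$, and no casework on the alcove shape is required.
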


\begin{proof}
By assumption (1), $m_\lambda(\mu-\gamma)>0$ is one term in the quantum Racah formula for $N_{\lambda,\gamma}^\mu$.  Any nontrivial $\tau$ contributing to $N_{\lambda,\gamma}^\mu$, does so via $\mu'\in\Pi(\lambda:\gamma)$ conjugate to $\mu$ via $\mathfrak{W}_0$.  But one can verify using elementary plane geometry that for any $\lambda,\gamma\in\Lambda_0$, $\tau_i\left(\overline{\Pi}(\lambda:\gamma)\right)\subset\overline{\Pi}(\lambda:\gamma)$ for each generating reflection $i=0,1,2$ of $\mathfrak{W}_0$.  This observation along with assumption (2) implies no reflections of length greater than or equal to one may contribute to the desired fusion coefficient and moreover $N_{\lambda,\gamma}^\mu=m_\lambda(\mu-\gamma)>0$.
\end{proof}

%%%%%%%%%%%%%%%%%%%%%%%%%%%%%%%%%%%%%%%%%%%%%%%%%%%%%%%%%%%%%%%%%%%%%%%%%%%%%%%%%%%%%%%%%%%%%%%%%%%

%%%%%%%%%%%%%%%%%%%%%%%%%%%%%%%%%%%%%%%%%%%%%%%%%%%%%%%%%%%%%%%%%%%%%%%%%%%%%%%%%%%%%%%%%%%%%%%%%%%

%%%%%%%%%%%%%%%%%%%%%%%%%%%%%%%%%%%%%%%%%%%%%%%%%%%%%%%%%%%%%%%%%%%%%%%%%%%%%%%%%%%%%%%%%%%%%%%%%%%

\section{Technical machinery}\label{machinery}

\par This section contains the main consequences of being a connected \'etale algebra in $\mathcal{C}(\mathfrak{g},k)$ when $\mathfrak{g}$ has rank 2, and basic inequalities involving quantum analogs which will allow a proof that such algebras are reasonably uncommon.  Lastly a sketch of the proof of Theorem \ref{thebigone} is illustrated by example in the case $\mathfrak{g}=\mathfrak{sl}_2$.

\subsection{\'Etale algebra restrictions in $\mathcal{C}(\mathfrak{g},k)$}\label{machinerya}

\par Let $A$ be a connected \'etale algebra in $\mathcal{C}:=\mathcal{C}(\mathfrak{g},k)$ where $\mathfrak{g}$ is $\mathfrak{sl}_3$, $\mathfrak{so}_5$, or $\mathfrak{g}_2$ and $(\ell,m)\subset A$ be a nontrivial summand of $A$ which is \emph{minimal} in the sense that $\ell+m$ is minimal in the case of $\mathfrak{sl}_3$ and $\mathfrak{so}_5$, and $\ell+(3/2)m$ is minimal in the case of $\mathfrak{g}_2$.  The reasons for this distinction will be explained in the proof of Lemma \ref{simplefree}.

\begin{note}
Our goal is not to reprove Theorem \ref{thebigone} in the rank 1 case so it will be satisfactory to point out the following lemmas can be restated for $\mathcal{C}(\mathfrak{sl}_2,k)$ where $(\ell)$ is the analogous minimal nontrivial summand of $A\in\mathcal{C}(\mathfrak{sl}_2,k)$.
\end{note}

\begin{lem}\label{simplefree}
If $(s,t)\in\Lambda_0$ and $2(s+t)<\ell+m$ in the case $\mathfrak{g}=\mathfrak{sl}_3,\mathfrak{so}_5$, or $2(s+(3/2)t)<\ell+(3/2)m$ in the case $\mathfrak{g}=\mathfrak{g}_2$, then $(s,t)\otimes A$ is a simple right $A$-module.
\end{lem}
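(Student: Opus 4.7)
The plan is to establish $\End_{\mathcal{C}_A}((s,t)\otimes A)=\mathbb{C}$, which forces $(s,t)\otimes A$ to be simple as a right $A$-module. By the induction/forgetful adjunction between $\mathcal{C}$ and $\mathcal{C}_A$ together with the decomposition of $A$ into simple summands,
\[
\dim\End_{\mathcal{C}_A}\bigl((s,t)\otimes A\bigr)=\dim\Hom_\mathcal{C}\bigl((s,t),(s,t)\otimes A\bigr)=\sum_{(\ell',m')\subset A}N_{(s,t),(\ell',m')}^{(s,t)}.
\]
Since the unit summand $\1\subset A$ already contributes $1$, the task reduces to showing $N_{(s,t),(\ell',m')}^{(s,t)}=0$ for every nontrivial simple summand $(\ell',m')\subset A$. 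Frobenius reciprocity in the rigid pivotal category $\mathcal{C}$ identifies this coefficient with $N_{(s,t)^\ast,(s,t)}^{(\ell',m')}$, namely the multiplicity of $(\ell',m')$ in the tensor product $(s,t)^\ast\otimes(s,t)$.

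Next I invoke the comparison $N^{\mathrm{quantum}}\leq N^{\mathrm{classical}}$ between fusion in $\mathcal{C}(\mathfrak{g},k)$ and ordinary tensor multiplicities in $\Rep(\mathfrak{g})$, reducing to showing $(\ell',m')$ is not a summand of the classical module $V_{(s,t)^\ast}\otimes V_{(s,t)}$. Its highest weight is $(s+t,s+t)$ for $\mathfrak{sl}_3$ (as $(s,t)^\ast=(t,s)$) and $(2s,2t)$ for the self-dual $\mathfrak{so}_5$ and $\mathfrak{g}_2$. Expressing the simple roots in the fundamental weight basis — namely $\alpha_1=(2,-1),\alpha_2=(-1,2)$ for $\mathfrak{sl}_3$; $\alpha_1=(2,-2),\alpha_2=(-1,2)$ for $\mathfrak{so}_5$; and $\alpha_1=(2,-1),\alpha_2=(-3,2)$ for $\mathfrak{g}_2$ — and solving the two-by-two system expressing the difference between the highest weight and $(\ell',m')$ as a nonnegative combination $a\alpha_1+b\alpha_2$, the requirement $a,b\geq 0$ yields the necessary inequalities
\[
\ell'+m'\leq 2(s+t)\quad(\mathfrak{sl}_3,\mathfrak{so}_5),\qquad \ell'+\tfrac{3}{2}m'\leq 2\bigl(s+\tfrac{3}{2}t\bigr)\quad(\mathfrak{g}_2);
\]
the factor $3/2$ in the third case is forced by the $(-3,2)$ entry of $\alpha_2$, which itself reflects the long-to-short root length ratio $\sqrt{3}$ in $\mathfrak{g}_2$. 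Minimality of $(\ell,m)$ gives $\ell'+m'\geq\ell+m$ (respectively $\ell'+(3/2)m'\geq\ell+(3/2)m$), and the lemma's hypothesis pushes these strictly past the bounds above — this is precisely why the two flavors of minimality had to be chosen in the way the paper does. Contradiction.

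The most subtle step is the quantum-to-classical comparison, since $\mathfrak{W}_0$ contributes additional signed terms to the Racah formula beyond those from the finite Weyl group $W$. The cleanest justification comes from the geometric interpretation of Section \ref{sec:geometry}: the identity element of $\mathfrak{W}_0$ contributes $m_{(s,t)^\ast}\bigl((\ell',m')-(s,t)\bigr)$, which vanishes by the dominance bound, and the inclusion $\tau_i(\overline{\Pi}(\lambda:\gamma))\subset\overline{\Pi}(\lambda:\gamma)$ observed in the proof of Lemma \ref{fusion} propagates this vanishing through the generating reflections $\tau_0,\tau_1,\tau_2$ of $\mathfrak{W}_0$. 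Every term in the quantum Racah sum is therefore zero, confirming $N^{\mathrm{quantum}}=0$ directly.
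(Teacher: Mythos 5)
Your proof is correct and takes essentially the same route as the paper: reduce via the induction/forgetful adjunction to showing that no nontrivial simple summand $(\ell',m')$ of $A$ appears in $(s,t)\otimes(s,t)^\ast$, then rule this out by a dominance bound on the weights of $\Pi(\lambda:\lambda^\ast)$. Your explicit expansion of $\lambda+\lambda^\ast-(\ell',m')$ in the simple-root basis, with nonnegativity of the coefficients forcing $\ell'+m'\leq 2(s+t)$ (resp.\ $\ell'+\tfrac{3}{2}m'\leq 2(s+\tfrac{3}{2}t)$), is precisely the content of the paper's phrase ``follows inductively on dominance ordering,'' and makes explicit why the two flavors of minimality in Section \ref{machinerya} are the right ones.
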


\begin{proof} Label $\lambda:=(s,t)$.  Then we have by \cite[Lemma 2, Lemma 4]{Ostrik2003}
\begin{equation}
\Hom_{\mathcal{C}_A}(\lambda\otimes A,\lambda\otimes A)=\Hom_\mathcal{C}(\lambda,\lambda\otimes A)=\Hom_\mathcal{C}(\lambda\otimes\lambda^\ast,A).\label{thatone1}
\end{equation}
The highest weight in $\Pi(\lambda:\lambda^\ast)$ is $\gamma:=(s+t,s+t)$ when $\mathfrak{g}=\mathfrak{sl}_3$ and $\gamma:=(2s,2t)$ when $\mathfrak{g}=\mathfrak{so}_5,\mathfrak{g}_2$.  The respective assumptions on $(s,t)$ relative to $(\ell,m)$ in our hypotheses imply $\gamma\neq(\ell,m)$ and it remains to check no other weights $(s',t')\in\Pi(\lambda:\lambda^\ast)$ are equal to $(\ell,m)$ either.  To this end it will suffice to check $\gamma-\alpha\neq(\ell,m)$ for each simple root $\alpha$ since our claim follows inductively on dominance ordering.  If $\mathfrak{g}=\mathfrak{sl}_3$, $\gamma-\alpha_1=(s+t-2,s+t+1)$ which is not equal to $(\ell,m)$ since $s+t-2+s+t+1=2(s+t)-1<\ell+m$ and symmetrically for $\alpha_2$.  If $\mathfrak{g}=\mathfrak{so}_5$, $\gamma-\alpha_1=(2s-2,2t+2)$ which is not equal to $(\ell,m)$ since $2s-2+2t+2=2(s+t)<\ell+m$ and $\gamma-\alpha_2=(2s+1,2t-2)$ which is not equal to $(\ell,m)$ since $2s+1+2t-2=2(s+2)-1<\ell+m$.  Lastly if  $\mathfrak{g}=\mathfrak{g}_2$, $\gamma-\alpha_1=(2s-2,2t+1)$ which is not equal to $(\ell,m)$ since $2s-2+(3/2)(2t+1)=2(s+(3/2)t)-1/2<\ell+(3/2)m$, and $\gamma-\alpha_2=(2s+3,2t-2)$ which is not equal to $(\ell,m)$ since $2s+3+(3/2)(2t-2)=2(s+(3/2)t)<\ell+(3/2)m$.  Moreover the right-hand side of (\ref{thatone1}) is one-dimensional and $\lambda\otimes A$ is a simple object in $\mathcal{C}_A$.
\end{proof}

\begin{lem}\label{simplesummand}
If $M\in\mathcal{C}_A$, and $(s,t)\subset M$ satisfies the hypotheses of Lemma \ref{simplefree}, then $(s,t)\otimes A$ is a right $A$-submodule of $M$.
\end{lem}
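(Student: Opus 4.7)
The plan is to exploit the adjunction between the free module functor $F(-) := (-)\otimes A : \mathcal{C} \to \mathcal{C}_A$ and the forgetful functor $G : \mathcal{C}_A \to \mathcal{C}$. This adjunction gives a natural isomorphism $\Hom_{\mathcal{C}_A}((s,t)\otimes A, M) \cong \Hom_\mathcal{C}((s,t), G(M))$, which is precisely \cite[Lemma 2]{Ostrik2003} invoked in the proof of Lemma \ref{simplefree}. This is the only real input beyond Lemma \ref{simplefree} itself.

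Concretely, I would first observe that the inclusion $(s,t) \subset M$ in $\mathcal{C}$ (after applying $G$) gives a nonzero element of $\Hom_\mathcal{C}((s,t), G(M))$. Transporting this across the adjunction produces a nonzero morphism $f : (s,t)\otimes A \to M$ in $\mathcal{C}_A$. Next, I would invoke Lemma \ref{simplefree}, whose hypothesis is imposed on $(s,t)$ by assumption, to conclude that $(s,t)\otimes A$ is a simple object of $\mathcal{C}_A$.

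Finally, since $A$ is separable the category $\mathcal{C}_A$ is semisimple (as recorded in Section 2.2), and in a semisimple abelian category any nonzero morphism out of a simple object is a monomorphism: $\ker f$ is an $A$-submodule of the simple module $(s,t)\otimes A$, hence either $0$ or all of $(s,t)\otimes A$, and the latter is excluded by $f\neq 0$. Thus $f$ exhibits $(s,t)\otimes A$ as a right $A$-submodule of $M$, completing the argument.

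I do not anticipate any genuine obstacle: once Lemma \ref{simplefree} is in hand, this statement is a formal consequence of Frobenius reciprocity for the induction/restriction pair along $A$, together with semisimplicity of $\mathcal{C}_A$. The argument is short enough that the only thing to be careful about is citing the correct adjunction (so that the Hom-space on the $\mathcal{C}$-side really is $\Hom_\mathcal{C}((s,t),M)$ rather than $\Hom_\mathcal{C}((s,t), M\otimes A)$ or similar), which is handled by the reference above.
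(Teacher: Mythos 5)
Your argument is exactly the paper's: apply the free-module/forgetful adjunction (Lemma 2 of \cite{Ostrik2003}) to get $\Hom_{\mathcal{C}_A}((s,t)\otimes A,M)\cong\Hom_{\mathcal{C}}((s,t),M)\neq 0$, invoke Lemma \ref{simplefree} for simplicity of $(s,t)\otimes A$, and conclude that the resulting nonzero map from a simple object is a monomorphism. The only cosmetic difference is that you make the final monomorphism step explicit (citing semisimplicity of $\mathcal{C}_A$, though Schur's lemma alone suffices) where the paper leaves it implicit.
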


\begin{proof}
As in the proof of Lemma \ref{simplefree} with $\lambda:=(s,t)$, compute
\begin{equation*}\Hom_{\mathcal{C}_A}(\lambda\otimes A,M)=\Hom_\mathcal{C}(\lambda,M).\end{equation*}
By assumption and Lemma \ref{simplefree}, $\lambda\otimes A$ is simple, hence the result is proven since the right-hand side is nontrivial.
\end{proof}
\begin{cor}\label{goal}
For all $(s,t)\in\Lambda_0$ and $\{(s_i,t_i)\}_{i\in I}$, collections of simple summands of $M:=(s,t)\otimes A$ satisying the assumptions of Lemma \ref{simplefree},
\begin{equation*}\sum_{i\in I}\dim'(s_i,t_i)\leq\dim'(s,t).\end{equation*}
\end{cor}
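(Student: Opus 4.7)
The natural plan is to use Lemmas \ref{simplefree} and \ref{simplesummand} to realize each $(s_i,t_i)\otimes A$ as a simple right $A$-submodule of $M$ and then compare $\mathcal{C}$-dimensions of $M$ and the sum of these submodules. Concretely, for each $i\in I$, Lemma \ref{simplefree} ensures $(s_i,t_i)\otimes A$ is simple in $\mathcal{C}_A$, while Lemma \ref{simplesummand} produces an $A$-module injection $(s_i,t_i)\otimes A\hookrightarrow M$ whose restriction to $(s_i,t_i)$ recovers the chosen inclusion $(s_i,t_i)\hookrightarrow M$ of a simple summand.

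The key step is to argue that the assembled map $\bigoplus_{i\in I}(s_i,t_i)\otimes A\to M$ is still injective, so that the images form an internal direct sum of $A$-submodules of $M$. Because each source summand is simple in $\mathcal{C}_A$, any kernel is semisimple and of the form $\bigoplus_{j\in J}(s_j,t_j)\otimes A$ for some $J\subset I$. The restriction of the sum map to each factor $(s_i,t_i)\otimes A$ is nonzero by construction, so the kernel cannot contain any factor, forcing $J=\emptyset$. A separate subtlety arises if two of the simple $A$-modules $(s_i,t_i)\otimes A$ and $(s_j,t_j)\otimes A$ happen to be isomorphic in $\mathcal{C}_A$, so that the kernel might a priori pick up a ``diagonal'' copy; this is ruled out by the fact that the $(s_i,t_i)$ were chosen as distinct simple $\mathcal{C}$-summands of $M$, hence the chosen embeddings $(s_i,t_i)\hookrightarrow M$ are linearly independent in the semisimple category $\mathcal{C}$, and passing through the $\mathcal{C}_A$-injections gives linearly independent maps out of the direct sum.

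Once injectivity of the sum map is established, the corollary follows by a dimension count: taking $\mathcal{C}$-dimensions yields $\sum_{i\in I}\dim(s_i,t_i)\dim(A)\leq\dim_\mathcal{C}(M)=\dim(s,t)\dim(A)$, and dividing by $\dim(A)>0$ and scaling by the common denominator $\prod_{\alpha\succ0}[\langle\alpha,\rho\rangle]$ of every entry of the quantum Weyl dimension formula in $\mathcal{C}(\mathfrak{g},k)$ converts this into the desired $\sum_{i\in I}\dim'(s_i,t_i)\leq\dim'(s,t)$. The only delicate point in the whole argument is the direct-sum verification in the middle paragraph; the first and last steps are routine bookkeeping with the Frobenius adjunction and quantum Weyl dimensions.
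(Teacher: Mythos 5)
Your overall plan matches the paper's: apply Lemmas \ref{simplefree} and \ref{simplesummand} to realize each $(s_i,t_i)\otimes A$ as a simple $A$-submodule of $M$, then count dimensions and clear denominators. You also correctly locate the one delicate point---why the combined map $\bigoplus_i(s_i,t_i)\otimes A\to M$ is injective, i.e.\ why the free modules occur as distinct isotypic pieces of $M$ in $\mathcal{C}_A$. Your resolution of that point, however, is not valid. The claim that ``passing through the $\mathcal{C}_A$-injections gives linearly independent maps out of the direct sum'' does not follow from linear independence of the embeddings $(s_i,t_i)\hookrightarrow M$ in $\mathcal{C}$. The Frobenius adjunction $\Hom_{\mathcal{C}_A}(X\otimes A,M)\cong\Hom_\mathcal{C}(X,M)$ is an isomorphism of Hom-spaces for each fixed $X$; it does not transport injectivity of a combined morphism from $\mathcal{C}$ to $\mathcal{C}_A$. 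Concretely, if $(s_i,t_i)\otimes A\cong(s_j,t_j)\otimes A=:N$ is simple with multiplicity one in $M$ (as is forced, via the adjunction, whenever $(s_i,t_i)$ has multiplicity one in $M$ as a $\mathcal{C}$-object), then the two $\mathcal{C}_A$-embeddings produced by Lemma \ref{simplesummand} share the same image, and the combined map has a nonzero diagonal kernel---exactly the scenario you describe and then dismiss. Distinctness of the $(s_i,t_i)$ alone does not prevent this; for instance with the standard algebra $A=\1\oplus(k,0)$ in $\mathcal{C}(\mathfrak{so}_5,k)$ one has $(s,t)\otimes A\cong(k-s-t,t)\otimes A$ for distinct weights.

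What actually rules this out is the minimality hypothesis on $(\ell,m)$, which your argument never invokes. If $(s_i,t_i)\otimes A\cong(s_j,t_j)\otimes A$ with $(s_i,t_i)\neq(s_j,t_j)$, then $\Hom_\mathcal{C}\bigl((s_j,t_j)^\ast\otimes(s_i,t_i),A\bigr)\neq0$ forces some nontrivial summand of $A$ to occur in $(s_j,t_j)^\ast\otimes(s_i,t_i)$. But every dominant summand $\mu$ of that product satisfies $s_\mu+t_\mu\leq(s_i+t_i)+(s_j+t_j)<\ell+m$ for $\mathfrak{sl}_3$, $\mathfrak{so}_5$ (and $s_\mu+\tfrac{3}{2}t_\mu<\ell+\tfrac{3}{2}m$ for $\mathfrak{g}_2$), contradicting minimality---this is the same size estimate used in the proof of Lemma \ref{simplefree}. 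With that observation inserted, the free modules are pairwise non-isomorphic simple $\mathcal{C}_A$-objects, each occurring in $M$, and the dimension count goes through. The paper leaves this point implicit; your write-up would need to replace the linear-independence claim by this minimality argument to be correct.
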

\begin{proof}
Apply Lemma \ref{simplesummand} to $M$ along with each element of $\{(s_i,t_i)\}_{i\in I}$. For each $(s_i,t_i)$ we then have $(s_i,t_i)\otimes A\subset(s,t)\otimes A$.  Taking dimensions of the containment provides the inequality, then $\dim(A)$ can be divided out (since $\mathcal{C}(\mathfrak{g},k)$ is pseudo-unitary) and denominators cleared.
\end{proof}

%%%%%%%%%%%%%%%%%%%%%%%%%%%%%%%%%%%%%%%%%%%%%%%%%%%%%%%%%%%%%%%%%%%%%%%%%%%%%%%%%%%%%%%%%%%%%%%%%%%

\subsection{Quantum inequalities}\label{sec:inequalities}

\par Exact computations are often intractable with quantum analogs so we now collect a set of results that will be used frequently in the sequel to verify when inequalities of the type in Corollary \ref{goal} are true or false.  An illustration of the well-known formula for $q^n-q^{-n}$ in terms of sines when $q$ is a root of unity can be found in \cite[Figure 3]{schopieray2017} (there is an analogous formula in terms of cosine for $q^n+q^{-n}$).  Set $\varepsilon(\mathfrak{g},k)$ to be the denominator of $\ln q$ (see Figure \ref{fig:q}) so the following can be stated in the necessary generality.  
\begin{lem}\label{quantumtriangle}
If $n,m\in\mathbb{Z}_{\geq1}$, then $[n+m]\leq[n]+m$.
\end{lem}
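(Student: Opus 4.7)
The plan is to induct on $m$, reducing everything to the case $m=1$. The inductive step is automatic: if $[n+m] \leq [n] + m$, then applying the base case at the shifted index $n+m$ gives
\[
[n+m+1] \;\leq\; [n+m] + 1 \;\leq\; [n] + (m+1),
\]
so the substance of the lemma lies entirely in the inequality $[a+1] \leq [a] + 1$ for $a$ in the considered range.

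For the base case, the starting point is the one-line identity
\[
[a+1] \;=\; q\,[a] \;+\; q^{-a},
\]
obtained by multiplying $[a] = (q^a - q^{-a})/(q - q^{-1})$ by $q$, adding $q^{-a}$, and collecting terms into a single fraction. Since $q = e^{\pi i/\varepsilon}$ (with $\varepsilon := \varepsilon(\mathfrak{g},k)$ as prescribed by Figure~\ref{fig:q}) lies on the unit circle and $[a]$ is invariant under $q \leftrightarrow q^{-1}$, the $q \leftrightarrow q^{-1}$ image of this identity reads $[a+1] = q^{-1}[a] + q^a$. Averaging the two expressions yields
\[
[a+1] \;=\; \frac{q+q^{-1}}{2}\,[a] \;+\; \frac{q^a+q^{-a}}{2} \;=\; \cos(\pi/\varepsilon)\,[a] \;+\; \cos(a\pi/\varepsilon).
\]
Both coefficients on the right are bounded above by $1$, and $[a] \geq 0$ in the range under consideration (as noted in Section~\ref{lie}), so the right-hand side is at most $[a] + 1$.

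There is no serious obstacle here: the argument is essentially a three-line algebraic identity plus a pair of trivial cosine bounds. The only subtlety worth flagging is that the averaging step uses the reality of $[k]$, which in turn relies on the root-of-unity form of $q$ prescribed by Figure~\ref{fig:q}; this is already implicit in the standing hypothesis that $[m] \in \mathbb{R}_{>0}$ for all considered $m$.
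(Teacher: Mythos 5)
Your proof takes a genuinely different route from the paper's. The paper (for even $m$) writes $[n+m]$ and $[n]$ out as sums of $q$-powers, cancels the common terms, and is left with $[n+m]-[n]=2\sum_{i=1}^{m/2}\cos((n-1+2i)\pi/\varepsilon)\leq m$ by the triangle inequality, deferring odd $m$ to the reader. Your induction on $m$ reduces everything to the single-step estimate $[a+1]\leq[a]+1$, proved via the averaged identity $[a+1]=\cos(\pi/\varepsilon)\,[a]+\cos(a\pi/\varepsilon)$. The reduction is tidy and treats both parities at once.

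There is, however, a genuine gap. Your base-case justification bounds $\cos(\pi/\varepsilon)[a]$ by $[a]$, which requires $[a]\geq 0$; since $[a]=\sin(a\pi/\varepsilon)/\sin(\pi/\varepsilon)$, this fails as soon as $a$ exceeds $\varepsilon(\mathfrak{g},k)$. The lemma's hypothesis --- merely $n,m\in\mathbb{Z}_{\geq1}$ --- imposes no bound of the form $n+m<\varepsilon$, and your induction passes through every index $n,n+1,\dots,n+m-1$, so a single negative $[a]$ along the chain breaks the argument, whereas the paper's even-$m$ telescoping bound holds unconditionally. You do flag the positivity and ascribe it to ``the range under consideration'' and Section~\ref{lie}, but that passage asserts positivity only for the $q$-analogs appearing in $\dim(\lambda)$ for $\lambda\in\Lambda_0$, not for the indices at which Lemma~\ref{quantumtriangle} is later invoked, so that the inductive chain always stays below $\varepsilon$ is extra case-checking your proof currently omits. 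A self-contained repair is available: computing $[a+1]-[a]$ directly from the definition gives $(q^{a+1}+q^{-a})/(q+q^{-1}\cdot q)=(q^{a+1}+q^{-a})/(q+1)=\cos((2a+1)\pi/(2\varepsilon))/\cos(\pi/(2\varepsilon))$, and this is $\leq 1$ because $2a+1$ is odd while $4\varepsilon(\mathfrak{g},k)$ is even for every $q$ in Figure~\ref{fig:q}, so $(2a+1)\pi/(2\varepsilon)$ stays at distance at least $\pi/(2\varepsilon)$ from any multiple of $2\pi$. This closes the gap with no positivity hypothesis on $[a]$.
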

\begin{proof}
We will present a proof for even $m$, leaving the near-identical case of odd $m$ to the reader.  Carrying out the long division and simplifying yields
\begin{align*}
[n+m]-[n]=&\left(\dfrac{q^{n+m}-q^{-(n+m)}}{q-q^{-1}}\right)-\left(\dfrac{q^n-q^{-n}}{q-q^{-1}}\right) \\
	=&\left(q^{n+m-1}+q^{n+m-3}+\cdots+q^{-(n+m-3)}+q^{-(n+m-1)}\right) \\
	&-\left(q^{n-1}+q^{n-3}+\cdots+q^{-(n-3)}+q^{-(n-1)}\right) \\
=&\sum_{i=1}^{m/2}(q^{n-1+2i}+q^{-(n-1+2i)}) \\
=&2\sum_{i=1}^{m/2}\cos\left(\dfrac{(n-1+2i)\pi}{\varepsilon(\mathfrak{g},k)}\right) \\
\leq&m
\end{align*}
by the triangle inequality.
\end{proof}
\begin{cor}\label{quantumtrianglecor}
If $n\in\mathbb{Z}_{\geq1}$, then $[n]\leq n$.
\end{cor}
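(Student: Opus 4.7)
The plan is to derive this as a direct one-line consequence of Lemma \ref{quantumtriangle}, with the only subtlety being that the lemma requires both of its inputs to be at least $1$, so the case $n=1$ must be handled by hand.

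First I would observe that directly from the definition of the $q$-analog,
\[
[1]=\dfrac{q-q^{-1}}{q-q^{-1}}=1,
\]
which settles the base case $n=1$ (trivially $1\leq 1$). For $n\geq 2$, I would write $n=1+(n-1)$ and apply Lemma \ref{quantumtriangle} with the first argument equal to $1$ and the second argument equal to $m:=n-1\geq 1$. This yields
\[
[n]=[1+(n-1)]\leq [1]+(n-1)=1+(n-1)=n,
\]
completing the proof. Equivalently, one could present the argument as a formal induction on $n$, with base case $[1]=1$ and inductive step $[n+1]\leq [n]+1\leq n+1$ by applying the lemma with $m=1$.

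There is essentially no technical obstacle here; the statement is a straightforward specialization of the preceding lemma. The only point requiring minor care is remembering that the hypothesis of Lemma \ref{quantumtriangle} forces $m\geq 1$, so one cannot take $m=0$ and the $n=1$ case must be treated separately via the direct evaluation $[1]=1$.
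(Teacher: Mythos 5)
Your proof is correct and is exactly the argument the paper intends (the corollary is stated without a written proof, but it plainly follows by specializing Lemma \ref{quantumtriangle} to first argument $1$, with the $n=1$ base case handled by $[1]=1$). No gaps.
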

%\begin{proof}
%Set $n'=1$, $m'=n-1$, and apply Lemma \ref{quantumtriangle} noting that $[1]=1$.
%\end{proof}
\begin{lem}\label{lowerquantumbound}
If $n\in\mathbb{Z}_{\geq1}$ and $n\leq\dfrac{1}{2}\varepsilon(\mathfrak{g},k)$, then $[n]\geq\dfrac{1}{2}n$.
\end{lem}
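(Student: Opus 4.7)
The plan is to convert the algebraic expression for $[n]$ into a ratio of sines and then apply two standard trigonometric inequalities. By the sine formula referenced in the text (and illustrated in \cite[Figure 3]{schopieray2017}), for $q = \exp(\pi i/\varepsilon(\mathfrak{g},k))$ one has $q^m - q^{-m} = 2i\sin(m\pi/\varepsilon(\mathfrak{g},k))$, whence
\begin{equation*}
[n] \; = \; \frac{q^n - q^{-n}}{q - q^{-1}} \; = \; \frac{\sin(n\pi/\varepsilon(\mathfrak{g},k))}{\sin(\pi/\varepsilon(\mathfrak{g},k))}.
\end{equation*}
This reduces the claim to estimating the numerator from below and the denominator from above.

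For the numerator I would invoke Jordan's inequality: $\sin x \geq (2/\pi)x$ for all $0 \leq x \leq \pi/2$. The hypothesis $n \leq \tfrac{1}{2}\varepsilon(\mathfrak{g},k)$ is exactly what is needed to ensure the argument $n\pi/\varepsilon(\mathfrak{g},k)$ lies in the range of validity $[0,\pi/2]$, and then $\sin(n\pi/\varepsilon(\mathfrak{g},k)) \geq 2n/\varepsilon(\mathfrak{g},k)$. For the denominator, the elementary bound $\sin x \leq x$ applied at $x = \pi/\varepsilon(\mathfrak{g},k)$ gives $\sin(\pi/\varepsilon(\mathfrak{g},k)) \leq \pi/\varepsilon(\mathfrak{g},k)$. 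Dividing the two estimates,
\begin{equation*}
[n] \; \geq \; \frac{2n/\varepsilon(\mathfrak{g},k)}{\pi/\varepsilon(\mathfrak{g},k)} \; = \; \frac{2n}{\pi} \; \geq \; \frac{n}{2},
\end{equation*}
where the last inequality is just $2/\pi \geq 1/2$.

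There is no real obstacle here: once the trigonometric rewriting is in hand, Jordan's inequality does essentially all of the work, and positivity of $\sin(\pi/\varepsilon(\mathfrak{g},k))$ is already guaranteed by the discussion following Figure \ref{fig:q}. The only minor point worth flagging is that the bound obtained is slightly stronger than asked, since $2/\pi \approx 0.637$; a sharper constant could be extracted with little effort, but the stated form $[n] \geq n/2$ is all that is needed in the sequel.
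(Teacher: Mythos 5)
Your proof is correct and follows essentially the same strategy as the paper: rewrite $[n]$ as $\sin(n\pi/\varepsilon(\mathfrak{g},k))/\sin(\pi/\varepsilon(\mathfrak{g},k))$, lower-bound the numerator and upper-bound the denominator by elementary trigonometric inequalities, then use $n\leq\tfrac{1}{2}\varepsilon(\mathfrak{g},k)$ to finish. The only difference is cosmetic: you invoke Jordan's inequality $\sin x\geq(2/\pi)x$ on $[0,\pi/2]$, while the paper uses the quadratic bound $\sin x\geq x(1-x/\pi)$ on $[0,\pi]$ and applies the hypothesis at the end to get $1-n/\varepsilon(\mathfrak{g},k)\geq1/2$; both yield the stated bound with room to spare.
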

\begin{proof}
Note that
\begin{equation*}[n]=\dfrac{\sin\left(\dfrac{n\pi}{\varepsilon(\mathfrak{g},k)}\right)}{\sin\left(\dfrac{\pi}{\varepsilon(\mathfrak{g},k)}\right)}.\end{equation*}
We have $0\leq n\leq\varepsilon(\mathfrak{g},k)$ by assumption so we may use the inequalities $\sin(x)\geq x(1-x/\pi)$ (for $0\leq x\leq\pi$) and $1/\sin(x)\geq1/x$ (for $x>0$) to yield
\begin{align*}
[n]&\geq\left(\dfrac{\varepsilon(\mathfrak{g},k)}{\pi}\right)\left(\dfrac{n\pi}{\varepsilon(\mathfrak{g},k)}\right)\left(1-\dfrac{n}{\varepsilon(\mathfrak{g},k)}\right) \\
&=n\left(1-\dfrac{n}{\varepsilon(\mathfrak{g},k)}\right) \\
&\geq\dfrac{1}{2}n.
\end{align*}
\end{proof}

%%%%%%%%%%%%%%%%%%%%%%%%%%%%%%%%%%%%%%%%%%%%%%%%%%%%%%%%%%%%%%%%%%%%%%%%%%%%%%%%%%%%%%%%%%%%%%%%%%%

\subsection{Exceptional algebras}

\par In \cite{KiO}, connected \'etale algebras in $\mathcal{C}(\mathfrak{sl}_2,k)$ are organized into an ADE classification scheme paralleling the classification of simply-laced Dynkin diagrams.  The connected \'etale algebra of ``type A'' is the trivial one given by the unit object $\1\in\mathcal{C}(\mathfrak{sl}_2,k)$ (Example \ref{twotwotwo}).  Those connected \'etale algebras of ``type D'' arise at even levels in the following manner.  The fusion subcategory $\mathcal{C}(\mathfrak{sl}_2,2k)_\text{pt}\subset\mathcal{C}(\mathfrak{sl}_2,2k)$ generated by invertible objects is equivalent to $\Rep(\mathbb{Z}/2\mathbb{Z})$ and connected \'etale algebras in $\Rep(\mathbb{Z}/2\mathbb{Z})$ are in one-to-one correspondence with subgroups of $\mathbb{Z}/2\mathbb{Z}$ as the additional cohomological data from \cite[Theorem 3.1]{Ostrikdouble} is trivial for cyclic groups.  ``Type A'' algebras correspond to the trivial subgroup in the ``type D'' construction, so we will refer to both types as \emph{standard} in this exposition, and any algebra that does not arise from this construction as \emph{exceptional}.

\begin{example}\label{ex:sun}
Extending the notation from Section \ref{sec:a2}, simple objects of $\mathcal{C}(\mathfrak{sl}_n,nk)$ for $k\in\mathbb{Z}_{\geq1}$ are enumerated by positive integer $(n-1)$-tuples $(s_1,s_2,\ldots,s_{n-1})$ such that $s_1+s_2+\cdots+s_{n-1}\leq nk$.  The fusion subcategory $\mathcal{C}(\mathfrak{sl}_n,nk)_{\text{pt}}\simeq\Rep(\mathbb{Z}/n\mathbb{Z})$ has simple objects $(s_1,s_2,\ldots,s_{n-1})$ such that $s_i=nk$ and $s_j=0$ for all $j\neq i$, along with the trivial object.  Standard connected \'etale algebras in $\mathcal{C}(\mathfrak{sl}_n,nk)$ are again in one-to-one correspondence with subgroups of $\mathbb{Z}/n\mathbb{Z}$.  All exceptional connected \'etale algebras in $\mathcal{C}(\mathfrak{sl}_2,k)$ are succinctly listed in \cite[Table 1]{KiO}, while all exceptional connected \'etale algebras in $\mathcal{C}(\mathfrak{sl}_3,k)$ are listed using modular invariants \cite[Equations 2.7d,2.7e,2.7g]{gannon} at levels $k=5,9,21$.  The theory of conformal embeddings provides examples of exceptional connected \'etale algebras in $\mathcal{C}(\mathfrak{sl}_4,k)$ at levels $k=4,6,8$, which are described in detail in \cite{coq1}.  Although there is no explicit proof in the current literature that there are even finitely many exceptional connected \'etale algebras in $\mathcal{C}(\mathfrak{sl}_4,k)$, it is presumed based on computational evidence that the aforementioned list is exhaustive.
\end{example}

\begin{example}\label{geetoo}
There are no nontrivial standard connected \'etale algebras in $\mathcal{C}(\mathfrak{g}_2,k)$ since $\mathcal{C}(\mathfrak{g}_2,k)_\text{pt}\simeq\Vep$, but there are two standard connected \'etale algebras in $\mathcal{C}(\mathfrak{so}_5,2k)$ since $\mathcal{C}(\mathfrak{so}_5,2k)_\text{pt}\simeq\Rep(\mathbb{Z}/2\mathbb{Z})$ corresponding to $(0,0)$ and $(0,0)\oplus(k,0)$.  For odd levels $k$, $\theta(k,0)=-1$ and so by Lemma \ref{twistisone}, $(0,0)\oplus(k,0)$ does not have the structure of a connected \'etale algebra.  As in Example \ref{ex:sun}, the theory of conformal embeddings provides examples of exceptional connected \'etale algebras in $\mathcal{C}(\mathfrak{g}_2,k)$ at levels $k=3,4$ and $\mathcal{C}(\mathfrak{so}_5,k)$ at levels $k=2,3,7,12$, which are described in detail in \cite{coq2}.
\end{example}

%%%%%%%%%%%%%%%%%%%%%%%%%%%%%%%%%%%%%%%%%%%%%%%%%%%%%%%%%%%%%%%%%%%%%%%%%%%%%%%%%%%%%%%%%%%%%%%%%%%

\subsection{Main theorem and proof outline}\label{outline}

\begin{theo}\label{thebigone}
If $\mathfrak{g}$ is of rank 2, there exist finitely many levels $k\in\mathbb{Z}_{\geq1}$ such that $\mathcal{C}(\mathfrak{g},k)$ contains an exceptional connected \'etale algebra.
\end{theo}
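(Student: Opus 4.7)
The plan is to treat the three rank 2 simple Lie algebras $A_2 = \mathfrak{sl}_3$, $B_2 = \mathfrak{so}_5$, and $G_2 = \mathfrak{g}_2$ case by case in Sections \ref{a2}--\ref{g2}, applying a common technique built out of the machinery of Section \ref{machinery}. In each case I would begin by assuming for contradiction that $A \in \mathcal{C}(\mathfrak{g}, k)$ is an exceptional connected \'etale algebra at some level $k$, and fix its minimal nontrivial simple summand $(\ell, m)$ in the sense of Section \ref{machinerya}. By Lemma \ref{twistisone}, $\theta(\ell, m) = \text{id}_{(\ell,m)}$, which via the explicit twist formulas of Section \ref{lie} translates into an arithmetic constraint linking $\ell$, $m$, and $k$. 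These constraints severely restrict the location of $(\ell, m)$ within the Weyl alcove $\Lambda_0$ and, crucially, force $(\ell, m)$ to grow with $k$.

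The heart of the argument is to apply Corollary \ref{goal} to carefully chosen test weights $(s, t)$, typically built from $(\ell, m)$. The tensor product $(s, t) \otimes A$ decomposes as a right $A$-module, and because $(\ell, m) \subset A$ it contains $(s, t) \otimes (\ell, m)$ as a summand. Using the geometric interpretation of the quantum Racah formula from Section \ref{sec:geometry} together with Lemma \ref{fusion}, I would identify a large family of weights $(s_i, t_i)$ that are forced simple summands of $(s, t) \otimes A$ and that satisfy the smallness hypothesis of Lemma \ref{simplefree}, namely $2(s_i + t_i) < \ell + m$ (respectively $2(s_i + (3/2) t_i) < \ell + (3/2) m$ for $\mathfrak{g}_2$). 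Corollary \ref{goal} then yields a dimension inequality
\begin{equation*}
\sum_{i} \dim'(s_i, t_i) \;\leq\; \dim'(s, t).
\end{equation*}

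To extract a contradiction I would bound the left-hand side from below using Lemma \ref{lowerquantumbound} and the right-hand side from above using Lemma \ref{quantumtriangle} and Corollary \ref{quantumtrianglecor}. Since the cardinality of the forced family scales with $(\ell, m)$ while $\dim'(s, t)$ is controlled by a polynomial in its coordinates, the inequality must fail once $k$ exceeds an explicit threshold. Running this analysis separately for $\mathfrak{sl}_3$, $\mathfrak{so}_5$, and $\mathfrak{g}_2$ (with the threshold computed in each case) completes the proof.

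The main obstacle is the situation where $(\ell, m)$ lies close to one of the walls $T_0, T_1, T_2$ of $\Lambda_0$. Near such walls the shifted convex hull $\overline{\Pi}(\lambda : \gamma)$ of a test weight protrudes out of $\Lambda_0$ and is folded back in by non-trivial elements of the affine Weyl group $\mathfrak{W}_0$, and this cancellation shrinks the collection of summands that Lemma \ref{fusion} guarantees, potentially invalidating the inequality for a given choice of $(s, t)$. The analysis therefore has to split into subcases according to the position of $(\ell, m)$ relative to each wall, with the twist arithmetic overlaid on top of this geometric bookkeeping, and this is where the $B_2$ and $G_2$ arguments become technically demanding.
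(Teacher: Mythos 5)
Your proposal tracks the paper's argument closely in outline: fix a minimal nontrivial summand $(\ell,m)$ of the putative algebra $A$, invoke Lemma \ref{twistisone} to constrain $(\ell,m)$ and tie it arithmetically to $k$, feed carefully chosen test weights into Corollary \ref{goal} via the geometric Racah picture and Lemma \ref{fusion}, squeeze with the quantum inequalities of Section \ref{sec:inequalities}, bound $x$ (hence $(\ell,m)$), and finally bound $k$. The case split by proximity to the walls is also exactly where the work lives. So the skeleton is right.

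There is, however, a genuine gap in the mechanism you propose for forcing the contradiction. You assert that ``the cardinality of the forced family scales with $(\ell,m)$'' and that this alone overwhelms the polynomial bound on $\dim'(s,t)$. That heuristic is accurate for $\mathfrak{sl}_3$ (where a chain $(i,i)$, $0\le i\le\lfloor x/2\rfloor$, lies in $\Pi((m,\ell):(\ell,m))$ and a Faulhaber sum produces a quartic against a cubic) and for exactly one of the four $\mathfrak{so}_5$ cases ($\ell=0$). But in the near-wall cases — which you correctly identify as the obstacle — the constraint $2(s_i+t_i)<\ell+m$ intersected with $\Pi(\lambda:\gamma)$ typically yields only \emph{two} admissible summands, not a growing family. (For instance, with $m=0$ and test weight $(\lambda,0)$, $\lambda\approx x+3$, the only admissible summands near $(\ell-\lambda,0)$ are $(\ell-\lambda,0)$ and $(\ell-\lambda,2)$; in $\mathfrak{g}_2$ every case produces just two.) In those cases a naive degree count of the two sides is inconclusive: after Lemma \ref{lowerquantumbound} deflates each left-hand factor by $1/2$, both sides are cubics and the crude coefficient comparison does not close. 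What actually works — and what is missing from your plan — is the sharper use of Lemma \ref{quantumtriangle}, $[n+m]\le[n]+m$, to write the right-hand side as the \emph{same leading polynomial} as the larger of the two summands plus a strictly lower-degree correction; subtracting that leading term on both sides then pits a cubic (the other summand) against a quadratic, which fails for large $x$. You list Lemma \ref{quantumtriangle} among your tools but bundle it with Corollary \ref{quantumtrianglecor} as a generic upper bound, which obscures its essential role. Without the leading-term subtraction step, your argument would stall precisely in the subcases you flag as hard, including all of $\mathfrak{g}_2$. Relatedly, the test weight must be chosen within a \emph{bounded} shift of $x$ (as the paper does, $\lambda\in\{x+1,\dots,x+4\}$) so that this cancellation is available; ``built from $(\ell,m)$'' is too loose a prescription to guarantee it.
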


\par The proof of this result is contained in Sections \ref{a2}-\ref{g2} but illustrated below in the following example for $\mathfrak{sl}_2$.  A summary of the explicit bounds obtained can be found in Section \ref{conclusion}.

\begin{example}[$\mathcal{C}(\mathfrak{sl}_2,k)$]

\par If $A$ is an exceptional connected \'etale algebra in $\mathcal{C}(\mathfrak{sl}_2,k)$ with minimal nontrivial summand $(\ell)$, Lemma \ref{twistisone} applied to the twist formula in Section \ref{sec:a1} implies $(\ell)$ is in the root lattice, i.e. $\ell$ is even, say $\ell=2m$ for some $m\in\mathbb{Z}_{\geq1}$ and $2m<k$.  Explicit fusion rules for $\mathcal{C}(\mathfrak{sl}_2,k)$ are well-known \cite[Section 2.8]{DMNO}, and we see that $(m+3)\otimes(2m)$ contains summands $(m-1)$ and $(m-3)$ provided $3\leq m<k$.  Moreover Corollary \ref{goal} then implies
\begin{align}
&&[m]+[m-2]&<[m+4]\label{sl2ineq1}\\
&&&<[m]+4 \label{sl2ineq2}\\
\Rightarrow&&\dfrac{1}{2}(m-2)&<4\label{sl2ineq}
\end{align}
where (\ref{sl2ineq2}) results from applying Corollary \ref{quantumtrianglecor} to the right-hand side of (\ref{sl2ineq1}) and (\ref{sl2ineq}) results from applying Lemma \ref{lowerquantumbound} to the left-hand side of (\ref{sl2ineq2}) which is justified because $2m<k$ implies $m-2\leq(1/2)(k+2)$.  The inequality in (\ref{sl2ineq}) is false for $m>9$.  Moreover $\theta(\ell)=1$ by Lemma \ref{twistisone} and so $m(m+1)-2\geq k$ by analyzing the argument of the twist formula in Section \ref{sec:a1} which implies $k\leq88$ if $m\leq9$.
\end{example}
\begin{note}
It is possible to show from the definition of $[m]$ that the inequality in (\ref{sl2ineq1}) is false in a more restricted setting: $m>5$, which then implies $k\leq28$.  But there exists an exceptional connected \'etale algebra in $\mathcal{C}(\mathfrak{sl}_2,28)$ corresponding to the object $(0)\oplus(10)\oplus(18)\oplus(28)$ (type $E_8$ in the ADE classification \cite[Section 6]{KiO}) and so this bound is tight.  Even for Lie algebras of rank 2, computing precisely when such an inequality is true becomes unrealistically complex.  For the purposes of Theorem \ref{thebigone} \emph{any} bound will suffice.
\end{note}

%%%%%%%%%%%%%%%%%%%%%%%%%%%%%%%%%%%%%%%%%%%%%%%%%%%%%%%%%%%%%%%%%%%%%%%%%%%%%%%%%%%%%%%%%%%%%%%%%%%

%%%%%%%%%%%%%%%%%%%%%%%%%%%%%%%%%%%%%%%%%%%%%%%%%%%%%%%%%%%%%%%%%%%%%%%%%%%%%%%%%%%%%%%%%%%%%%%%%%%

%%%%%%%%%%%%%%%%%%%%%%%%%%%%%%%%%%%%%%%%%%%%%%%%%%%%%%%%%%%%%%%%%%%%%%%%%%%%%%%%%%%%%%%%%%%%%%%%%%%

\section{Proof of Theorem \ref{thebigone}: $\mathcal{C}(\mathfrak{sl}_3,k)$}\label{a2}

\par Let $A$ be an exceptional connected \'etale algebra in $\mathcal{C}(\mathfrak{sl}_3,k)$ with minimal nontrivial summand $(\ell,m)$ (i.e. $\ell+m$ is minimal).  Using duality ($(\ell,m)^\ast=(m,\ell)$, \cite[Corollary 3.2.1]{schopieray2017}) and rotation of $\Lambda_0$ by 120 degrees (tensoring with $(0,k)$), every $(\ell,m)\in\Lambda_0$ is conjugate to one $(\ell',m')$ such that $m'\leq\ell'\leq k/2$.  In what follows, the summands of $(\ell,m)^\ast\otimes(\ell,m)=(m,\ell)\otimes(\ell,m)$ will be computed and these summands are invariant under duality and rotation.  We will show $\ell'+m'$ is bounded for such a conjugate.  To do so we claim if $m\leq\ell\leq k/2$, then
\begin{equation}
\bigoplus_{i=0}^{\lfloor x/2\rfloor}(i,i)\subset(m,\ell)\otimes(\ell,m).\label{sl3sum}\end{equation}
The set $\overline{\Pi}(m,\ell:\ell,m)$, illustrated by example in Figure \ref{fig:su3} (refer to Section \ref{sec:geometry} for descriptions of the notation and visualization used), is a hexagon (triangle in the degenerate case $m=0$) with vertex $(0,0)$ and circumcenter $(\ell,m)$.  In particular $(i,i)\in \Pi(m,\ell:\ell,m)$ for $0\leq i\leq\lfloor x/2\rfloor$ (black nodes in Figure \ref{fig:su3}).  The angles formed between $\overline{\Pi}(m,\ell:\ell,m)$ and $T_1,T_2$ are 30 degrees when they exist.  Therefore, when folded over $T_1,T_2$, the edges of $\overline{\Pi}(m,\ell:\ell,m)$ containing $(0,0)$ are parallel to the line formed by the weights $(i,i)$, $0\leq i\leq\lfloor x/2\rfloor$, implying $\tau_j(\mu)\neq(i,i)$ for any $i\geq0$ and $j=1,2$.  Furthermore $m\leq\ell\leq k/2$ ensures there is no contribution from $\tau_0$ to $N_{(m,\ell),(\ell,m)}^{(i,i)}$ for any of the desired summands.  Lemma \ref{fusion} then implies containment (\ref{sl3sum}).
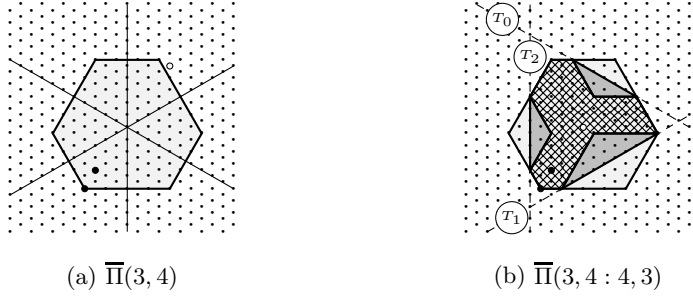
\begin{figure}[H]
\centering
\begin{subfigure}{.5\textwidth}
  \centering
\begin{tikzpicture}[scale=0.2]
\draw[fill=black!5,thick] (2*1.4142,5*0.8165) -- (-1*1.414,5*0.8165) -- (-3*1.414,-1*0.8165) -- (-1.5*1.414,-5.5*0.8165) -- (2.5*1.414,-5.5*0.8165) -- (4*1.414,-1*0.8165) -- cycle;
\draw[thin] (-5*1.4142,-6*0.8165) -- (5.5*1.4142,4.5*0.8165);
\draw[thin] (-5*1.4142,5*0.8165) -- (5.5*1.4142,-5.5*0.8165);
\draw[thin] (0.5*1.4142,9.5*0.8165) -- (0.5*1.4142,-9*0.8165);
\foreach \x in {-5,-4,...,5} {
	\foreach \y in {-9,-8,...,9} {
        \node at (\x*1.4142,\y*0.8165) {$\cdot$};
    		}
		};
\foreach \x in {-5,-4,...,5} {
	\foreach \y in {-9,-8,...,9} {
        \node at (\x*1.4142+0.7071,\y*0.8165+0.40825) {$\cdot$};
    		}
		};
\draw[fill=black] (-1.5*1.414,-5.5*0.8165) circle (0.2);
\draw[fill=black] (-1*1.414,-4*0.8165) circle (0.2);
\draw[fill=white] (2.5*1.4142,4.5*0.8165) circle (0.2);
\end{tikzpicture}
  \caption{$\overline{\Pi}(3,4)$}
  \label{fig:sl3123}
\end{subfigure}%
\begin{subfigure}{.5\textwidth}
  \centering
\begin{tikzpicture}[scale=0.2]
\draw[fill=black!5,thick] (2*1.4142,5*0.8165) -- (-1*1.414,5*0.8165) -- (-3*1.414,-1*0.8165) -- (-1.5*1.414,-5.5*0.8165) -- (2.5*1.414,-5.5*0.8165) -- (4*1.414,-1*0.8165) -- cycle;
\draw[fill=black!25,thick] (-2*1.414,-4*0.8165) -- (-2*1.414,2*0.8165) -- (-1*1.414,-1*0.8165) -- cycle;
\draw[fill=black!25,thick] (0*1.414,5*0.8165) -- (3*1.414,2*0.8165) -- (1*1.414,2*0.8165) -- cycle;
\draw[fill=black!25,thick] (-0.5*1.414,-5.5*0.8165) -- (1*1.414,-1*0.8165) -- (4*1.414,-1*0.8165) -- cycle;
\draw[pattern=crosshatch,thick] (-2*1.414,-4*0.8165) -- (-1*1.414,-1*0.8165) -- (-2*1.414,2*0.8165) -- (-1*1.414,5*0.8165)  -- (0*1.414,5*0.8165) -- (1*1.414,2*0.8165) -- (3*1.414,2*0.8165) -- (4*1.414,-1*0.8165) -- (1*1.414,-1*0.8165) -- (-0.5*1.414,-5.5*0.8165) -- (-1.5*1.414,-5.5*0.8165)  --  cycle;
\foreach \x in {-5,-4,...,5} {
	\foreach \y in {-9,-8,...,9} {
        \node at (\x*1.4142,\y*0.8165) {$\cdot$};
    		}
		};
\foreach \x in {-5,-4,...,5} {
	\foreach \y in {-9,-8,...,9} {
        \node at (\x*1.4142+0.707,\y*0.8165+0.40825) {$\cdot$};
    		}
		};
\draw[fill=white] (0.5*1.414,-0.5*0.8165) circle (0.2);
\draw[fill=black] (-1.5*1.414,-5.5*0.8165) circle (0.2);
\draw[fill=black] (-1*1.414,-4*0.8165) circle (0.2);
\draw[dashed] (-4*1.4142,-9*0.8165) -- node[circle,draw=black,pos = 0.12,inner sep=1pt,fill=white,solid,thin] {\tiny$T_1$} (5.5*1.4142,0.5*0.8165);
\draw[dashed] (-2*1.4142,9.5*0.8165) -- node[circle,draw=black,pos= 0.23,inner sep=1pt,fill=white,solid,thin] {\tiny$T_2$} (-2*1.4142,-9*0.8165);
\draw[dashed] (-4.5*1.4142,9.5*0.8165) -- node[circle,draw=black,pos = 0.12,inner sep=1pt,fill=white,solid,thin] {\tiny$T_0$} (5.5*1.4142,-0.5*0.8165);
\end{tikzpicture}
  \caption{$\overline{\Pi}(3,4:4,3)$}
  \label{fig:sl3321}
\end{subfigure}
\caption{$(3,4)\otimes(4,3)\in\mathcal{C}(\mathfrak{sl}_3,12)$}
\label{fig:su3}
\end{figure}
By Corollary \ref{goal}, containment (\ref{sl3sum}) implies
\begin{equation}
\sum_{i=0}^{\lfloor x/2\rfloor}[i+1]^2[2(i+1)]\leq[\ell+1][m+1][\ell+m+2],\label{sl3firsteq}
\end{equation}
while applying Corollary \ref{quantumtrianglecor} to the right-hand side of (\ref{sl3firsteq}) and Lemma \ref{lowerquantumbound} to the left-hand side of (\ref{sl3firsteq}) (which is applicable since $i\leq x/2$ implies $2(i+1)\leq x+2<k+3$) yields
\begin{equation}
\sum_{i=0}^{\lfloor x/2\rfloor}\left(\dfrac{1}{4}\right)(i+1)^2\left(\dfrac{1}{2}\right)2(i+1)\leq(\ell+1)(m+1)(\ell+m+2). \label{sl3sec}
\end{equation}
Furthermore we re-index the left-hand side of (\ref{sl3sec}), and bound each of the factors on the right-hand side of (\ref{sl3sec}) in terms of $x$ to produce
\begin{equation}
\frac{1}{4}\sum_{i=1}^{\lfloor x/2\rfloor+1}i^3\leq(2x+2)(x+2)(2x+4).\label{faul1}
\end{equation}
Now to eliminate the sum we proceed by parity: if $x$ is even $\lfloor x/2\rfloor+1=x/2+1$ and if $x$ is odd $\lfloor x/2\rfloor+1=x/2+1/2$.  Then using Faulhaber's formula (refer to the introduction of \cite{knuth} for a brief history and statement of this formula) on the left-hand side of (\ref{faul1}) implies the inequalities
\begin{align*}
(x\text{ even})&&\frac{1}{256}(x+2)^2(x+4)^2&\leq(2x+2)(x+2)(2x+4),\text{ and}\\
(x\text{ odd})&&\frac{1}{256}(x+1)^2(x+3)^2&\leq(2x+2)(x+2)(2x+4).
\end{align*}
The first inequality is true for even $x$ such that $x<1017$ while the second is true for odd $x$ such that $x<1021$.

\par Lemma \ref{twistisone} implies $\theta(\ell,m)=1$ for our original minimal nontrivial summand of $A$.  One consequence is that $(\ell,m)$ is contained in the root lattice inside $\Lambda_0$ (i.e. $\ell\equiv m\pmod{3}$).  Another consequence is that $\theta(\ell',m')$, the twist of its conjugate, is a third root of unity.  To see this note that $\theta(0,k)$ is a third root of unity depending on the level $k$ modulo 3 and $(\ell,m)$ is in the centralizer of the pointed subcategory generated by the simple object $(0,k)$ (refer to the proof of \cite[Proposition 3.4.1]{schopieray2017}).  Our claim then follows from the ribbon axioms $\theta((0,k)\otimes(\ell,m))=\theta(\ell,m)\theta(0,k)$, and $\theta(\ell,m)=\theta(m,\ell)^{-1}$ \cite[Definition 8.10.1]{tcat}.  

\par Furthermore, $\theta(\ell',m')$ being a third root of unity forces $(\ell'+3\ell'+\ell'm'+3m'+m'^2)/(k+3)\in\mathbb{Z}$ and moreover $(\ell'^2+3\ell'+\ell'm'+3m'+m'^2)-3\geq k$.  The left-hand side of this inequality is maximized (as a real symmetric function of $\ell',m'\geq0$) when $\ell'=m'$, which by the above argument can be no larger than $x\leq1019$.  Hence we have $k\leq3121194$.  In summary any exceptional connected \'etale algebra in $\mathcal{C}(\mathfrak{sl}_3,k)$ must have a minimal summand which is conjugate to $(\ell',m')$ such that $\ell'+m'\leq2038$ and must occur at a level $k\leq3121194$, proving Theorem \ref{thebigone} for $\mathcal{C}(\mathfrak{sl}_3,k)$.

%%%%%%%%%%%%%%%%%%%%%%%%%%%%%%%%%%%%%%%%%%%%%%%%%%%%%%%%%%%%%%%%%%%%%%%%%%%%%%%%%%%%%%%%%%%%%%%%%%%

%%%%%%%%%%%%%%%%%%%%%%%%%%%%%%%%%%%%%%%%%%%%%%%%%%%%%%%%%%%%%%%%%%%%%%%%%%%%%%%%%%%%%%%%%%%%%%%%%%%

%%%%%%%%%%%%%%%%%%%%%%%%%%%%%%%%%%%%%%%%%%%%%%%%%%%%%%%%%%%%%%%%%%%%%%%%%%%%%%%%%%%%%%%%%%%%%%%%%%%

\section{Proof of Theorem \ref{thebigone}: $\mathcal{C}(\mathfrak{so}_5,k)$}\label{b2}

\par Let $A$ be a connected \'etale algebra in $\mathcal{C}(\mathfrak{so}_5,k)$ with minimal nontrivial summand $(\ell,m)$ (i.e. $\ell+m$ is minimal) and let $x:=\lceil(1/2)(\ell+m)\rceil-1$, the greatest integer strictly less than the average of $\ell$ and $m$.  The quantity $x$ is crucial in the remainder of Section \ref{b2} as summands $(s,t)$ such that $s+t\leq x$ are precisely those which will satisfy the hypotheses of Lemma \ref{simplefree}.  We aim to provide an explicit bound on $x$ to subsequently produce a bound on the level $k$ for which such a connected \'etale algebra can exist.

\par Lemma \ref{twistisone} implies that $(\ell,m)$ lies in the root lattice (i.e. $m$ is even).  Our proof will be split into four cases (three of the four cases have an argument based on the parity of $\ell$), illustrated by example in Figure \ref{fig:cases}, based on the relative size of $m$ versus $x$: $m=0$ and $\ell<k-1$, $0\leq m-2\leq x$, $0\neq\ell\leq x<m-2$, and $\ell=0$ with $m<k$.  The case $(\ell,m)=(k,0)$ corresponds to either the standard connected \'etale algebra $(0,0)\oplus(k,0)$ (if $k$ is even; see Example \ref{geetoo}) or $A$ has a nontrivial minimal summand covered by another case.  In the case $(\ell,m)=(k-1,0)$, $\theta(k-1,0)=1$ if and only if $(k+2)(k-1)/(2(k+3))$ is an integer.  It can be easily verified that for $k\in\mathbb{Z}_{\geq1}$, $(k+2)(k-1)/(2(k+3))$ is an integer if and only if $k=1$.  Similarly $\theta(0,k)=1$ if and only if $k(k+4)/(k+3)$ is an integer which is likewise only the case when this integer is zero.  Moreover all possible $(\ell,m)$ will be discussed through these four cases.

\begin{figure}[H]
\centering
\begin{tikzpicture}[scale=0.2]
\draw[fill=gray!10,rounded corners=0.66mm] (-4.25*1.414,6.25*1.414) -- (-3.75*1.414,6.25*1.414) -- (-3.75*1.414,4.75*1.414)  -- (-4.25*1.414,4.75*1.414) -- cycle;
\draw[fill=gray!10,rounded corners=0.66mm] (-3.25*1.414,5.66*1.414) -- (-0.75*1.414,3.166*1.414) -- (-0.75*1.414,1.333*1.414)  -- (-3.25*1.414,3.833*1.414) -- cycle;
\draw[fill=gray!10,rounded corners=0.66mm] (-0.25*1.414,2.66*1.414) -- (1.25*1.414,1.166*1.414) -- (1.25*1.414,-0.666*1.414)  -- (-0.25*1.414,0.833*1.414) -- cycle;
\draw[fill=white!10,rounded corners=0.66mm] (1.75*1.414,0.25*1.414) -- (2.25*1.414,0.25*1.414) -- (2.25*1.414,-0.25*1.414)  -- (1.75*1.414,-0.25*1.414) -- cycle;
\foreach \x in {-6,-5,...,4} {
	\foreach \y in {-8,-7,...,10} {
        \node at (\x*1.4142,\y*1.4142) {$\cdot$};
    		}
		};
\foreach \x in {-6,-5,...,4} {
	\foreach \y in {-8,-7,...,9} {
        \node at (\x*1.4142+0.707,\y*1.4142+0.707) {$\cdot$};
    		}
		};
\draw[fill=black] (-4*1.414,6*1.414) circle (0.2);
\draw[fill=black] (-4*1.414,5*1.414) circle (0.2);
\draw[fill=black] (-3*1.414,5*1.414) circle (0.2);
\draw[fill=black] (-3*1.414,4*1.414) circle (0.2);
\draw[fill=black] (-2*1.414,4*1.414) circle (0.2);
\draw[fill=black] (-2*1.414,3*1.414) circle (0.2);
\draw[fill=black] (-1*1.414,3*1.414) circle (0.2);
\draw[fill=black] (-1*1.414,2*1.414) circle (0.2);
\draw[fill=black] (0*1.414,2*1.414) circle (0.2);
\draw[fill=black] (0*1.414,1*1.414) circle (0.2);
\draw[fill=black] (1*1.414,1*1.414) circle (0.2);
\draw[fill=black] (1*1.414,0*1.414) circle (0.2);
\draw[fill=black] (2*1.414,0*1.414) circle (0.2);

\draw[dashed,thick] (-4.5*1.414,-8*1.414) -- node[circle,draw=black,pos = 0.5,inner sep=1pt,fill=white,solid,thin] {\tiny$T_1$}  (-4.5*1.414,10*1.414);
\draw[dashed,thick] (-5*1.414,-8*1.414) -- node[circle,draw=black,pos = 0.5,inner sep=1pt,fill=white,solid,thin] {\tiny$T_2$}  (4.5*1.414,1.5*1.414);
\draw[dashed,thick] (4.5*1.414,0.5*1.414) -- node[circle,draw=black,pos = 0.5,inner sep=1pt,fill=white,solid,thin] {\tiny$T_0$}  (-5*1.414,10*1.414);
\end{tikzpicture}
\caption{Possible $(\ell,m)$ when $k=14$ and $x=5$}
\label{fig:cases}
\end{figure}
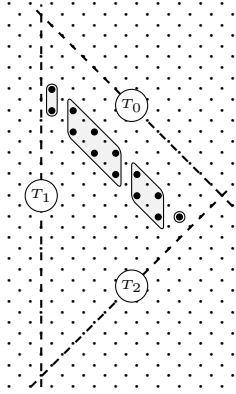

\vspace{-8 mm}

%%%%%%%%%%%%%%%%%%%%%%%%%%%%%%%%%%%%%%%%%%%%%%%%%%%%%%%%%%%%%%%%%%%%%%%%%%%%

\subsection{The case $m=0$ and $0<\ell<k-1$}\label{sog1}

\par Set $\lambda:=\ell-x+2$ so we have $\lambda=x+4$ if $\ell$ is even and $\lambda=x+3$ if $\ell$ is odd.  We claim that if $5\leq\ell<k-1$, then
\begin{equation}
(\ell-\lambda,0)\oplus(\ell-\lambda,2)\subset(\lambda,0)\otimes(\ell,0).\label{sl3caseoneineq}
\end{equation}
The set $\overline{\Pi}(\lambda,0)$, illustrated by example in Figure \ref{fig:test}, (refer to Section \ref{sec:geometry} for descriptions of the notation and visualization used) is a square with vertex $(-\lambda,0)$ and its three conjugates under the Weyl group.  In particular {$\Pi(\lambda,0:\ell,0)$} contains $(\ell-\lambda,0)$ and $(\ell-\lambda,2)$ provided $\ell\geq5$.  The reflection $\tau_1$ cannot contribute to $N_{(\lambda,0),(\ell,0)}^{(\ell-\lambda,2)}$ or $N_{(\lambda,0),(\ell,0)}^{(\ell-\lambda,0)}$ as $(\lambda,0)$ does not lie on $T_1$, nor does $\tau_0$ contribute by the assumption $\ell<k-1$.  There can be no contribution from $\tau_2$ as $\overline{\Pi}(\lambda,0:\ell,0)$ does not intersect $T_2$, thus Lemma \ref{fusion} implies (\ref{sl3caseoneineq}).
\begin{figure}[H]
\centering
\begin{subfigure}{.5\textwidth}
  \centering
\begin{tikzpicture}[scale=0.18]
\draw[fill=black!5,thick] (0*1.414,6*1.414) -- (6*1.414,0*1.414) -- (0*1.414,-6*1.414) -- (-6*1.414,0*1.414) -- cycle;
\foreach \x in {-6,-5,...,6} {
	\foreach \y in {-9,-8,...,8} {
        \node at (\x*1.4142,\y*1.4142) {$\cdot$};
    		}
		};
\foreach \x in {-7,-6,...,6} {
	\foreach \y in {-9,-8,...,8} {
        \node at (\x*1.4142+0.707,\y*1.4142+0.707) {$\cdot$};
    		}
		};

\draw (-6.5*1.414,0*1.414) -- (6.5*1.414,0*1.414);
\draw (0*1.414,-9*1.414) -- (0*1.414,8.5*1.414);
\draw (-6.5*1.414,-6.5*1.414) -- (6.5*1.414,6.5*1.414);
\draw (-6.5*1.414,6.5*1.414) -- (6.5*1.414,-6.5*1.414);
\draw[fill=white] (0*1.414,7*1.414) circle (0.3);
\draw[fill=black] (0*1.414,-6*1.414) circle (0.3);
\draw[fill=black] (1*1.414,-5*1.414) circle (0.3);
\end{tikzpicture}
  \caption{$\overline{\Pi}(6,0)$}
  \label{fig:sub}
\end{subfigure}%
\begin{subfigure}{.5\textwidth}
  \centering
\begin{tikzpicture}[scale=0.18]
\draw[fill=black!5,thick] (0*1.414,6*1.414) -- (6*1.414,0*1.414) -- (0*1.414,-6*1.414) -- (-6*1.414,0*1.414) -- cycle;
\draw[pattern=crosshatch,thick] (-0.5*1.414,3.5*1.414) -- (4*1.414,-2*1.414) -- (0*1.414,-6*1.414) -- (-0.5*1.414,-5.5*1.414) -- cycle;
\draw[fill=black!25,thick] (-0.5*1.414,3.5*1.414) -- (-0.5*1.414,1.5*1.414) -- (3.5*1.414,-2.5*1.414)-- (4.5*1.414,-1.5*1.414) -- cycle;
\draw[fill=black!25,thick] (-0.5*1.414,1.5*1.414) -- (3*1.414,-2*1.414) -- (-0.5*1.414,-5.5*1.414) -- cycle;
\foreach \x in {-6,-5,...,6} {
	\foreach \y in {-9,-8,...,8} {
        \node at (\x*1.4142,\y*1.4142) {$\cdot$};
    		}
		};
\foreach \x in {-7,-6,...,6} {
	\foreach \y in {-9,-8,...,8} {
        \node at (\x*1.4142+0.707,\y*1.4142+0.707) {$\cdot$};
    		}
		};
\draw[dashed,thick] (-0.5*1.414,-9*1.414) -- node[circle,draw=black,pos = 0.95,inner sep=1pt,fill=white,solid,thin] {\tiny$T_1$}  (-0.5*1.414,8.5*1.414);
\draw[dashed,thick] (-1*1.414,-9*1.414) -- node[circle,draw=black,pos = 0.6,inner sep=1pt,fill=white,solid,thin] {\tiny$T_2$}  (6.5*1.414,-1.5*1.414);
\draw[dashed,thick] (6.5*1.414,-3.5*1.414) -- node[circle,draw=black,pos = 0.8,inner sep=1pt,fill=white,solid,thin] {\tiny$T_0$}  (-5.5*1.414,8.5*1.414);
\draw[fill=white] (0*1.414,0*1.414) circle (0.3);
\draw[fill=black] (0*1.414,-6*1.414) circle (0.3);
\draw[fill=black] (1*1.414,-5*1.414) circle (0.3);
\end{tikzpicture}
  \caption{$\overline{\Pi}(6,0:7,0)$}
  \label{fig:sub1a}
\end{subfigure}
\caption{$(6,0)\otimes(7,0)\in\mathcal{C}(\mathfrak{so}_5,9)$}
\label{fig:test}
\end{figure}
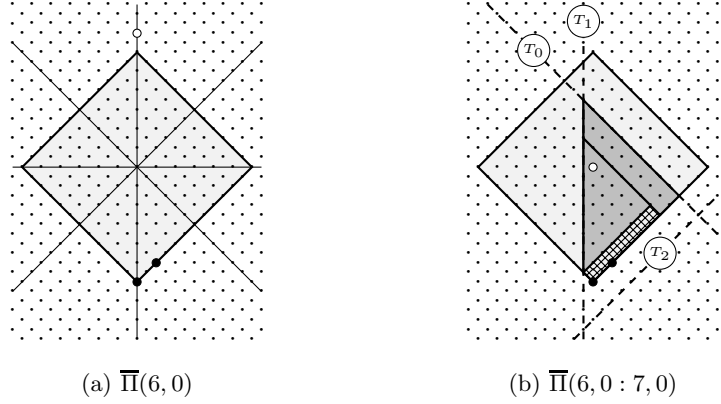

\par If $\ell$ is even, Corollary \ref{goal} applied to (\ref{sl3caseoneineq}) gives
\begin{align}
\hspace{-2mm}\dim'(x-2,0)+\dim'(x-2,2)&\leq[2x+10][2x+11][2x+12]\label{eq:8} \\
&\leq([2x-2]+12)([2x-1]+12)([2x]+12)\label{eq:100}
\end{align}
by applying Lemma \ref{quantumtriangle} to the right-hand side of (\ref{eq:8}).  Then expanding the product in (\ref{eq:100}) and subtracting the leading term (equal to $\dim'(x-2,0)$) from both sides yields
\begin{equation}
[3][2x-2][2x+4][2x+1]\leq24(6x^2+30x+55)\label{eq:9}
\end{equation}
using Corollary \ref{quantumtrianglecor} on the right-hand side to eliminate the quantum analogs.  Moreover, applying Lemma \ref{lowerquantumbound} to the left-hand side of (\ref{eq:9}) (which is justified since $x=(1/2)\ell-1$ implies $2(2x+4)\leq2(k+3)$) leaves the inequalities
\begin{align}
(\ell\text{ even})&&\dfrac{3}{4}(x-1)(2x+1)(x+2)&\leq24(6x^2+30x+55),\text{ and}\label{eq:10} \\
(\ell\text{ odd})&&\dfrac{3}{4}(x-1)(2x+1)(x+2)&\leq120(x^2+4x+6)\label{eq:11}
\end{align}
repeating the same process for $\ell$ odd. Inequality (\ref{eq:10}) is true for even $\ell$ with $x\leq98$ and inequality (\ref{eq:11}) is true for odd $\ell$ with $x\leq81$.  The former is a weaker bound on $\ell=2x+2\leq198$, which using $\theta(\ell,0)=1$ by Lemma \ref{twistisone} implies $(2\ell^2+6\ell)/(4(k+3))\in\mathbb{Z}$ and thus $k\leq(2(198)^2+6(198))/4-3=19896$.

%%%%%%%%%%%%%%%%%%%

%This is the case of the exceptional etale algebra of type E12 at level k=12

\subsection{The case $2\leq m\leq x+2$}\label{soo2}

Set $\lambda:=\ell+m-x$ so that $\lambda=x+1$ when $\ell$ is odd and $\lambda=x+2$ if $\ell$ is even.  We claim that for $2\leq m\leq x+2$,
\begin{equation}
(x,0)\oplus(x-2,2)\subset(\lambda,0)\otimes(\ell,m).\label{so5eq4}
\end{equation}
The set $\overline{\Pi}(\lambda,0)$, illustrated by example in Figure \ref{fig:test3}, is a square with vertex $(-\lambda,0)$ and its three conjugates under the Weyl group.  From the fact $m\geq2$ is even, the set $\Pi(\lambda,0:\ell,m)$ contains $(x,0)$ and $(x-2,2)$.  The square $\overline{\Pi}(\lambda,0:\ell,m)$ intersects $T_1$ at 45 degree angles, thus $(x,0)$ and $(x-2,2)$ lying on this intersecting edge implies there is no contribution to the desired fusion coefficients from $\tau_1$.  Reflection $\tau_0$ could only contribute if $(\ell,m)$ lies on $T_0$, and the assumption $m\leq x+2$ ensures there is no contribution from $\tau_2$ as well.  Lemma \ref{fusion} then implies containment (\ref{so5eq4}).

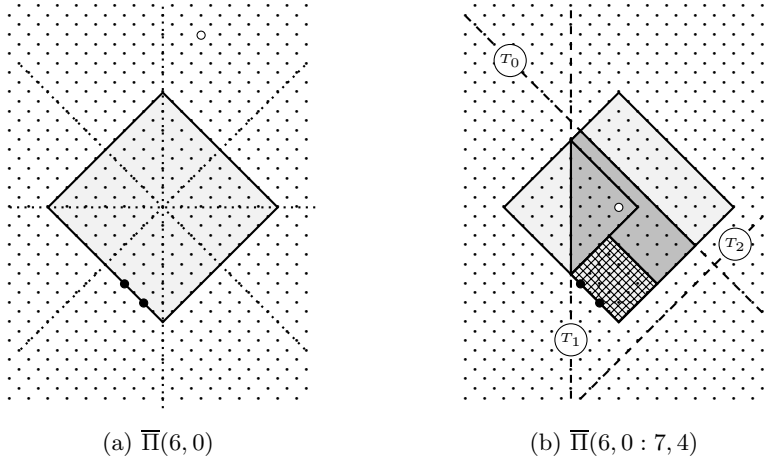
\begin{figure}[H]
\centering
\begin{subfigure}{.5\textwidth}
  \centering
\begin{tikzpicture}[scale=0.18]
\draw[fill=black!5,thick] (0*1.414,6*1.414) -- (6*1.414,0*1.414) -- (0*1.414,-6*1.414) -- (-6*1.414,0*1.414) -- cycle;
\foreach \x in {-8,-7,...,7} {
	\foreach \y in {-10,-9,...,10} {
        \node at (\x*1.4142,\y*1.4142) {$\cdot$};
    		}
		};
\foreach \x in {-8,-7,...,7} {
	\foreach \y in {-10,-9,...,10} {
        \node at (\x*1.4142+0.707,\y*1.4142+0.707) {$\cdot$};
    		}
		};

\draw[dotted,thick] (0:-8*1.414) -- (0:8*1.414);
\draw[dotted,thick] (90:-10.5*1.414) -- (90:10.5*1.414);
\draw[dotted,thick] (-45:-10.5*1.414) -- (-45:10.5*1.414);
\draw[dotted,thick] (45:-10.5*1.414) -- (45:10.5*1.414);
\draw[fill=white] (2*1.414,9*1.414) circle (0.3);
\draw[fill=black] (-1*1.414,-5*1.414) circle (0.3);
\draw[fill=black] (-2*1.414,-4*1.414) circle (0.3);
\end{tikzpicture}
  \caption{$\overline{\Pi}(6,0)$}
  \label{fig:sub1}
\end{subfigure}%
\begin{subfigure}{.5\textwidth}
  \centering
\begin{tikzpicture}[scale=0.18]
\draw[fill=black!5,thick] (0*1.414,6*1.414) -- (6*1.414,0*1.414) -- (0*1.414,-6*1.414) -- (-6*1.414,0*1.414) -- cycle;
\draw[fill=black!25,thick] (-2.5*1.414,-3.5*1.414) -- (1*1.414,0*1.414) -- (-2.5*1.414,3.5*1.414) -- cycle;
\draw[fill=black!25,thick] (-2.5*1.414,3.5*1.414) -- (-2*1.414,4*1.414) --(4*1.414,-2*1.414) --(2*1.414,-4*1.414) --(-0.5*1.414,-1.5*1.414) --(1*1.414,0*1.414) --cycle;
\draw[pattern=crosshatch,thick] (-2.5*1.414,-3.5*1.414) -- (-0.5*1.414,-1.5*1.414) -- (2*1.414,-4*1.414) -- (0*1.414,-6*1.414) -- cycle;
\foreach \x in {-8,-7,...,7} {
	\foreach \y in {-10,-9,...,10} {
        \node at (\x*1.4142,\y*1.4142) {$\cdot$};
    		}
		};
\foreach \x in {-8,-7,...,7} {
	\foreach \y in {-10,-9,...,10} {
        \node at (\x*1.4142+0.707,\y*1.4142+0.707) {$\cdot$};
    		}
		};
\draw[dashed,thick] (-2.5*1.414,-10*1.414) -- node[circle,draw=black,pos = 0.15,inner sep=1pt,fill=white,solid,thin] {\tiny$T_1$}   (-2.5*1.414,10.5*1.414);
\draw[dashed,thick] (-2*1.414,-10*1.414) -- node[circle,draw=black,pos = 0.85,inner sep=1pt,fill=white,solid,thin] {\tiny$T_2$}   (7.5*1.414,-0.5*1.414);
\draw[dashed,thick] (7.5*1.414,-5.5*1.414) -- node[circle,draw=black,pos = 0.85,inner sep=1pt,fill=white,solid,thin] {\tiny$T_0$}   (-8*1.414,10*1.414);
\draw[fill=white] (0*1.414,0*1.414) circle (0.3);
\draw[fill=black] (-1*1.414,-5*1.414) circle (0.3);
\draw[fill=black] (-2*1.414,-4*1.414) circle (0.3);
\end{tikzpicture}
  \caption{$\overline{\Pi}(6,0:7,4)$}
  \label{fig:sub3a}
\end{subfigure}
\caption{$(6,0)\otimes(7,4)\in\mathcal{C}(\mathfrak{so}_5,12)$}
\label{fig:test3}
\end{figure}

\par If $\ell$ is odd, Corollary \ref{goal} applied to (\ref{so5eq4}) gives
\begin{align}
&&\dim'(x,0)+\dim'(x-2,2)&\leq[2(x+2)][2(x+3)][2x+5] \label{eq:4} \\
&&&\leq([2x+2]+2)([2x+3]+2)([2x+4]+2)\label{eq:4b}
\end{align}
using Lemma \ref{quantumtriangle} on the right-hand side of (\ref{eq:4}).  Expanding the product on the right-hand side of (\ref{eq:4b}) and subtracting the leading term (equal to $\dim'(x,0)$) yields
\begin{equation}
[3][2(x-1)][2x+1][2(x+2)]\leq24(x+2)^2\label{so5eq10}
\end{equation}
using Corollary \ref{quantumtrianglecor} on the right-hand side.  Applying Lemma \ref{lowerquantumbound} to the left-hand side of (\ref{so5eq10}) is justified since $2(2x+4)=2(\ell+m+3)\leq2(k+3)$ and thus
\begin{align}
(\ell\text{ odd})&&\dfrac{3}{4}(x-1)(2x+1)(x+2)&\leq24(x+2)^2,\text{ and}\label{eq:so51} \\
(\ell\text{ even})&&\dfrac{3}{4}(x-1)(2x+1)(x+2)&\leq24(2x^2+10x+13)\label{eq:so52}
\end{align}
repeating the above process for $\ell$ even.  The inequality in (\ref{eq:so51}) is true for odd $\ell$ with $x\leq18$ while the inequality in (\ref{eq:so52}) is true for even $\ell$ with $x\leq35$.  Moreover $2\leq m\leq37$, $\ell+m\leq72$, and therefore $k\leq2625$ from Lemma \ref{twistisone} by maximizing $(2\ell^2+2\ell m+6\ell+m^2+4m)/4-3$ subject to these constraints as in the conclusion of Section \ref{sog1}.

%%%%%%%%%%%%%%%%%%%%%%%%%%%%%%%%%%%%%%%%%%%%%%%%%%%%%%%%%%%%

\subsection{The case $\ell=0$ and $m<k$}

\par We claim for $m\geq4$,
\begin{equation}
\bigoplus_{i=0}^{x-1}(i,0)\subset(0,m)\otimes(0,m).\label{so5eq2}
\end{equation}
The set $\overline{\Pi}(0,m)$, illustrated by example in Figure \ref{fig:test2}, is a square with vertex $(0,-m)$ and its three conjugates under the Weyl group.  In particular $\Pi(0,m:0,m)$ contains $(i,0)$ for $0\leq i\leq x-1$.  The angles formed between $T_0,T_2$ and $\overline{\Pi}(0,m:0,m)$ are 45 degrees, ensuring there is no contribution to the desired fusion coefficients from $\tau_0,\tau_2$; $\overline{\Pi}(0,m:0,m)$ does not intersect $T_1$ so there is no contribution from $\tau_1$ either.  Lemma \ref{fusion} then implies containment (\ref{so5eq2}).

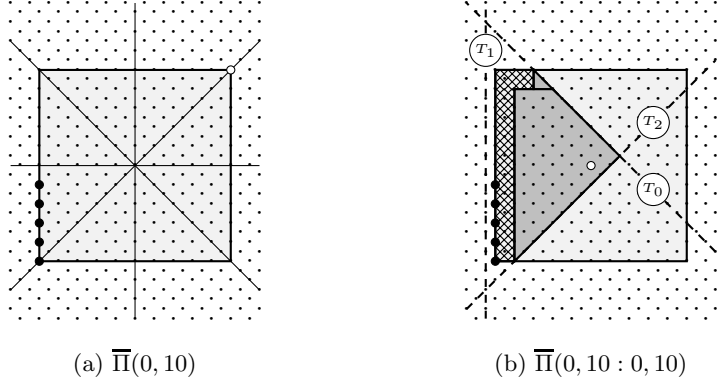
\begin{figure}[H]
\centering
\begin{subfigure}{.5\textwidth}
  \centering
\begin{tikzpicture}[scale=0.18]
\draw[fill=black!5,thick] (5*1.414,5*1.414) -- (5*1.414,-5*1.414) -- (-5*1.414,-5*1.414) -- (-5*1.414,5*1.414) -- cycle;
\foreach \x in {-6,-5,...,6} {
	\foreach \y in {-8,-7,...,8} {
        \node at (\x*1.4142,\y*1.4142) {$\cdot$};
    		}
		};
\foreach \x in {-7,-6,...,6} {
	\foreach \y in {-8,-7,...,8} {
        \node at (\x*1.4142+0.707,\y*1.4142+0.707) {$\cdot$};
    		}
		};

\draw[thin] (0*1.414,-8*1.414) -- (0*1.414,8.5*1.414);
\draw[thin] (-6.5*1.414,6.5*1.414) -- (6.5*1.414,-6.5*1.414);
\draw[thin] (-6.5*1.414,-6.5*1.414) -- (6.5*1.414,6.5*1.414);
\draw[thin] (-6.5*1.414,0*1.414) -- (6.5*1.414,0*1.414);
\draw[fill=white] (5*1.414,5*1.414) circle (0.3);
\draw[fill=black] (-5*1.414,-5*1.414) circle (0.3);
\draw[fill=black] (-5*1.414,-4*1.414) circle (0.3);
\draw[fill=black] (-5*1.414,-3*1.414) circle (0.3);
\draw[fill=black] (-5*1.414,-2*1.414) circle (0.3);
\draw[fill=black] (-5*1.414,-1*1.414) circle (0.3);
\end{tikzpicture}
  \caption{$\overline{\Pi}(0,10)$}
  \label{fig:sub1}
\end{subfigure}%
\begin{subfigure}{.5\textwidth}
  \centering
\begin{tikzpicture}[scale=0.18]
\draw[fill=black!5,thick] (5*1.414,5*1.414) -- (5*1.414,-5*1.414) -- (-5*1.414,-5*1.414) -- (-5*1.414,5*1.414) -- cycle;
\draw[fill=black!25,thick] (-4*1.414,4*1.414) -- (-2*1.414,4*1.414)-- (1.5*1.414,0.5*1.414) -- (-2*1.414,-3*1.414) -- (-4*1.414,-5*1.414)  -- cycle;
\draw[fill=black!25,thick] (-3*1.414,4*1.414) -- (-3*1.414,5*1.414) -- (-2*1.414,4*1.414) -- cycle;
\draw[pattern=crosshatch,thick] (-5*1.414,-5*1.414) -- (-4*1.414,-5*1.414) -- (-4*1.414,4*1.414) -- (-3*1.414,4*1.414) -- (-3*1.414,5*1.414) -- (-5*1.414,5*1.414) -- cycle;
\foreach \x in {-6,-5,...,6} {
	\foreach \y in {-8,-7,...,8} {
        \node at (\x*1.4142,\y*1.4142) {$\cdot$};
    		}
		};
\foreach \x in {-7,-6,...,6} {
	\foreach \y in {-8,-7,...,8} {
        \node at (\x*1.4142+0.707,\y*1.4142+0.707) {$\cdot$};
    		}
		};
\draw[dashed,thick] (-5.5*1.414,-8*1.414) -- node[circle,draw=black,pos = 0.85,inner sep=1pt,fill=white,solid,thin] {\tiny$T_1$}   (-5.5*1.414,8.5*1.414);
\draw[dashed,thick] (-6.5*1.414,-7.5*1.414) -- node[circle,draw=black,pos = 0.75,inner sep=1pt,fill=white,solid,thin] {\tiny$T_2$}   (6.5*1.414,5.5*1.414);
\draw[dashed,thick] (6.5*1.414,-4.5*1.414) -- node[circle,draw=black,pos = 0.25,inner sep=1pt,fill=white,solid,thin] {\tiny$T_0$}   (-6.5*1.414,8.5*1.414);
\draw[fill=white] (0*1.414,0*1.414) circle (0.3);
\draw[fill=black] (-5*1.414,-5*1.414) circle (0.3);
\draw[fill=black] (-5*1.414,-4*1.414) circle (0.3);
\draw[fill=black] (-5*1.414,-3*1.414) circle (0.3);
\draw[fill=black] (-5*1.414,-2*1.414) circle (0.3);
\draw[fill=black] (-5*1.414,-1*1.414) circle (0.3);
\end{tikzpicture}
  \caption{$\overline{\Pi}(0,10:0,10)$}
  \label{fig:sub2a}
\end{subfigure}
\caption{$(0,10)\otimes(0,10)\in\mathcal{C}(\mathfrak{so}_5,11)$}
\label{fig:test2}
\end{figure}

\par Corollary \ref{goal} applied to (\ref{so5eq2}) gives
\begin{align}
&&\sum_{i=0}^{x-1}[2(i+1)][2(i+2)][2i+3]&\leq[2][m+1][2(m+2)][m+3]\label{eq:1} \\
\Rightarrow\hspace{10 mm}&&\sum_{i=0}^{x-1}(i+1)(i+2)(i+3/2)&\leq4(m+1)(m+2)(m+3),\label{eq:2}
\end{align}
applying Corollary \ref{quantumtrianglecor} to the right-hand side of (\ref{eq:1}) and Lemma \ref{lowerquantumbound} to the left-hand side of (\ref{eq:1}).  Lemma \ref{lowerquantumbound} applies since $m$ even implies $2(2x+2)=2(m+4)<2(k+3)$.  Now we rewrite the right-hand side of (\ref{eq:2}) in terms of $x$ and re-index the left-hand sum, observing each factor on the left-hand side of (\ref{eq:2}) is greater than $i$ to yield
\begin{equation}
\sum_{i=1}^{x}i^3\leq4(2x+3)(2x+4)(2x+5).\label{so5eq2b}
\end{equation}
Using Faulhaber's formula \cite{knuth} on the left-hand side of (\ref{so5eq2b}) produces
\begin{equation*}
\dfrac{1}{4}x^2(x+1)^2\leq4(2x+3)(2x+4)(2x+5)
\end{equation*}
which is true for $x\leq131$, and thus $2x+2=m\leq264$.  From Lemma \ref{twistisone} we have $\theta(0,m)=1$, which implies $(m^2+4m)/(4(k+3))\in\mathbb{Z}$ and thus $k\leq(264^2+4\cdot264)/4-3=17685$.

%%%%%%%%%%%%%%%%%%%%%%%%%%%%%%%%%%%%%%%%%%%%%%%%%%%%%%%%%%%%

\subsection{The case $0\neq\ell\leq x<m-2$}\label{soo3}

Set $\lambda:=\ell+m-x+1$ so that $\lambda=x+3$ if $\ell$ is even, and $\lambda=x+2$ if $\ell$ is odd.  We claim if $0\neq\ell\leq x<m-2$, then
\begin{equation}
(\ell+1,m-\lambda)\oplus(\ell-1,m-\lambda+2)\subset(0,\lambda)\otimes(\ell,m).\label{eq:so510}
\end{equation}
The set $\overline{\Pi}(0,\lambda)$, illustrated by example in Figure \ref{fig:test4}, is a square with vertex $(0,-\lambda)$ and its three conjugates under the Weyl group. In particular $\Pi(0,\lambda:\ell,m)$ contains $(\ell+1,m-\lambda)$ and $(\ell-1,m-\lambda+2)$ since $x+2<m$.  The angles formed by $\overline{\Pi}(0,\lambda:\ell,m)$ and $T_2$ are 45 degrees when they exist which implies there is no contribution to the desired fusion coefficients from $\tau_2$, while $\tau_0$ cannot contribute because $(\ell,m)$ does not lie on $T_0$.  Lastly note that $\overline{\Pi}(0,\lambda:\ell,m)$ does not intersect $T_1$ since $x+2<m$ so there can be no contribution from $\tau_1$ either.  Lemma \ref{fusion} then implies containment (\ref{eq:so510}).

\begin{figure}[H]
\centering
\begin{subfigure}{.5\textwidth}
  \centering
\begin{tikzpicture}[scale=0.18]
\draw[fill=black!5,thick] (3.5*1.414,3.5*1.414) -- (3.5*1.414,-3.5*1.414) --  (-3.5*1.414,-3.5*1.414) --  (-3.5*1.414,3.5*1.414) -- cycle;
\foreach \x in {-6,-5,...,6} {
	\foreach \y in {-8,-7,...,8} {
        \node at (\x*1.4142,\y*1.4142) {$\cdot$};
    		}
		};
\foreach \x in {-6,-5,...,6} {
	\foreach \y in {-8,-7,...,8} {
        \node at (\x*1.4142+0.707,\y*1.4142+0.707) {$\cdot$};
    		}
		};
\draw[dotted,thick] (6.5*1.414,0*1.414) -- (-6*1.414,0*1.414);
\draw[dotted,thick] (0*1.414,8.5*1.414) -- (0*1.414,-8*1.414);
\draw[dotted,thick] (-6*1.414,6*1.414) -- (6.5*1.414,-6.5*1.414);
\draw[dotted,thick] (-6*1.414,-6*1.414) -- (6.5*1.414,6.5*1.414);
\draw[fill=white] (4*1.414,7*1.414) circle (0.3);
\draw[fill=black] (-2.5*1.414,-3.5*1.414) circle (0.3);
\draw[fill=black] (-3.5*1.414,-2.5*1.414) circle (0.3);
\end{tikzpicture}
  \caption{$\overline{\Pi}(0,6)$}
  \label{fig:sub1}
\end{subfigure}%
\begin{subfigure}{.5\textwidth}
  \centering
\begin{tikzpicture}[scale=0.18]
\draw[fill=black!5,thick] (3.5*1.414,3.5*1.414) -- (3.5*1.414,-3.5*1.414) --  (-3.5*1.414,-3.5*1.414) --  (-3.5*1.414,3.5*1.414) -- cycle;
\draw[pattern=crosshatch,thick] (-1.5*1.414,3.5*1.414) -- (-1.5*1.414,-0.5*1.414) -- (-0.5*1.414,-1.5*1.414) -- (0.5*1.414,-1.5*1.414) -- (0.5*1.414,-3.5*1.414) -- (-3.5*1.414,-3.5*1.414) --  (-3.5*1.414,3.5*1.414) --  (-2.5*1.414,3.5*1.414) -- cycle;
\draw[fill=black!25,thick] (-1.5*1.414,3.5*1.414) -- (-1.5*1.414,-1.5*1.414) -- (2.5*1.414,-1.5*1.414) -- (3*1.414,-1*1.414) -- cycle;
\draw[fill=black!25,thick] (0.5*1.414,-3.5*1.414) -- (0.5*1.414,-1.5*1.414) -- (2.5*1.414,-1.5*1.414) -- cycle;
\foreach \x in {-6,-5,...,6} {
	\foreach \y in {-8,-7,...,8} {
        \node at (\x*1.4142,\y*1.4142) {$\cdot$};
    		}
		};
\foreach \x in {-6,-5,...,6} {
	\foreach \y in {-8,-7,...,8} {
        \node at (\x*1.4142+0.707,\y*1.4142+0.707) {$\cdot$};
    		}
		};
\draw[dashed,thick] (-4*1.414,-8*1.414) -- node[circle,draw=black,pos = 0.15,inner sep=1pt,fill=white,solid,thin] {\tiny$T_1$}   (-4*1.414,8.5*1.414);
\draw[dashed,thick] (-4*1.414,-8*1.414) -- node[circle,draw=black,pos = 0.9,inner sep=1pt,fill=white,solid,thin] {\tiny$T_2$}   (6.5*1.414,2.5*1.414);
\draw[dashed,thick] (6.5*1.414,-4.5*1.414) -- node[circle,draw=black,pos = 0.1,inner sep=1pt,fill=white,solid,thin] {\tiny$T_0$}   (-6*1.414,8*1.414);
\draw[fill=white] (0*1.414,0*1.414) circle (0.3);
\draw[fill=black] (-2.5*1.414,-3.5*1.414) circle (0.3);
\draw[fill=black] (-3.5*1.414,-2.5*1.414) circle (0.3);
\end{tikzpicture}
  \caption{$\overline{\Pi}(0,6:3,7)$}
  \label{fig:sub4a}
\end{subfigure}
\caption{$(0,6)\otimes(3,7)\in\mathcal{C}(\mathfrak{so}_5,10)$}
\label{fig:test4}
\end{figure}
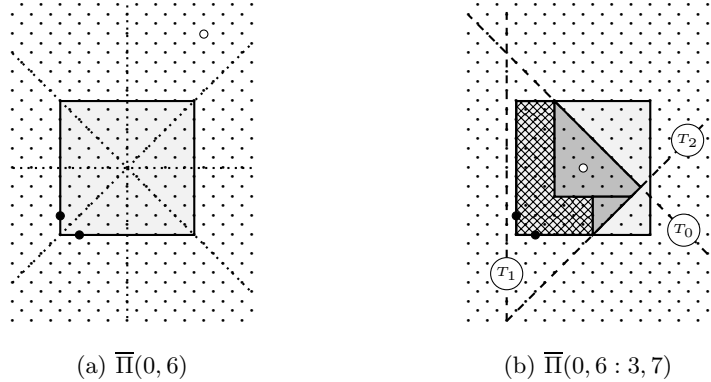

\par Now notice that $(\ell+1,m-\lambda)$ and $(\ell-1,m-\lambda+2)$ are contained in the set of weights $(s,t)\in\Lambda_0$ such that $s+t=x$.  The dimensions of these objects have a clear lower bound.
\begin{lem}\label{so5convex}
If $0\leq x<k/2$, $\dim'(0,x)\leq\dim'(s,x-s)$ for all $0\leq s\leq x$.
\end{lem}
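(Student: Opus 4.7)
The plan is to reduce the dimension inequality to an elementary trigonometric inequality on $[0,\pi/2]$, which then yields to two applications of the monotonicity of cosine.

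First, the factor $[2(x+2)]$ appears in $\dim'(s,x-s)$ for every $s$, so after cancelling it, the statement reduces to
\[
[2(s+1)]\,[x-s+1]\,[x+s+3]\ \geq\ [2]\,[x+1]\,[x+3]
\]
for $0\leq s\leq x$. Next, setting $\alpha := \pi/(2(k+3))$ so that $[n] = \sin(n\alpha)/\sin\alpha$, I would apply the standard product-to-sum identity
\[
[a][b] = \frac{\cos((a-b)\alpha) - \cos((a+b)\alpha)}{2\sin^2\alpha}
\]
to the pair $[x-s+1]\,[x+s+3]$ on the left (sum $2x+4$, difference $2s+2$) and to $[x+1][x+3]$ on the right (sum $2x+4$, difference $2$). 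Writing $A := (2x+4)\alpha$, $B := 2\alpha$, and $C := (2s+2)\alpha$, the inequality becomes
\[
\sin(C)\bigl(\cos(C) - \cos(A)\bigr)\ \geq\ \sin(B)\bigl(\cos(B) - \cos(A)\bigr).
\]

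This is where the hypothesis $x < k/2$ enters decisively: since $x$ is an integer this forces $2x+4 \leq k+3$, placing all three angles in the interval $[0,\pi/2]$ with $B \leq C \leq A$, where cosine is nonnegative and decreasing. Expanding $\sin(C)\cos(C) - \sin(B)\cos(B)$ as $\tfrac{1}{2}(\sin 2C - \sin 2B) = \cos(C+B)\sin(C-B)$, rewriting $\sin C - \sin B$ by sum-to-product, and cancelling the common nonnegative factor $2\sin((C-B)/2)$, the inequality reduces to
\[
\cos(C+B)\,\cos\!\left(\tfrac{C-B}{2}\right)\ \geq\ \cos\!\left(\tfrac{C+B}{2}\right)\cos(A).
\]
Since $s \leq x$ yields $C+B = (2s+4)\alpha \leq (2x+4)\alpha = A$, monotonicity of cosine gives $\cos(C+B) \geq \cos(A)$; and $B \geq 0$ trivially gives $(C-B)/2 \leq (C+B)/2$, so $\cos((C-B)/2) \geq \cos((C+B)/2)$. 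Multiplying these two nonnegative inequalities produces the claim.

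The main obstacle is keeping the trigonometric bookkeeping clean enough that the final monotonicity step is visible, together with the careful verification that $A \leq \pi/2$ (and hence $B, C \leq \pi/2$) so that cosine is monotone throughout. Once the identity $[a][b] = (\cos((a-b)\alpha) - \cos((a+b)\alpha))/(2\sin^2\alpha)$ is used to expose the pair with common sum $2x+4$, the remainder is mechanical.
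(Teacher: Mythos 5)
Your proof is correct, and it takes a genuinely different route from the paper's. The paper treats $s$ as a continuous variable, writes $\dim'(s,x-s)$ as a constant times a product $f(s)g(s)$ of two trigonometric functions, computes signs of first and second derivatives to show the product is strictly concave on $[0,x]$ (so the minimum is at an endpoint), and then performs a separate direct calculation to verify the endpoint comparison $\dim'(0,x)\leq\dim'(x,0)$. Your argument is purely algebraic/trigonometric: cancel the common factor $[2(x+2)]$, use the product-to-sum identity to expose the shared angle $A=(2x+4)\alpha$ in both the pair $[x-s+1][x+s+3]$ and $[x+1][x+3]$, reduce by sum-to-product to
\begin{equation*}
\cos(C+B)\,\cos\!\left(\tfrac{C-B}{2}\right)\ \geq\ \cos(A)\,\cos\!\left(\tfrac{C+B}{2}\right),
\end{equation*}
and finish with two applications of monotonicity of cosine on $[0,\pi/2]$ (which the hypothesis $x<k/2$, forcing $2x+4\leq k+3$, makes available). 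This sidesteps the calculus entirely and absorbs the paper's extra endpoint verification into a single chain of inequalities, since $s\leq x$ directly gives $C+B\leq A$. The one step worth spelling out explicitly is the cancellation of $2\sin\bigl(\tfrac{C-B}{2}\bigr)$: this factor vanishes at $s=0$ (where the claimed inequality is an equality), so the cancellation is legitimate only for $s\geq 1$, and the $s=0$ case should be noted as trivial. Your approach is more elementary and arguably shorter; the paper's concavity argument, on the other hand, generalizes more readily to higher-rank products with more factors (as it does in the analogous Lemma for $\mathfrak{g}_2$), since it only needs sign information on derivatives rather than a tailored identity exposing a common angle.
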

\begin{proof}
With $\kappa:=\pi/(2(k+3))$, define $f(s):=\sin((x-s+1)\kappa)\sin((x+s+3)\kappa)$ and $g(s):=\sin((2s+2)\kappa)$ so that
\begin{equation*}
\dim'(s,x-s)=\sin^{-4}(\kappa)\sin(2(x+2)\kappa)f(s)g(s)
\end{equation*}
as a real function of $s\in[0,x]$ with the constant $\sin^{-4}(\kappa)\sin(2(x+2)\kappa)>0$ since $\kappa$ and $2(x+2)\kappa$ are in the interval $(0,\pi/2)$ for $0\leq x<k/2$.  We will prove our main claim by showing that $(d^2/ds^2)\dim'(s,x-s)<0$ on $[0,x]$ and $\dim'(0,x)\leq\dim'(x,0)$.  It can be easily verified that $f(s)>0$, $g(s)>0$, $g'(s)>0$ and $g''(s)<0$ for $s\in[0,x]$, so we will explicitly compute with $\alpha:=x-s+1$ and $\beta:=x+s+3$ for brevity:
\begin{align*}
&&f'(s)&=\kappa(\sin(\alpha\kappa)\cos(\beta\kappa)-\cos(\alpha\kappa)\sin(\beta\kappa)) \\
	&&&=-\kappa\sin(2(s+2)\kappa) \\
\Rightarrow&&f''(s)&=-2\kappa^2\cos(2(s+2)\kappa).
\end{align*}
The above computations imply $f'(s)<0$ and $f''(s)<0$ for $s\in[0,x]$.  Using the product rule twice implies $(fg)''(s)<0$ and moreover $(d^2/ds^2)\dim'(s,x-s)<0$ since these functions differ by a positive constant factor.

\par Lastly we need to verify $\dim'(0,x)\leq\dim'(x,0)$, or that
\begin{align*}
&&[2][x+1][x+3]&\leq[2x+2][2x+3] \\
\Leftrightarrow&&[x+1][x+3]&\leq\dfrac{[2x+2]}{[2]}[2x+3].
\end{align*}
Note that
\begin{equation*}
\dfrac{[2x+2]}{[2]}=\dfrac{q^{x+1}+q^{-(x+1)}}{q+q^{-1}}[x+1]=\dfrac{\cos\left(\dfrac{(x+1)\pi}{2(k+3)}\right)}{\cos\left(\dfrac{\pi}{2(k+3)}\right)}[x+1]\geq\dfrac{\sqrt{2}}{2}[x+1],
\end{equation*}
because $x+1<(1/2)(k+3)$.  Moreover to complete our proof it would suffice that $[x+3]\leq[2x+3]$.  This inequality is always true because $x+3$ and $2x+3$ are in the interval $(0,k+3)$ and the function $[n]=\sin(n\pi/(2(k+3)))/\sin(\pi/(2(k+3)))$ is strictly increasing for $n\in(0,k+3)$.
\end{proof}

\par Hence when $\ell$ is even, Lemma \ref{so5convex} and Corollary \ref{goal} applied to (\ref{eq:so510}) implies
\begin{align}
&&\dim'(0,x)+\dim'(0,x)&\leq\dim'(\ell+1,m-\lambda)+\dim'(\ell-1,m-\lambda+2) \nonumber\\
&&&\leq\dim'(0,x+3) \label{lastso5}\\
\Rightarrow&&\dim'(0,x)+\dim'(0,x)&\leq[2]([x+1]+3)([2x+4]+6)([x+3]+3)\label{lastso5b}
\end{align}
by applying Lemma \ref{quantumtriangle} to the right-hand side of (\ref{lastso5}).  All terms in (\ref{lastso5b}) have a factor of $[2]$ which we divide out before expanding the product on the right-hand side of (\ref{lastso5b}) and subtracting the leading term (equal to $\dim'(0,x)$) to yield
\begin{equation}
\dfrac{1}{4}(x+1)(x+2)(x+3)\leq6(3x^2+21x+38).\label{lastso52}
\end{equation}
Corollary \ref{quantumtrianglecor} was applied eliminate the quantum analogs on the right-hand side of (\ref{lastso52}) and Lemma \ref{lowerquantumbound} was applied to eliminate the quantum analogs on the left-hand side, which is applicable since $4(x+2)<4(k+3)$ since $x<k/2$.  Inequality (\ref{lastso52}) is true for $x\leq72$, which implies $0<\ell\leq72$ and $74<m\leq145$.  Moreover Lemma \ref{twistisone} implies $k\leq13319$ by maximizing $(2\ell^2+2\ell m+6\ell+m^2+4m)/4-3$ subject to these constraints.  Repeating the above with $\ell$ odd only changes the right-hand side of (\ref{lastso52}) to $12(x+3)^2$, which produces a more restrictive bound on $x$.

%%%%%%%%%%%%%%%%%%%%%%%%%%%%%%%%%%%%%%%%%%%%%%%%%%%%%%%%%%%%%%%%%%%%%%%%%%%%%%%%%%%%%%%%%%%%%%%%

%%%%%%%%%%%%%%%%%%%%%%%%%%%%%%%%%%%%%%%%%%%%%%%%%%%%%%%%%%%%%%%%%%%%%%%%%%%%%%%%%%%%%%%%%%%%%%%%%%%

%%%%%%%%%%%%%%%%%%%%%%%%%%%%%%%%%%%%%%%%%%%%%%%%%%%%%%%%%%%%%%%%%%%%%%%%%%%%%%%%%%%%%%%%%%%%%%%%%%%

%%%%%%%%%%%%%%%%%%%%%%%%%%%%%%%%%%%%%%%%%%%%%%%%%%%%%%%%%%%%%%%%%%%%%%%%%%%%%%%%%%%%%%%%%%%%%%%%%%%

\section{Proof of Theorem \ref{thebigone}: $\mathcal{C}(\mathfrak{g}_2,k)$}\label{g2}

\par Let $A$ be a connected \'etale algebra in $\mathcal{C}(\mathfrak{g}_2,k)$ with minimal nontrivial summand $(\ell,m)$ (i.e. $\ell+(3/2)m$ is minimal) and fix $x:=\lceil(1/2)(\ell+(3/2)m)\rceil-1$; the value $x$ is the greatest integer $n$ such that $(n,0)$ satisfies the hypotheses of Lemma \ref{simplefree}.  Similarly one can set $y:=\lceil(1/2)((2/3)\ell+m)\rceil-1$; the value $y$ is the greatest integer $n$ such that $(0,n)$ satisfies the hypotheses of Lemma \ref{simplefree}.  The proof of Theorem \ref{thebigone} will be split into four (clearly exhaustive) cases, illustrated by example in Figure \ref{fig:casesg2}, with varying numbers of subcases for a fixed $x$: $0\leq\ell\leq2$, $3\leq \ell\leq x+3$, $x+3<\ell$ with $m\neq0$, and $m=0$.  

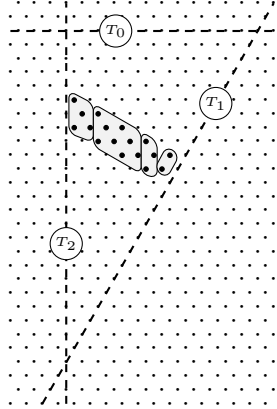
\begin{figure}[H]
\centering
\begin{tikzpicture}[scale=0.15]
\draw[fill=gray!10,rounded corners=1mm] (-0.333*1.414,2.333*2.4495) -- (-0.333*1.414,0.8*2.4495) -- (1.2*1.414,0.466*2.4495)  -- (1.2*1.414,1.866*2.4495) -- cycle;
\draw[fill=gray!10,rounded corners=1mm] (1.2*1.414,1.866*2.4495) -- (1.2*1.414,0.333*2.4495) --  (4.2*1.414,-0.666*2.4495) -- (4.2*1.414,0.866*2.4495)  -- cycle;
\draw[fill=gray!10,rounded corners=1mm] (4.2*1.414,0.866*2.4495) -- (4.2*1.414,-0.666*2.4495) --  (5.2*1.414,-0.833*2.4495) --  (5.2*1.414,0.5*2.4495)  -- cycle;
\draw[fill=gray!10,rounded corners=1mm] (5.8*1.414,0.333*2.4495) -- (6.6*1.414,0*2.4495) -- (5.8*1.414,-0.833*2.4495) --(5.02*1.414,-0.5*2.4495) -- cycle;
\foreach \x in {-4,-3,...,12} {
	\foreach \y in {-9,-8,...,5} {
        \node at (\x*1.414,\y*2.4495) {$\cdot$};
    		}
		};
\foreach \x in {-4,-3,...,12} {
	\foreach \y in {-9,-8,...,5} {
        \node at (\x*1.414+0.5*1.414,1.225+\y*2.4495) {$\cdot$};
    		}
		};

\draw[fill=black] (6*1.414,0*2.4495) circle (0.2);
\draw[fill=black] (5.5*1.414,-0.5*2.4495) circle (0.2);

\draw[fill=black] (0*1.414,2*2.4495) circle (0.2);
\draw[fill=black] (0*1.414,1*2.4495) circle (0.2);
\draw[fill=black] (0.5*1.414,1.5*2.4495) circle (0.2);
\draw[fill=black] (1*1.414,1*2.4495) circle (0.2);

\draw[fill=black] (1.5*1.414,1.5*2.4495) circle (0.2);
\draw[fill=black] (2*1.414,1*2.4495) circle (0.2);
\draw[fill=black] (1.5*1.414,0.5*2.4495) circle (0.2);
\draw[fill=black] (2.5*1.414,0.5*2.4495) circle (0.2);
\draw[fill=black] (3.5*1.414,0.5*2.4495) circle (0.2);
\draw[fill=black] (3*1.414,0*2.4495) circle (0.2);
\draw[fill=black] (3*1.414,1*2.4495) circle (0.2);
\draw[fill=black] (4*1.414,0*2.4495) circle (0.2);
\draw[fill=black] (4.5*1.414,-0.5*2.4495) circle (0.2);
\draw[fill=black] (4.5*1.414,0.5*2.4495) circle (0.2);
\draw[fill=black] (5*1.414,0*2.4495) circle (0.2);

\draw[thick,dashed] (-2*1.414,-9*2.4495) -- node[circle,draw=black,pos = 0.75,inner sep=1pt,fill=white,solid,thin] {\tiny$T_1$}  (12.5*1.414,5.5*2.4495);
\draw[thick,dashed] (-4*1.414,4.5*2.4495) --  node[circle,draw=black,pos = 0.4,inner sep=1pt,fill=white,solid,thin] {\tiny$T_0$} (12.5*1.414,4.5*2.4495);
\draw[thick,dashed] (-0.5*1.414,-9*2.4495) --  node[circle,draw=black,pos = 0.4,inner sep=1pt,fill=white,solid,thin] {\tiny$T_2$} (-0.5*1.414,5.5*2.4495);
\end{tikzpicture}
\caption{Possible $(\ell,m)$ when $k=20$ and $x=5$}
\label{fig:casesg2}
\end{figure}

%%%%%%%%%%%%%%%%%%%%%%%%%%%%%%%%%%%%%%%%%%%%%%%%%%%%%%%%%%%%%%%%%%%%%%%%%%%%%%%%%%%%%%%%%%%%%%%%%%%

\subsection{The case $0\leq\ell\leq2$}\label{g2emm2}

\subsubsection{The subcase $\ell=0$}\label{g2ellequalszero}

\par We will employ the same strategy as Section \ref{sog1}.  Recall $y=\lceil m/2\rceil-1$ if $\ell=0$ and set $\lambda:=m-y+1$ so that $\lambda=y+3$ if $m$ is even, and $\lambda=y+2$ if $m$ is odd.  We claim for $4<m\leq k/2$,
\begin{equation}
(0,y-1)\oplus(3,y-2)\subset(0,\lambda)\otimes(0,m).\label{eq:g2boi133}
\end{equation}
The set $\overline{\Pi}(0,\lambda)$, illustrated by example in Figure \ref{fig:g21}, (refer to Section \ref{sec:geometry} for descriptions of the notation and visualization used), is a hexagon with vertex $(0,-\lambda)$ and its five conjugates under the Weyl group.  In particular $\Pi(0,\lambda:0,m)$ contains $(0,y-1)$ and $(3,y-2)$ since $m>4$.  There is no contribution to $N_{(0,\lambda),(0,m)}^{(0,y-1)}$ or $N_{(0,\lambda),(0,m)}^{(3,y-2)}$ from $\tau_2$ because the angles formed by $\overline{\Pi}(0,\lambda:0,m)$ and $T_2$ are 60 degrees and there is no contribution from $\tau_1$ because the angles formed by $\overline{\Pi}(0,\lambda:0,m)$ and $T_1$ (when they exist) are 30 degrees.  There is no contribution from $\tau_0$ because $(0,m)$ does not lie on $T_0$.  Lemma \ref{fusion} then implies containment (\ref{eq:g2boi133}).
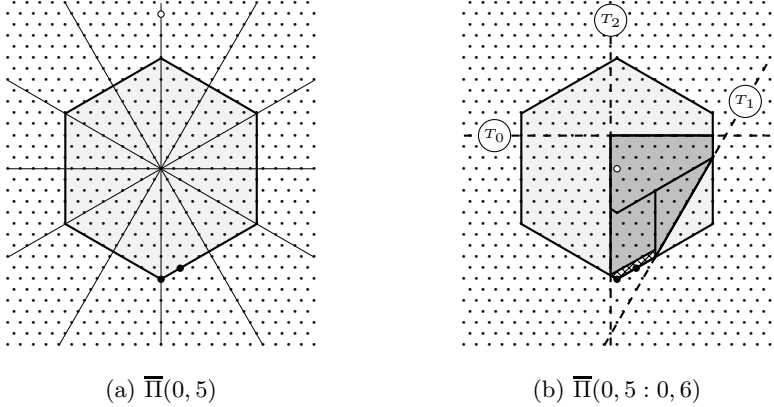
\begin{figure}[H]
\centering
\begin{subfigure}{.5\textwidth}
  \centering
\begin{tikzpicture}[scale=0.12]
\draw[fill=black!5,thick] (0*1.414,5*2.4495) -- (7.5*1.414,2.5*2.4495) -- (7.5*1.414,-2.5*2.4495) -- (0*1.414,-5*2.4495) -- (-7.5*1.414,-2.5*2.4495) -- (-7.5*1.414,2.5*2.4495) -- cycle;
\foreach \x in {-12,-11,...,12} {
	\foreach \y in {-8,-7,...,7} {
        \node at (\x*1.414,\y*2.4495) {$\cdot$};
    		}
		};
\foreach \x in {-12,-11,...,11} {
	\foreach \y in {-8,-7,...,7} {
        \node at (\x*1.414+0.5*1.414,1.225+\y*2.4495) {$\cdot$};
    		}
		};
\draw[very thin] (-12*1.414,0*2.4495) -- (12*1.414,0*2.4495);
\draw[very thin] (0*1.414,7.5*2.4495) -- (0*1.414,-8*2.4495);
\draw[very thin] (-12*1.414,4*2.4495) -- (12*1.414,-4*2.4495);
\draw[very thin] (-12*1.414,-4*2.4495) -- (12*1.414,4*2.4495);
\draw[very thin] (7.5*1.414,7.5*2.4495) -- (-8*1.414,-8*2.4495);
\draw[very thin] (8*1.414,-8*2.4495) -- (-7.5*1.414,7.5*2.4495);
\draw[fill=white] (0*1.414,7*2.4495) circle (0.35);
\draw[fill=black] (0*1.414,-5*2.4495) circle (0.35);
\draw[fill=black] (1.5*1.414,-4.5*2.4495) circle (0.35);
\end{tikzpicture}
  \caption{$\overline{\Pi}(0,5)$}
  \label{fig:g21a}
\end{subfigure}%
\begin{subfigure}{.5\textwidth}
  \centering
\begin{tikzpicture}[scale=0.12]
\draw[fill=black!5,thick] (0*1.414,5*2.4495) -- (7.5*1.414,2.5*2.4495) -- (7.5*1.414,-2.5*2.4495) -- (0*1.414,-5*2.4495) -- (-7.5*1.414,-2.5*2.4495) -- (-7.5*1.414,2.5*2.4495) -- cycle;
\draw[pattern=crosshatch,thick] (-0.5*1.414,1.5*2.4495) --  (7.5*1.414,1.5*2.4495) -- (7.5*1.414,0.5*2.4495) -- (3*1.414,-4*2.4495) -- (0*1.414,-5*2.4495)-- (-0.5*1.414,-4.8*2.4495) -- cycle;
\draw[fill=black!25,thick] (-0.5*1.414,1.5*2.4495) -- (6.5*1.414,1.5*2.4495) -- (6.5*1.414,-0.5*2.4495)-- (3.5*1.414,-3.5*2.4495) -- (-0.5*1.414,-4.8*2.4495) -- cycle;
\draw[fill=black!25,thick] (-0.5*1.414,1.5*2.4495) -- (-0.5*1.414,-1.8*2.4495) -- (0*1.414,-2*2.4495) -- (7.5*1.414,0.5*2.4495) -- (7.5*1.414,1.5*2.4495) -- cycle;
\draw[fill=black!25,thick] (3*1.414,-4*2.4495) -- (3*1.414,-1*2.4495) -- (7.5*1.414,0.5*2.4495) -- cycle;
\foreach \x in {-12,-11,...,12} {
	\foreach \y in {-8,-7,...,7} {
        \node at (\x*1.414,\y*2.4495) {$\cdot$};
    		}
		};
\foreach \x in {-12,-11,...,11} {
	\foreach \y in {-8,-7,...,7} {
        \node at (\x*1.414+0.5*1.414,1.225+\y*2.4495) {$\cdot$};
    		}
		};
\draw[thick,dashed] (-1*1.414,-8*2.4495) -- node[circle,draw=black,pos = 0.85,inner sep=1pt,fill=white,solid,thin] {\tiny$T_1$}  (12*1.414,5*2.4495);
\draw[thick,dashed] (-12*1.414,1.5*2.4495) --  node[circle,draw=black,pos = 0.1,inner sep=1pt,fill=white,solid,thin] {\tiny$T_0$} (12*1.414,1.5*2.4495);
\draw[thick,dashed] (-0.5*1.414,-8*2.4495) --  node[circle,draw=black,pos = 0.95,inner sep=1pt,fill=white,solid,thin] {\tiny$T_2$} (-0.5*1.414,7.5*2.4495);
\draw[fill=white] (0*1.414,0*2.4495) circle (0.35);
\draw[fill=black] (0*1.414,-5*2.4495) circle (0.35);
\draw[fill=black] (1.5*1.414,-4.5*2.4495) circle (0.35);
\end{tikzpicture}
  \caption{$\overline{\Pi}(0,5:0,6)$}
  \label{fig:g21b}
\end{subfigure}
\caption{$(0,5)\otimes(0,6)\in\mathcal{C}(\mathfrak{g}_2,18)$}
\label{fig:g21}
\end{figure}

\par If $m$ is even, Corollary \ref{goal} applied to (\ref{eq:g2boi133}) gives
\begin{align}
&\dim'(0,y-1)+\dim'(3,y-2)\label{eq:g2boi}  \\
&\leq[3y+12][3y+15][6y+27][3y+13][3y+14]\label{eq:g2boi2} \\
&\leq([3y]+12)([3y+3]+12)([6y+3]+24)([3y+1]+12)([3y+2]+12)\label{eq:g2boi3}
\end{align}
where (\ref{eq:g2boi3}) is gained by applying Corollary \ref{quantumtrianglecor} to (\ref{eq:g2boi2}).  Expanding the product in (\ref{eq:g2boi3}) and subtracting the leading term (equal to $\dim'(0,y-1)$) yields
\begin{equation}
\dfrac{27}{8}(y-1)(y+3)(y+1)(3y+1)(3y+5)\leq3240(3y^4+30y^3+136y^2+305y+273)\nonumber
\end{equation}
by applying Lemma \ref{lowerquantumbound} to the factors of $\dim'(3,y-2)$ in (\ref{eq:g2boi}), which is true for even $m$ with $y\leq324$ or likewise $m\leq650$.  From Lemma \ref{twistisone}, $\theta(0,m)=1$ which implies $(3m^2+9m)/(3(k+4))\in\mathbb{Z}$ and moreover $k\leq(1/3)(3(650)^2+9(650))-4=424446$.

\par By repeating the above argument with $m$ odd we obtain the inequality
\begin{equation}
\dfrac{27}{8}(y-1)(y+3)(y+1)(3y+1)(3y+5)<810(9y^4+72y^3+255y^2+444y+308)
\nonumber\end{equation}
which is true for $y\leq242$ which evidently yields a stricter bound on $k$.

%%%%%

\subsubsection{The subcase $\ell=1$}

\par The strategy is identical to Section \ref{g2ellequalszero}, except with $\lambda:=m-y+1$ we claim
\begin{equation*}
(0,y-1)\oplus(3,y-2)\subset(1,\lambda)\otimes(0,m),\label{eq:g2boi1a}
\end{equation*}
and we omit the redundant arguments for both this containment and to produce the following inequalities, based on $m$ being even or odd, respectively:
\begin{align*}
\dfrac{27}{32}y(y+1)(2y+1)(3y+1)(3y+2)&\leq324(54y^4+613y^3+2861y^2+\beta_1y+\beta_2) \\
\dfrac{27}{32}y(y+1)(2y+1)(3y+1)(3y+2)&\leq1620(y+3)(9y^3+65y^2+183y+191)
\end{align*}
where $\beta_1=6427$ and $\beta_2=5725$ for display purposes.  The first inequality is true for even $m$ with $y\leq1160$ and the second for odd $m$ with $y\leq967$, hence $m\leq2322$ and moreover $k\leq(1^2+3(1)(2322)+5(1)+3(2322)^2+9(2322))/3-4=5400970$.

%%%%%

\subsubsection{The subcase $\ell=2$}

\par The strategy is identical to Section \ref{g2ellequalszero}, except with $\lambda:=m-y+1$ we claim
\begin{equation*}
(0,y-1)\oplus(3,y-2)\subset(2,\lambda)\otimes(0,m),\label{eq:g2boi1aa}
\end{equation*}
and so we omit the redundant argument to produce the following inequalities, based on $m$ being even or odd, respectively:
\begin{align*}
\dfrac{27}{32}y(y+1)(2y+1)(3y+1)(3y+2)&\leq81(399y^4+5171y^3+\beta_1y^2+\beta_2y+\beta_3) \\
\dfrac{27}{32}y(y+1)(2y+1)(3y+1)(3y+2)&\leq2835(y+3)(9y^3+73y^2+234y+278)
\end{align*}
where $\beta_1=28239$, $\beta_2=74821$, and $\beta_3=78570$ for display purposes.  The first inequality is true for even $m$ with $y\leq2138$ and the second for odd $m$ with $y\leq1688$, hence $m\leq4272$ and moreover $k\leq(2^2+3(2)(4272)+5(2)+3(4272)^2+9(4272))/3-4<18271135$.

%%%%%%%%%%%%%%%%%%%%%%%%%%%%%%%%%%%%%%%%%%%%%%%%%%%%%%%%%%%%

\subsection{The case $m=0$}\label{g2emm}

Recall $x=\lceil\ell/2\rceil-1$ if $m=0$.  Set $\lambda:=\ell-x+1$ so that $\lambda=x+3$ if $\ell$ is even and $\lambda=x+2$ if $\ell$ is odd.  We claim that for $4<\ell\leq k$,
\begin{equation}
(x-1,0)\oplus(x-2,1)\subset(\lambda,0)\otimes(\ell,0).\label{eq:g2man1}
\end{equation}
The set $\overline{\Pi}(\lambda,0)$, illustrated by example in Figure \ref{fig:g21e}, is a hexagon with vertex $(-\lambda,0)$ and its five conjugates under the Weyl group.  In particular $\Pi(\lambda,0:\ell,0)$ contains $(x-1,0)$ and $(x-2,1)$ provided $\ell>4$.  The angles formed by $\overline{\Pi}(\lambda,0:\ell,0)$ and $T_1$ are 30 degrees and the angles formed by $\overline{\Pi}(\lambda,0:\ell,0)$ and $T_0,T_2$ are 60 degrees, ensuring there can be no contribution from $\tau_0,\tau_1,\tau_2$.   Lemma \ref{fusion} then implies containment (\ref{eq:g2man1}).

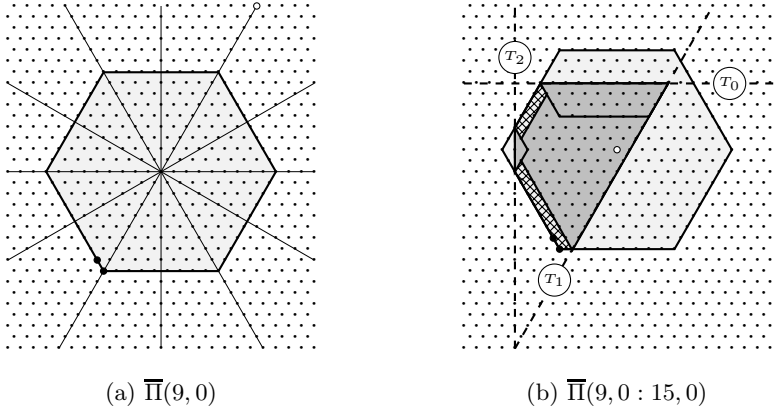
\begin{figure}[H]
\centering
\begin{subfigure}{.5\textwidth}
  \centering
\begin{tikzpicture}[scale=0.12]
\draw[fill=black!5,thick] (4.5*1.414,4.5*2.4495) -- (9*1.414,0*2.4495) -- (4.5*1.414,-4.5*2.4495) -- (-4.5*1.414,-4.5*2.4495) -- (-9*1.414,0*2.4495) -- (-4.5*1.414,4.5*2.4495) -- cycle;
\foreach \x in {-12,-11,...,12} {
	\foreach \y in {-8,-7,...,7} {
        \node at (\x*1.414,\y*2.4495) {$\cdot$};
    		}
		};
\foreach \x in {-12,-11,...,11} {
	\foreach \y in {-8,-7,...,7} {
        \node at (\x*1.414+0.5*1.414,1.225+\y*2.4495) {$\cdot$};
    		}
		};
\draw[very thin] (-12*1.414,0*2.4495) -- (12*1.414,0*2.4495);
\draw[very thin] (0*1.414,7.5*2.4495) -- (0*1.414,-8*2.4495);
\draw[very thin] (-12*1.414,4*2.4495) -- (12*1.414,-4*2.4495);
\draw[very thin] (-12*1.414,-4*2.4495) -- (12*1.414,4*2.4495);
\draw[very thin] (7.5*1.414,7.5*2.4495) -- (-8*1.414,-8*2.4495);
\draw[very thin] (8*1.414,-8*2.4495) -- (-7.5*1.414,7.5*2.4495);
\draw[fill=white] (7.5*1.414,7.5*2.4495) circle (0.35);
\draw[fill=black] (-4.5*1.414,-4.5*2.4495) circle (0.35);
\draw[fill=black] (-5*1.414,-4*2.4495) circle (0.35);
\end{tikzpicture}
  \caption{$\overline{\Pi}(9,0)$}
  \label{fig:g21a}
\end{subfigure}%
\begin{subfigure}{.5\textwidth}
  \centering
\begin{tikzpicture}[scale=0.12]
\draw[fill=black!5,thick] (4.5*1.414,4.5*2.4495) -- (9*1.414,0*2.4495) -- (4.5*1.414,-4.5*2.4495) -- (-4.5*1.414,-4.5*2.4495) -- (-9*1.414,0*2.4495) -- (-4.5*1.414,4.5*2.4495) -- cycle;
\draw[pattern=crosshatch,thick] (-3.5*1.414,-4.5*2.4495) -- (-4.5*1.414,-4.5*2.4495) -- (-8*1.414,-1*2.4495) -- (-8*1.414,1*2.4495)  -- (-6*1.414,3*2.4495) -- (4*1.414,3*2.4495) -- cycle;
\draw[fill=black!25,thick] (-8*1.414,0*2.4495) -- (-5*1.414,3*2.4495) -- (4*1.414,3*2.4495) -- (-3.5*1.414,-4.55*2.4495) -- cycle;
\draw[fill=black!25,thick] (-8*1.414,-1*2.4495) -- (-8*1.414,1*2.4495) -- (-7*1.414,0*2.4495) -- cycle;
\draw[fill=black!25,thick] (-6*1.414,3*2.4495) -- (-4.5*1.414,1.5*2.4495) -- (2.5*1.414,1.5*2.4495) -- (4*1.414,3*2.4495) -- cycle;
\foreach \x in {-12,-11,...,12} {
	\foreach \y in {-9,-8,...,6} {
        \node at (\x*1.414,\y*2.4495) {$\cdot$};
    		}
		};
\foreach \x in {-12,-11,...,11} {
	\foreach \y in {-9,-8,...,6} {
        \node at (\x*1.414+0.5*1.414,1.225+\y*2.4495) {$\cdot$};
    		}
		};
\draw[thick,dashed] (-8*1.414,-9*2.4495) -- node[circle,draw=black,pos = 0.85,inner sep=1pt,fill=white,solid,thin] {\tiny$T_2$}  (-8*1.414,6.5*2.4495);
\draw[thick,dashed] (-12*1.414,3*2.4495) -- node[circle,draw=black,pos = 0.85,inner sep=1pt,fill=white,solid,thin] {\tiny$T_0$} (12.5*1.414,3*2.4495);
\draw[thick,dashed] (-8*1.414,-9*2.4495) --node[circle,draw=black,pos = 0.2,inner sep=1pt,fill=white,solid,thin] {\tiny$T_1$}  (7.5*1.414,6.5*2.4495);
\draw[fill=white] (0*1.414,0*2.4495) circle (0.35);
\draw[fill=black] (-4.5*1.414,-4.5*2.4495) circle (0.35);
\draw[fill=black] (-5*1.414,-4*2.4495) circle (0.35);
\end{tikzpicture}
  \caption{$\overline{\Pi}(9,0:15,0)$}
  \label{fig:g21b}
\end{subfigure}
\caption{$(9,0)\otimes(15,0)\in\mathcal{C}(\mathfrak{g}_2,20)$}
\label{fig:g21e}
\end{figure}

If $m$ is even, Corollary \ref{goal} applied to (\ref{eq:g2man1}) gives
\begin{align}
&\dim'(x-1,0)+\dim'(x-2,1) \label{eq:g2man}\\
&\leq[x+4][3][3x+15][3x+18][x+7][2x+11] \label{eq:g2man2}\\
&\leq([x]+4)[3]([3x+3]+12)([3x+6]+12)([x+3]+4)([2x+3]+8)\label{eq:g2man3}
\end{align}
where (\ref{eq:g2man3}) is gained by applying Corollary \ref{quantumtrianglecor} to (\ref{eq:g2man2}).  Expanding the product in (\ref{eq:g2man3}) and subtracting the leading term (which is equal to $\dim'(x-1,0)$) yields
\begin{equation}
\dfrac{27}{16}(x-1)(x+2)(x+3)(x+5)(x+2)\leq1080(x^4+14x^3+80x^2+217x+231)
\end{equation}
by applying Lemma \ref{lowerquantumbound} to the factors of $\dim'(x-2,1)$ in (\ref{eq:g2man}), which is true for even $x\leq642$ or likewise $\ell\leq1286$.  From Lemma \ref{twistisone} we know $\theta(\ell,0)=1$ which implies $(\ell^2+5\ell)/(3(k+4))\in\mathbb{Z}$ and with the proven bound on $\ell$, $k\leq(1/3)((1286)^2+5(1286))-4=1660214/3<553405$.

\par If $m$ is odd, the above process yields the inequality
\begin{equation*}
\dfrac{27}{16}(x-1)(x+2)(x+3)(x+5)(x+2)\leq810(x+3)^2(x^2+6x+12)
\end{equation*}
which is true for $x\leq481$ which evidently yields a stricter bound on $k$.

%%%%%%%%%%%%%%%%%%%%%%%%%%%%%%%%%%%%%%%%%%%%%%%%%%%%%%%%%%%%

\subsection{The case $3\leq\ell\leq x+3$}

%%%%%%%%%

\subsubsection{The subcase $\ell\equiv0\pmod{3}$}\label{sec:ellmodzero}

\par Recall $y=\lceil(1/2)((2/3)\ell+m)\rceil-1$ and set $\lambda:=(2/3)\ell+m-y$ so that $\lambda=y+2$ if $m$ is even and $\lambda=y+1$ if $m$ is odd.  We claim
\begin{equation}
(0,y)\oplus(3,y-2)\subset\left(0,\lambda\right)\otimes(\ell,m).\label{eq:g2biz1}
\end{equation}
The set $\overline{\Pi}(0,\lambda)$, illustrated by example in Figure \ref{fig:g21d}, is a hexagon with vertex $(0,-\lambda)$ and its five conjugates under the Weyl group.  In particular $\Pi(0,\lambda:\ell,m)$ contains $(0,y)$ and $(3,y-2)$.  To see this, $\Pi(0,\lambda:\ell,m)$ contains more generally all $(\ell-3i,m-\lambda+2i)$ for all $0\leq i\leq(1/3)\ell$.  The angles formed by $\overline{\Pi}(0,\lambda:\ell,m)$ and $T_1$ are 30 degrees and the angles formed by $\overline{\Pi}(0,\lambda:\ell,m)$ and $T_2$ are 60 degrees, implying there are no contributions from $\tau_1,\tau_2$.  The angles formed by $\overline{\Pi}(0,\lambda:\ell,m)$ and $T_0$ are 90 (or 30) degrees when they exist, but since $(0,m)$ does not lie on $T_0$ there is no contribution from $\tau_0$.  Lemma \ref{fusion} then implies containment (\ref{eq:g2biz1}).

\begin{figure}[H]
\centering
\begin{subfigure}{.5\textwidth}
  \centering
\begin{tikzpicture}[scale=0.12]
\draw[fill=black!5,thick] (0*1.414,4*2.4495) -- (6*1.414,2*2.4495) -- (6*1.414,-2*2.4495) -- (0*1.414,-4*2.4495) -- (-6*1.414,-2*2.4495) -- (-6*1.414,2*2.4495) -- cycle;
\foreach \x in {-12,-11,...,12} {
	\foreach \y in {-8,-7,...,7} {
        \node at (\x*1.414,\y*2.4495) {$\cdot$};
    		}
		};
\foreach \x in {-12,-11,...,11} {
	\foreach \y in {-8,-7,...,7} {
        \node at (\x*1.414+0.5*1.414,1.225+\y*2.4495) {$\cdot$};
    		}
		};
\draw[very thin] (-12*1.414,0*2.4495) -- (12*1.414,0*2.4495);
\draw[very thin] (0*1.414,7.5*2.4495) -- (0*1.414,-8*2.4495);
\draw[very thin] (-12*1.414,4*2.4495) -- (12*1.414,-4*2.4495);
\draw[very thin] (-12*1.414,-4*2.4495) -- (12*1.414,4*2.4495);
\draw[very thin] (7.5*1.414,7.5*2.4495) -- (-8*1.414,-8*2.4495);
\draw[very thin] (8*1.414,-8*2.4495) -- (-7.5*1.414,7.5*2.4495);
\draw[fill=white] (1.5*1.414,5.5*2.4495) circle (0.35);
\draw[fill=black] (0*1.414,-4*2.4495) circle (0.35);
\draw[fill=black] (-1.5*1.414,-3.5*2.4495) circle (0.35);
\end{tikzpicture}
  \caption{$\overline{\Pi}(0,4)$}
  \label{fig:g21a}
\end{subfigure}%
\begin{subfigure}{.5\textwidth}
  \centering
\begin{tikzpicture}[scale=0.12]
\draw[fill=black!5,thick] (0*1.414,4*2.4495) -- (6*1.414,2*2.4495) -- (6*1.414,-2*2.4495) -- (0*1.414,-4*2.4495) -- (-6*1.414,-2*2.4495) -- (-6*1.414,2*2.4495) -- cycle;
\draw[pattern=crosshatch,thick] (-2*1.414,2.5*2.4495) --  (4.5*1.414,2.5*2.4495) -- (6*1.414,2*2.4495) -- (6*1.414,1*2.4495) -- (1.5*1.414,-3.5*2.4495) -- (0*1.414,-4*2.4495) -- (-2*1.414,-3.33*2.4495)  -- cycle;
\draw[fill=black!25,thick] (-2*1.414,2.5*2.4495) -- (2*1.414,2.5*2.4495) -- (2*1.414,-2*2.4495) -- (-2*1.414,-3.33*2.4495) -- cycle;
\draw[fill=black!25,thick] (-2*1.414,2.5*2.4495) -- (4.5*1.414,2.5*2.4495)  -- (0*1.414,1*2.4495)-- (-2*1.414,1.66*2.4495) -- cycle;
\draw[fill=black!25,thick] (1.5*1.414,-3.5*2.4495) -- (1.5*1.414,-0.5*2.4495) -- (6*1.414,1*2.4495) -- cycle;
\foreach \x in {-12,-11,...,12} {
	\foreach \y in {-8,-7,...,7} {
        \node at (\x*1.414,\y*2.4495) {$\cdot$};
    		}
		};
\foreach \x in {-12,-11,...,11} {
	\foreach \y in {-8,-7,...,7} {
        \node at (\x*1.414+0.5*1.414,1.225+\y*2.4495) {$\cdot$};
    		}
		};
\draw[thick,dashed] (-3*1.414,-8*2.4495) -- node[circle,draw=black,pos = 0.85,inner sep=1pt,fill=white,solid,thin] {\tiny$T_1$}  (12*1.414,7*2.4495);
\draw[thick,dashed] (-12*1.414,2.5*2.4495) -- node[circle,draw=black,pos = 0.15,inner sep=1pt,fill=white,solid,thin] {\tiny$T_0$}  (12*1.414,2.5*2.4495);
\draw[thick,dashed] (-2*1.414,-8*2.4495) -- node[circle,draw=black,pos = 0.9,inner sep=1pt,fill=white,solid,thin] {\tiny$T_2$}  (-2*1.414,7.5*2.4495);
\draw[fill=white] (0*1.414,0*2.4495) circle (0.35);
\draw[fill=black] (0*1.414,-4*2.4495) circle (0.35);
\draw[fill=black] (-1.5*1.414,-3.5*2.4495) circle (0.35);
\end{tikzpicture}
  \caption{$\overline{\Pi}(0,4:3,4)$}
  \label{fig:g21b}
\end{subfigure}
\caption{$(0,4)\otimes(3,4)\in\mathcal{C}(\mathfrak{g}_2,15)$}
\label{fig:g21d}
\end{figure}
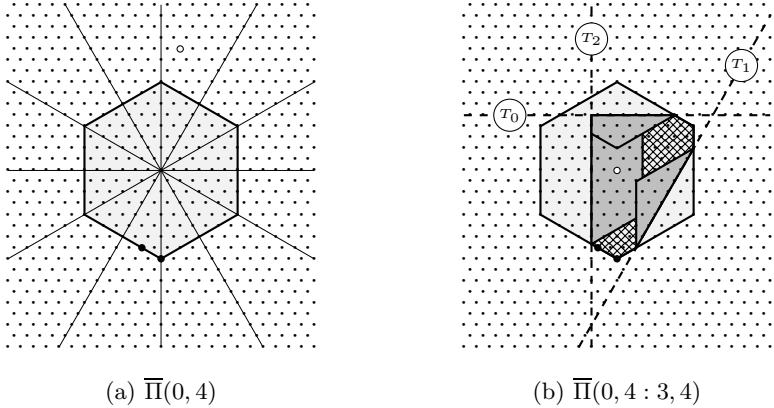

\par If $m$ is even, Corollary \ref{goal} applied to (\ref{eq:g2biz1}) gives
\begin{align}
&\dim'(0,y)+\dim'(3,y-2)\label{eq:g2biz} \\
&\leq[3y+9][3y+12][6y+21][3y+10][3y+11]\label{eq:g2biz2}\\
&\leq([3y+3]+6)([3y+6]+6)([6y+9]+18)([3y+4]+6)([3y+5]+6)\label{eq:g2biz3}
\end{align}
by applying Corollary \ref{quantumtrianglecor} to (\ref{eq:g2biz2}).  Expanding the product in (\ref{eq:g2biz3}) and subtracting the leading term (equal to $\dim'(0,y)$) yields
\begin{equation}
\dfrac{27}{8}(y-1)(y+3)(y+1)(3y+1)(3y+5)\leq1620(y^2+5y+8)(3y^2+15y+19)
\end{equation}
which is true for even $(2/3)\ell+m$ with $y\leq164$.  This bound implies $\ell+(3/2)m\leq495$.  From Lemma \ref{twistisone} we know $\theta(\ell,m)=1$ which implies $k\leq(1/3)(\ell^2+3\ell m+5\ell+3m^2+9m)-4$; and for $\ell+(3/2)m\leq495$ we have $k\leq109886$.  As in Sections \ref{g2emm2} and \ref{g2emm}, the case in which $\ell+(3/2)m$ is odd leads to a stricter bound on $k$ by this method.

%%%%%%%

\subsubsection{The subcase $\ell\equiv1\pmod{3}$}\label{subsubsub}

\par With $y=\lceil(1/2)((2/3)\ell+m)\rceil-1$, we set $\lambda:=(2/3)(\ell-1)+m-y$.  This implies $\lambda=y$ if $m$ is even and $\lambda=y+1$ if $m$ is odd.  We claim
\begin{equation}
(0,y)\oplus(3,y-2)\subset\left(1,\lambda\right)\otimes(\ell,m)\label{eq:g2dom1}
\end{equation}
and we omit the argument for this containment as it is identical to that of Section \ref{sec:ellmodzero}.

\par If $m$ is odd, Corollary \ref{goal} applied to (\ref{eq:g2dom1}) gives
\begin{align}
&\hspace{-4 mm}\dim'(0,y)+\dim'(3,y-2) \\
&\hspace{-4 mm}\leq[2][3y+6][3y+12][6y+18][3y+8][3y+10]\label{eq:g2dom2}\\
&\hspace{-4 mm}\leq[4]([3y-3]+9)([3y+9]+3)([6y+6]+12)([3y+1]+7)([3y+5]+5)\label{eq:g2dom3}
\end{align}
by applying Corollary \ref{quantumtrianglecor} to the right-hand side of (\ref{eq:g2dom2}).  Expanding the product on the right-hand side of (\ref{eq:g2dom3}) and subtracting the leading term (equal to $\dim'(0,y)$) yields
\begin{equation}
\dfrac{27}{8}(y-1)(y+3)(y+1)(3y+1)(3y+5)\leq648(y+3)(12y^3-20y^2-282y-425)
\end{equation}
which is true for $y\leq252$.  Hence we have $(2/3)\ell+m\leq1288$ and furthermore $\ell+(3/2)m\leq1933$.  The level $k$ is bounded under these constraints by $k\leq1664094$.   As in Sections \ref{g2emm2} and \ref{g2emm}, the case in which $m$ is even leads to a stricter bound on $k$ by this method.

%%%%%%%

\subsubsection{The subcase $\ell\equiv2\pmod{3}$}

\par  With $y=\lceil(1/2)((2/3)\ell+m)\rceil-1$, we set $\lambda:=(2/3)(\ell-2)+m-y$.  This implies $\lambda=y$ if $m$ is even and $\lambda=y-1$ if $m$ is odd.  We claim
\begin{equation}
(0,y)\oplus(3,y)\subset\left(2,\lambda\right)\otimes(\ell,m)\label{eq:g2doom1}
\end{equation}
and we omit the argument for this containment as it is identical to that of Section \ref{sec:ellmodzero}.

\par If $m$ is even, Corollary \ref{goal} applied to (\ref{eq:g2doom1}) gives
\begin{align}
&\dim'(0,y)+\dim'(3,y-2)\label{eq:g2doom} \\
&\leq[3][3y+3][3y+12][6y+15][3y+6][3y+9]\label{eq:g2doom2}\\
&\leq[4]([3y-3]+6)([3y+9]+3)([6y+6]+9)([3y+1]+5)([3y+5]+4)\label{eq:g2doom3}
\end{align}
by applying Corollary \ref{quantumtrianglecor} to (\ref{eq:g2dom2}).  Expanding the product in (\ref{eq:g2dom3}) and subtracting the leading term (which is equal to $\dim(0,x)$) yields
\begin{equation}
\dfrac{27}{8}(y-1)(y+3)(y+1)(3y+1)(3y+5)\leq540(y+3)(y+1)(27y^2+88y+74)
\end{equation}
which is true for $y\leq962$, hence $(2/3)\ell+m\leq1926$ and moreover $\ell+(3/2)m\leq2889$. This produces a bound of $k\leq3715250$.  As in Sections \ref{g2emm2} and \ref{g2emm}, the case in which $m$ is odd leads to a stricter bound on $k$ by this method.

%%%%%%%%%%%%%%%%%%%%%%%%%%%%%%%%%%%%%%%%%%%%%%%%%%%%%%

\subsection{The case $x+3<\ell$ and $m\neq0$}

\par We will employ a similar strategy to Section \ref{soo3}.  We first claim that if $x+3<\ell$, then for some $x+1\leq\lambda\leq x+3$, $(\lambda,0)\otimes(\ell,m)$ contains two summands $(s,t)$ such that $s+(3/2)t=x$, depending on the parity of $\ell$ and remainder of $m$ modulo 4.  We will provide proof of this claim in the most extreme case $\ell$ is even and $4\mid m$, using $\lambda=x+3$, leaving the other near identical cases to the reader (geometrically this fact should be evident).  The only changes in each case are due to the slight differences caused by the ceiling function in the definition of $x$.  Note that under our current assumptions $x=(1/2)\ell+(3/4)m-1$.
\par The set $\overline{\Pi}(\lambda,0)$, illustrated by example in Figure \ref{fig:g2222a}, is a hexagon with vertex $(-\lambda,0)$ and its five conjugates under the Weyl group.  In particular $\Pi(\lambda,0:\ell,m)$ contains $(\ell-\lambda-2,m+2)$ and $(\ell-\lambda+4,m-2)$.  The angles formed by $\overline{\Pi}(\lambda,0:\ell,m)$ and $T_1$ are 30 degrees and the angles formed by $\overline{\Pi}(\lambda,0:\ell,0)$ and $T_0,T_2$ are 60 degrees, ensuring there can be no contribution from $\tau_0,\tau_1,\tau_2$.   Lemma \ref{fusion} then implies the fusion coefficients $N^{(\ell-\lambda-2,m+2)}_{(\lambda,0),(\ell,m)}$ and $N^{(\ell-\lambda+4,m-2)}_{(\lambda,0),(\ell,m)}$ are nonzero as desired, provided $(\ell-\lambda+2,m-2)$ and $(\ell-\lambda-2,m+2)$ are in $\Lambda_0$ which is assured since $\ell>x+3$ and $m\geq4$ under our current assumptions.  It remains to note that since $\ell$ is even and $4\mid m$, then
\begin{align*}
(\ell-\lambda+4)+\dfrac{3}{2}(m-2)&=2\left(\dfrac{1}{2}\ell+\dfrac{3}{4}m-1\right)+3-\lambda \\
							&=2x+3-(x+3)=x
\end{align*}
as desired.  Similarly it can be checked $(\ell-\lambda-2)+(3/2)(m+2)=x$ as well.

\begin{figure}[H]
\centering
\begin{subfigure}{.5\textwidth}
  \centering
\begin{tikzpicture}[scale=0.12]
\draw[fill=black!5,thick] (6*1.414,6*2.4495) --  (12*1.414,0*2.4495) -- (6*1.414,-6*2.4495) -- (-6*1.414,-6*2.4495) -- (-12*1.414,0*2.4495) -- (-6*1.414,6*2.4495) -- cycle;
\foreach \x in {-13,-12,...,13} {
	\foreach \y in {-8,-7,...,11} {
        \node at (\x*1.414,\y*2.4495) {$\cdot$};
    		}
		};
\foreach \x in {-13,-12,...,12} {
	\foreach \y in {-8,-7,...,11} {
        \node at (\x*1.414+0.5*1.414,1.225+\y*2.4495) {$\cdot$};
    		}
		};
\draw[very thin] (-13*1.414,0*2.4495) -- (13*1.414,0*2.4495);
\draw[very thin] (0*1.414,11.5*2.4495) -- (0*1.414,-8*2.4495);
\draw[very thin] (-13*1.414,4.333*2.4495) -- (13*1.414,-4.333*2.4495);
\draw[very thin] (-13*1.414,-4.333*2.4495) -- (13*1.414,4.333*2.4495);
\draw[very thin] (11.5*1.414,11.5*2.4495) -- (-8*1.414,-8*2.4495);
\draw[very thin] (8*1.414,-8*2.4495) -- (-11.5*1.414,11.5*2.4495);
\draw[fill=white] (7*1.414,11*2.4495) circle (0.35);
\draw[fill=black] (-4*1.414,-6*2.4495) circle (0.35);
\draw[fill=black] (-7*1.414,-5*2.4495) circle (0.35);
\end{tikzpicture}
  \caption{$\overline{\Pi}(12,0)$}
  \label{fig:g2222a}
\end{subfigure}%
\begin{subfigure}{.5\textwidth}
  \centering
\begin{tikzpicture}[scale=0.12]
\draw[fill=black!5,thick] (6*1.414,6*2.4495) --  (12*1.414,0*2.4495) -- (6*1.414,-6*2.4495) -- (-6*1.414,-6*2.4495) -- (-12*1.414,0*2.4495) -- (-6*1.414,6*2.4495) -- cycle;
\draw[pattern=crosshatch,thick] (-7.5*1.414,4.5*2.4495) -- (-7*1.414,5*2.4495) -- (7*1.414,5*2.4495) -- (8.5*1.414,3.5*2.4495)  -- (-1*1.414,-6*2.4495) -- (-6*1.414,-6*2.4495)  -- (-7.5*1.414,-4.5*2.4495)-- cycle;
\draw[fill=black!25,thick] (-7.5*1.414,4.5*2.4495) -- (-3*1.414,0*2.4495) -- (-7.5*1.414,-4.5*2.4495)  -- cycle;
\draw[fill=black!25,thick] (-7*1.414,5*2.4495) -- (7*1.414,5*2.4495) -- (6*1.414,4*2.4495) -- (-6*1.414,4*2.4495) -- cycle;
\draw[fill=black!25,thick] (8.5*1.414,3.5*2.4495) -- (1.5*1.414,3.5*2.4495) -- (-4.5*1.414,-2.5*2.4495) -- (-1*1.414,-6*2.4495) -- cycle;
\foreach \x in {-13,-12,...,13} {
	\foreach \y in {-13,-12,...,6} {
        \node at (\x*1.414,\y*2.4495) {$\cdot$};
    		}
		};
\foreach \x in {-13,-12,...,12} {
	\foreach \y in {-13,-12,...,6} {
        \node at (\x*1.414+0.5*1.414,1.225+\y*2.4495) {$\cdot$};
    		}
		};
\draw[thick,dashed] (-7.5*1.414,-13*2.4495) -- node[circle,draw=black,pos = 0.2,inner sep=1pt,fill=white,solid,thin] {\tiny$T_2$}  (-7.5*1.414,6.5*2.4495);
\draw[thick,dashed] (-12*1.414,5*2.4495) -- node[circle,draw=black,pos = 0.1,inner sep=1pt,fill=white,solid,thin] {\tiny$T_0$} (12.5*1.414,5*2.4495);
\draw[thick,dashed] (-8*1.414,-13*2.4495) --node[circle,draw=black,pos = 0.2,inner sep=1pt,fill=white,solid,thin] {\tiny$T_1$}  (11.5*1.414,6.5*2.4495);
\draw[fill=white] (0*1.414,0*2.4495) circle (0.35);
\draw[fill=black] (-4*1.414,-6*2.4495) circle (0.35);
\draw[fill=black] (-7*1.414,-5*2.4495) circle (0.35);
\end{tikzpicture}
  \caption{$\overline{\Pi}(12,0:14,4)$}
  \label{fig:g2222b}
\end{subfigure}
\caption{$(12,0)\otimes(14,4)\in\mathcal{C}(\mathfrak{g}_2,24)$}
\label{fig:g2222}
\end{figure}
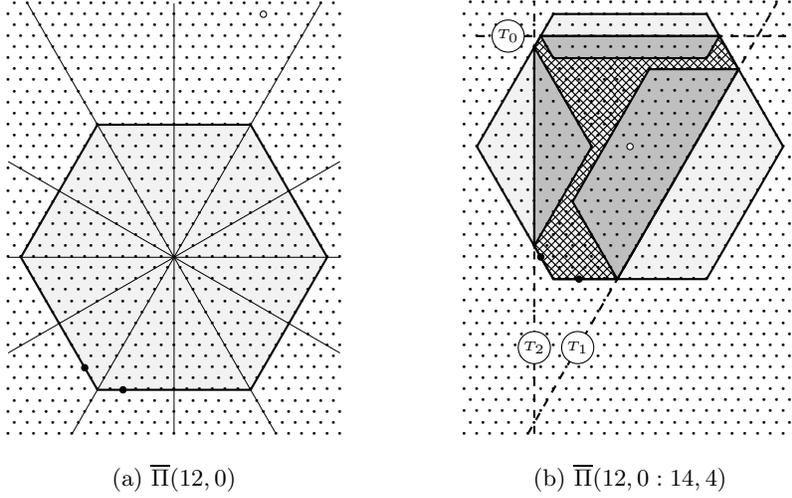

\begin{lem}\label{g2convex}
If $0\leq x<k/2$, $\dim'(x,0)\leq\dim'(s,t)$ for all $s,t\in\mathbb{Z}_{\geq0}$ such that $s+(3/2)t=x$.
\end{lem}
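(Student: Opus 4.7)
The plan is to mimic Lemma \ref{so5convex}: parametrize the constrained weights by a single real variable, express $\dim'(s,t)$ via sines, establish concavity of the resulting function, and verify an endpoint comparison so that concavity forces the claim.

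Write $s = x - 3u$ and $t = 2u$, so nonnegative real pairs with $s + (3/2)t = x$ correspond to $u \in [0, x/3]$, with $u = 0$ giving $(x, 0)$. A direct substitution into the $\mathfrak{g}_2$ dimension formula shows that $2s+3t+5 = 2x+5$ is constant in $u$, and that the remaining five factors group into two pairs whose sums $2x+5$ and $6x+15$ are likewise $u$-independent. Setting $\kappa := \pi/(3(k+4))$ and applying the identity $2\sin^2(\kappa)[n_1][n_2] = \cos((n_1-n_2)\kappa) - \cos((n_1+n_2)\kappa)$ with $w := (6u+3)\kappa$, $a := (2x+5)\kappa$, $b := 3a$, gives the closed form
\[
h(u) := \dim'(s,t) = \frac{\sin a}{4\sin^5\kappa}\,(\cos w - \cos a)(\cos w - \cos b)\sin w.
\]
The constraint $x < k/2$ keeps $w, a, b$ inside the ranges needed for $h$ to be positive and smooth on $[0, x/3]$.

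Expanding $F(w) := (\cos w - \cos a)(\cos w - \cos b)\sin w$ using $\sin w \cos^2 w = \tfrac{1}{4}(\sin w + \sin 3w)$ and $\sin w \cos w = \tfrac{1}{2}\sin 2w$ rewrites $F$ as $(\tfrac{1}{4} + \cos a \cos b)\sin w - \tfrac{1}{2}(\cos a + \cos b)\sin 2w + \tfrac{1}{4}\sin 3w$; two differentiations and the chain rule $dw/du = 6\kappa$, combined with $b = 3a$ (so $\cos a + \cos b = 2\cos(2a)\cos a$ and $\cos a \cos b = \tfrac{1}{2}(\cos 2a + \cos 4a)$), yields an expression for $h''(u)$ whose sign can be controlled on $w \in [3\kappa, (2x+3)\kappa]$. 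Verifying $h''(u) < 0$ there makes $h$ concave. For the endpoint comparison, verify $h(0) \leq h(x/3)$, i.e., $\dim'(x, 0) \leq \dim'(0, 2x/3)$; after cancelling the common factor $[2x+5]$, this reduces to a product inequality among $q$-analogs that can be attacked by pairing the $[2n]/[2]$ and $[3n]/[3]$ ratios (as at the close of Lemma \ref{so5convex}) and invoking monotonicity of $[n]$ on the appropriate interval. Concavity of $h$ plus $h(0) \leq h(x/3)$ then forces $h(u) \geq h(0)$ for every $u \in [0, x/3]$ via the chord inequality $h(u) \geq \tfrac{x/3 - u}{x/3}h(0) + \tfrac{u}{x/3}h(x/3)$, and restricting to integer pairs gives the lemma.

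The main obstacle will be the concavity step: unlike Lemma \ref{so5convex}, where the dimension split cleanly as a positive constant times a product of two tractable factors, here all three $u$-dependent factors interact and the sign of $\cos b = \cos 3a$ can change as $x$ varies. Keeping $F''(w)$ strictly negative throughout $[3\kappa, (2x+3)\kappa]$ will require careful bookkeeping of the three trigonometric terms, tied to the bound $x < k/2$ which is precisely what keeps $a$ small enough that $\cos 3a$ remains under control.
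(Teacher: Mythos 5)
Your proposal mirrors the paper's proof: you parametrize the line $s + (3/2)t = x$ by a single scalar, use the product-to-sum identity to pair the six numerator factors into a constant factor $[2x+5]$, the factor $[3(t+1)] = \sin w/\sin\kappa$, and two quadratic-in-$\cos w$ factors (which are the paper's $g$ and $h$, sharing the same derivative because the sums $2x+5$ and $6x+15$ are constant along the line), then argue concavity plus an endpoint comparison and conclude by the chord inequality. Your parametrization $s = x-3u$, $t=2u$, the identification $w = (6u+3)\kappa$, $a=(2x+5)\kappa$, $b=3a$, and the expansion of $F(w)$ into $\sin w$, $\sin 2w$, $\sin 3w$ terms are all correct, as is the observation $\dim'(x,0)\leq\dim'(0,2x/3)$ as the endpoint check (the paper writes $\dim'(0,x)$ here, an apparent typo for $\dim'(0,(2/3)x)$). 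The structure of the argument is the same; you have merely expanded the triple product where the paper keeps it factored.

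Your closing remark that the concavity step is harder here than in Lemma~\ref{so5convex} is correct and worth flagging, because the paper's appeal to ``the product rule'' glosses over it. In Lemma~\ref{so5convex} the two factors had derivative signs aligned so that every term of $(fg)'' = f''g + 2f'g' + fg''$ was manifestly negative. Here $f'>0$ while $g',h'<0$, so the cross-term $2fg'h'$ in $(fgh)''$ is \emph{positive}, and naive term-by-term sign-checking fails. Collecting the expansion one finds $(fgh)'' \propto -\sin w\cdot\bigl(9\cos^2 w - 4(\cos a + \cos 3a)\cos w + \cos a\cos 3a - 2\bigr)$, so concavity is equivalent to positivity of the quadratic $Q(c) := 9c^2 - 4(\cos a + \cos 3a)c + \cos a\cos 3a - 2$ on the range $c\in[\cos((2x+3)\kappa),\cos(3\kappa)]$. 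This does hold under $x < k/2$ (which forces $a<\pi/3$ and hence $c > \cos a > 1/2$, pinning $Q$ to its positive branch), but neither your write-up nor the paper actually verifies it; both of you assert it. So your proposal faithfully reproduces the paper's argument including its one genuinely unproved assertion, and your candid identification of that gap is the main content beyond what the paper itself records.
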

\begin{proof}
With $\kappa:=\pi/(3(k+4))$, define
\begin{align*}
f(t)&=\sin((3(t+1))\kappa) \\
g(t)&=\sin((x-(3/2)t+1)\kappa)\sin((x+(3/2)t+4)\kappa) \\
h(t)&=\sin((3x-(3/2)t+6)\kappa)\sin((3x+(3/2)t+9)\kappa)
\end{align*}
as real functions of $t\in[0,(2/3)x]$ so that
\begin{equation*}\dim'(x-(3/2)t,t)=\sin^{-6}(\kappa)\sin((2x+5)\kappa)f(t)g(t)h(t).\end{equation*}
Now we compute with $\alpha=x-(3/2)t+1$ and $\beta=x+(3/2)t+4$ for brevity,
\begin{align*}
&&g'(t)&=(3/2)\kappa(\cos(\beta\kappa)\sin(\alpha\kappa)-\sin(\beta\kappa)\cos(\alpha\kappa)) \\
&&&=-(3/2)\kappa\sin(3(t+1))\kappa) \\
\Rightarrow&&g''(t)&=-(9/2)\kappa^2\cos(3(t+1))\kappa).
\end{align*}
The derivatives of $g$ and $h$ coincide while the derivatives of $f$ are trivially computed, which (as in the proof of Lemma \ref{so5convex}) allows verification that $(d^2/dt^2)\dim'(x-(3/2)t,t)<0$ for $t\in[0,(2/3)x]$ using the product rule.  It then suffices to note $\dim'(x,0)\leq\dim'(0,x)$ to complete the proof.
\end{proof}

\par Lemma \ref{g2convex} along with Corollary \ref{goal} then implies
\begin{align}
&\dim'(x,0)+\dim'(x,0)\label{eq:g2eagles} \\
&\leq[x+4][3][3x+15][3x+18][x+7][2x+11]\label{eq:g2eagles2}\\
&<([x+1]+3)[3]([3x+6]+9)([3x+9]+9)([x+4]+3)([2x+5]+6)\label{eq:g2eagles3}
\end{align}
by applying Corollary \ref{quantumtrianglecor} to (\ref{eq:g2eagles2}).  Expanding the product in (\ref{eq:g2eagles3}) and subtracting the leading term (equal to $\dim'(x,0)$) from both sides of this equality yields
\begin{equation}
\dfrac{27}{64}(x+1)(x+2)(x+3)(x+4)(2x+5)<810(x+4)^2(x^2+8x+19)\label{g2eagles}
\end{equation}
by applying Lemma \ref{lowerquantumbound} to the factors on the left-hand side of (\ref{g2eagles}) and Corollary \ref{quantumtrianglecor} to the factors on the right-hand side of (\ref{g2eagles}).  The inequality in (\ref{g2eagles}) is true for $x\leq963$.  Moreover $\ell\leq1926$ and $m\leq963$, therefore $k\leq(1/3)(\ell^2+3\ell m+5\ell+3m^2+9m)-4$ is maximized within these bounds at $k\leq4023089$.

%%%%%%%%%%%%%%%%%%%%%%%%%%%%%%%%%%%%%%%%%%%%%%%%%%%%%%%%%%%%%%%%%%%%%%%%%%%%%%%%%%%%%%%%%%%%%%%%%%%

%%%%%%%%%%%%%%%%%%%%%%%%%%%%%%%%%%%%%%%%%%%%%%%%%%%%%%%%%%%%%%%%%%%%%%%%%%%%%%%%%%%%%%%%%%%%%%%%%%%

%%%%%%%%%%%%%%%%%%%%%%%%%%%%%%%%%%%%%%%%%%%%%%%%%%%%%%%%%%%%%%%%%%%%%%%%%%%%%%%%%%%%%%%%%%%%%%%%%%%

\section{Summary and further directions}\label{conclusion}

\par The level bounds presented for exceptional connected \'etale algebras in $\mathcal{C}(\mathfrak{so}_5,k)$ ($\approx2\times10^4$) and $\mathcal{C}(\mathfrak{g}_2,k)$ ($\approx2\times10^7$) should be sufficient in these cases to classify all connected \'etale algebras by modern computational methods (attributed to Gannon \cite[Section 1.5]{ocneanu}).  On the other hand these bounds are astronomical when compared to the highest level at which known exceptional connected \'etale algebras exist: $\mathcal{C}(\mathfrak{so}_5,12)$ and $\mathcal{C}(\mathfrak{g}_2,4)$.  The quantum inequalities found in Section \ref{sec:inequalities} are the main culprits for this disparity.  Coarse inequalities such as these are unavoidable in practice, as seen in Section \ref{soo3}, arguments relying on the definition of $[n]$ are necessarily verbose.  As the number of positive roots of $\mathfrak{g}$ increases, or equivalently the number of quantum factors in $\dim(\lambda)$ for $\lambda\in\Lambda_0$, arguments become infeasibly lengthy, and the level bounds given by the inequalities in Section \ref{sec:inequalities} will certainly grow outside of computational feasibility for a complete classification.  Producing tighter quantum inequalities could help curb both the length of arguments for level-finiteness for higher ranks and the growth rate of the level-bounds produced by these arguments.

\par Nonetheless the presented core argument for level-finiteness of exceptional connected \'etale algebras should be generalizable to all simple finite-dimensional complex Lie algebras $\mathfrak{g}$.  Given a connected \'etale algebra in $\mathcal{C}(\mathfrak{g},k)$ with a minimal (in an appropriately chosen sense) nontrivial summand $\gamma$, one should produce $\lambda,\mu_1,\mu_2$ such that $\mu_1\oplus\mu_2\subset\lambda\otimes\gamma$ and $\|\lambda-\mu_i\|$ is bounded by some fixed constant independent of $\gamma$ and $k$.  In rank 2 the choice of $\lambda,\mu_1,\mu_2$ was clear due to the geometric interpretation of the quantum Racah formula; higher rank Lie algebras may require an argument of mere existence due to the complexity of fusion rules, in which case the discovered level bounds will become even less tractable than those in this paper.

\bibliography{bib}
\bibliographystyle{plain}

\end{document}